\newlength{\hatchspread}
\newlength{\hatchthickness}
\newlength{\hatchshift}
\newcommand{\hatchcolor}{}
\tikzset{hatchspread/.code={\setlength{\hatchspread}{#1}},
         hatchthickness/.code={\setlength{\hatchthickness}{#1}},
         hatchshift/.code={\setlength{\hatchshift}{#1}},
         hatchcolor/.code={\renewcommand{\hatchcolor}{#1}}}
\tikzset{hatchspread=3pt,
         hatchthickness=0.4pt,
         hatchshift=0pt,
         hatchcolor=black}
\newcommand{\Alpha}{\mathrm{A}}
\newcommand{\Kappa}{\mathrm{K}}
\newcommand{\rep}[1]{\hat #1}	
\NewDocumentCommand{\makeabbrev}{mmm}
 {
  \yoruk_makeabbrev:nnn { #1 } { #2 } { #3 }
 }
\makeabbrev{\textbf}{tbf#1}{a,b,c,d,e,f,g,h,i,j,k,l,m,n,o,p,q,r,s,t,u,v,w,x,y,z,A,B,C,D,E,F,G,H,I,J,K,L,M,N,O,P,Q,R,S,T,U,V,W,X,Y,Z}
\makeabbrev{\textbf}{bf#1}{a,b,c,d,e,f,g,h,i,j,k,l,m,n,o,p,q,r,s,t,u,v,w,x,y,z,A,B,C,D,E,F,G,H,I,J,K,L,M,N,O,P,Q,R,S,T,U,V,W,X,Y,Z}
\makeabbrev{\textsf}{tsf#1}{a,b,c,d,e,f,g,h,i,j,k,l,m,n,o,p,q,r,s,t,u,v,w,x,y,z,A,B,C,D,E,F,G,H,I,J,K,L,M,N,O,P,Q,R,S,T,U,V,W,X,Y,Z}
\makeabbrev{\mathsf}{mss#1}{a,b,c,d,e,f,g,h,i,j,k,l,m,n,o,p,q,r,s,t,u,v,w,x,y,z,A,B,C,D,E,F,G,H,I,J,K,L,M,N,O,P,Q,R,S,T,U,V,W,X,Y,Z}
\makeabbrev{\mathfrak}{mf#1}{a,b,c,d,e,f,g,h,i,j,k,l,m,n,o,p,q,r,s,t,u,v,w,x,y,z,A,B,C,D,E,F,G,H,I,J,K,L,M,N,O,P,Q,R,S,T,U,V,W,X,Y,Z}
\makeabbrev{\mathrm}{mrm#1}{a,b,c,d,e,f,g,h,i,j,k,l,m,n,o,p,q,r,s,t,u,v,w,x,y,z,A,B,C,D,E,F,G,H,I,J,K,L,M,N,O,P,Q,R,S,T,U,V,W,X,Y,Z}
\makeabbrev{\mathbf}{mbf#1}{a,b,c,d,e,f,g,h,i,j,k,l,m,n,o,p,q,r,s,t,u,v,w,x,y,z,A,B,C,D,E,F,G,H,I,J,K,L,M,N,O,P,Q,R,S,T,U,V,W,X,Y,Z}
\makeabbrev{\mathcal}{mc#1}{A,B,C,D,E,F,G,H,I,J,K,L,M,N,O,P,Q,R,S,T,U,V,W,X,Y,Z}
\makeabbrev{\mathbb}{mbb#1}{A,B,C,D,E,F,G,H,I,J,K,L,M,N,O,P,Q,R,S,T,U,V,W,X,Y,Z}
\makeabbrev{\mathscr}{ms#1}{A,B,C,D,E,F,G,H,I,J,K,L,M,N,O,P,Q,R,S,T,U,V,W,X,Y,Z}
\makeabbrev{\mathrm}{#1}{
Id,id,ran,rk,diag,stab,ann,conv,pr,ev,tr,End,Hom,sgn,im,op,can,fin,ext,red,tot,
%
rot,usc,lsc,Lip,LocLip,lip,bSymLip,osc,AC,loc,uloc,spec,coz,z,
%
supp,Opt,Adm,Cpl,Geo,GeoSel,GeoOpt,GeoAdm,GeoCpl,reg,
%
bd,co,Ric,Exp,dExp,dist,seg,Seg,cut,fcut,Cut,SDiff,Iso,Isom,diam,cl,Homeo,Diff,Der,vol,dvol,inj,relint, Graph, sub,codim,
%
var,law,Var,Poi,Gam,pa,,iso,fs,inv,pqi,mix,
TestF,
}
\makeabbrev{\mathsf}{#1}{DP,CD,BE,MCP,Ent,wMTW,MTW,RCD,ncRCD,QCD,EVI,Irr,IH,SC,wFe,VA,UP,Curv,Alex,CAT}
\newcommand{\bLip}{\mathrm{Lip}_b}
\newcommand{\T}{\tau} 
\renewcommand{\complement}{\mathrm{c}}
\newcommand{\emparg}{{\,\cdot\,}}
\newcommand{\Ch}{\mathsf{Ch}}
\newcommand{\dom}[1]{\mathcal D(#1)}
\newcommand{\sem}[1]{\{#1\}_{t \ge 0}}
\newcommand{\comm}{\,\,\mathrm{,}\;\,}
\DeclareMathOperator{\eqdef}{\coloneqq}
\let\epsilon\varepsilon
\let\temp\phi
\let\phi\varphi
\let\varphi\temp
\newcommand{\longrar}{\longrightarrow}
\newcommand{\rar}{\rightarrow}
\newcommand{\diff}{\mathop{}\!\mathrm{d}}
\newcommand{\tabs}[1]{\big\lvert#1\big\rvert}	
\newcommand{\norm}[1]{\left\lVert#1\right\rVert}					
\newcommand{\set}[1]{\left\{#1\right\}}							
\newcommand{\paren}[1]{\left(#1\right)}							
\newcommand{\tparen}[1]{\big({#1}\big)}
\newcommand{\ttonde}[1]{\big({#1}\big)}
\newcommand{\quadre}[1]{\left[#1\right]}							
\newcommand{\class}[2][]{\left[#2\right]_{#1}}						
\newcommand{\tclass}[2][]{\big [#2\big]_{#1}}						
\newcommand{\tym}[1]{{\scriptscriptstyle{\times #1}}}
\newcommand{\otym}[1]{{\scriptscriptstyle{\otimes #1}}}
\newcommand{\osym}[1]{{\scriptscriptstyle{\odot #1}}}
\DeclareSymbolFont{symbolsC}{U}{pxsyc}{m}{n}
\DeclareMathSymbol{\medcirc}{\mathbin}{symbolsC}{7}
\DeclareSymbolFont{symbolsZ}{OMS}{pxsy}{m}{n}
\newcommand{\seq}[1]{\paren{#1}}								
\newcommand{\Meas}{\mathscr M}
\newcommand{\pfwd}{\sharp}
\DeclareMathOperator*{\esssup}{ess\text-sup}
\DeclareMathOperator{\car}{\mathbf 1}
\DeclareMathOperator{\emp}{\varnothing} 
\newcommand{\N}{{\mathbb N}}
\newcommand{\EN}{\overline{\N}}
\newcommand{\R}{{\mathbb R}}
\DeclareMathOperator{\Z}{{\mathbb Z}}
\newcommand{\mrestr}[1]{\!\downharpoonright_{#1}}
\tikzset{cross/.style={cross out, draw=black, minimum size=2*(#1-\pgflinewidth), inner sep=0pt, outer sep=0pt},
cross/.default={4pt}}
\newcommand{\iref}[1]{\ref{#1}}
\newcommand{\comma}{\,\,\mathrm{,}\;\,}
\newcommand{\fstop}{\,\,\mathrm{.}}
\newcommand{\cdc}{\Gamma}
\newcommand{\cpl}{q}
\newcommand{\QP}{{\mu}}
\newcommand{\e}{\varepsilon}
\newcommand{\dUpsilon}{{\mathbf \Upsilon}}
\newcommand{\U}{\dUpsilon}
\newcommand{\sine}{\mathsf{sine}}
\newcommand{\E}{\mathcal E}
\renewcommand{\1}{\mathbf 1}
\numberwithin{equation}{section}
\theoremstyle{plain}
\newtheorem{thm}{Theorem}[section]
\newtheorem*{thm*}{Theorem}
\newtheorem*{mthm*}{Main Theorem}
\newtheorem{prop}[thm]{Proposition}
\newtheorem{lem}[thm]{Lemma}
\newtheorem{cor}[thm]{Corollary}
\newtheorem*{cor*}{Corollary}
\theoremstyle{remark}
\newtheorem{defs}[thm]{Definition}
\newtheorem*{defs*}{Definition}
\theoremstyle{remark}
\newtheorem{rem}[thm]{Remark}
\newtheorem{ese}[thm]{Example}
\newtheorem{ass}[thm]{Assumption}
\newcommand{\proj}{{\sf proj}}
\renewcommand{\paragraph}[1]{\medskip\emph{#1}.}
\newcommand{\quot}{{\sf P}}
\newcommand{\CBE}{\mathrm{C}\beta\mathrm{E}}
\newcommand{\cquad}{\comma\quad}
\newcommand{\Fis}{\mathsf{F}}
\newcommand{\scolon}{\ ;}
\begin{document}
\title[Curvature bound of Dyson Brownian Motion]{Curvature Bound of Dyson Brownian Motion}

\author[K.~Suzuki]{Kohei Suzuki}
\address{Department of Mathematical Science, Durham University, Science Laboratories, South Road, DH1 3LE, United Kingdom}
\address{Theoretical Sciences Visiting Program, Okinawa Institute of Science and Technology Graduate University, Okinawa, 904-0495, Japan}
\thanks{\hspace{-5.5mm} Department of Mathematical Science, Durham University, United Kingdom / Okinawa Institute of Science and Technology Graduate University, Japan
\\
\hspace{2.0mm} E-mail: kohei.suzuki@durham.ac.uk
}

\keywords{\vspace{2mm} Dyson Brownian motion, log-gas, Ricci curvature bound}

\subjclass[2020]{Primary 60K35, Secondary 31C25}

\begin{abstract}
We construct a strongly local symmetric Dirichlet form on the configuration space~$\U$ whose symmetrising (thus also invariant) measure is $\sine_\beta$, which is the law of the sine $\beta$ ensemble for every $\beta>0$. 
For every $\beta>0$, this Dirichlet form satisfies the Bakry--\'Emery gradient estimate~$\BE(K, \infty)$ with $K=0$. This implies various functional inequalities, including the local Poincar\'e inequality, the local log--Sobolev inequality and the local hyper-contractivity. We then introduce an $L^2$-transportation-type extended distance $\bar\mssd_\U$ on $\U$, and  prove the dimension-free Harnack inequality and several Lipschitz regularisation estimates of the $L^2$-semigroup associated with the Dirichlet form in terms of $\bar\mssd_{\U}$.  As a result of $\BE(0,\infty)$, we obtain that  the dual semigroup on the space of probability measures over $\U$, endowed with a Benamou--Brenier-like extended distance $\mssW_{\E}$, satisfies the evolutional variation inequality with respect to the Bolzmann--Shannon entropy $\Ent_{\sine_\beta}$ associated with~$\sine_\beta$. 
Furthermore,  the dual semigroup is characterised as the unique $\mssW_{\E}$-gradient flow in the space of probability measures with respect to $\Ent_{\sine_\beta}$. These results provide quantitative estimates of the transition semigroup of the unlabelled infinite Dyson Brownian motion (DBM) with the inverse temperature $\beta$, and give a new perspective regarding the DBM as the $\mssW_{\E}$-gradient flow of the Bolzmann--Shannon entropy.
Finally, we provide a sufficient condition for $\BE(K, \infty)$ beyond $\sine_\beta$ and apply it to the infinite particle diffusion whose symmetrising measure is the law of the $1$-dimensional $(\beta,s)$-circular Riesz gas with $\beta>0$ and $0<s<1$. 
\end{abstract}

\maketitle

%

\setcounter{tocdepth}{1}
\makeatletter
\def\l@subsection{\@tocline{2}{0pt}{2.5pc}{5pc}{}}
\def\l@subsubsection{\@tocline{3}{0pt}{4.75pc}{5pc}{}}
\makeatother

\tableofcontents

\section{Introduction} \label{s:Int}

\paragraph{Infinite Dyson Brownian motion}
The interacting particle system mainly studied in this article can be formally described as the following  stochastic differential equation of infinitely many particles in $\R$:
\begin{align}  \label{d:DBMS}
\diff X_t^k=    \frac{\beta}{2} \lim_{r \to \infty}\sum_{\substack{i: i \neq k\\ |X_t^k- X_t^i|<r}} \frac{1}{X_t^k- X_t^i} \diff t + \diff B^k_t \cquad k \in \N \comma 
\end{align}
 where $(B_t^k: t\ge 0, k \in \N)$ is the family of infinitely many independent Brownian motions on~$\R$. The solution $\mathbb X_t = (X_t)_{k \in \N}$ to \eqref{d:DBMS} is called {\it infinite Dyson Brownian motion with inverse temperature $\beta>0$}, which is of  particular importance in relation to the random matrix theory. Over the last thirty-five years, the existence and the uniqueness of strong and weak solutions to~\eqref{d:DBMS} have been intensively studied, e.g., in~\cite{Dys62, Spo87, NagFor98, KatTan10, Osa96, Osa12, Osa13, Tsa16, OsaTan20, KawOsaTan22}.  In particular, the existence and the pathwise uniqueness of the strong solution to~\eqref{d:DBMS} have been proven with a suitable choice of initial conditions in~\cite{OsaTan20, KawOsaTan22} for $\beta=1,2,4$ and  in~\cite{Tsa16} for $\beta \ge 1$.
By mapping the solution~$\mathbb X_t$ via $(x_i)_{i \in \N} \mapsto \sum_{i=1}^\infty \delta_{x_i}$, it can be thought of as a diffusion process (i.e., a continuous-time strong Markov process with continuous trajectories) on {\it the configuration spcae~$\U=\U(\R)$} over $\R$ (i.e., the space of locally finite point measures on~$\R$) {endowed with the vague topology~$\tau_\mrmv$ (i.e., the topology induced by the duality of compactly supported continuous functions in $\R$).
 This diffusion process on~$\U$ is called {\it unlabelled solution} to~\eqref{d:DBMS} and denoted by $\mathsf X_t$. For~$\beta=1,2,4$, the solution~$\mathsf X_t$ has been identified with the diffusion process  associated with a particular Dirichlet form whose symmetrising measure~$\QP$ is the law of the sine $\beta$ ensemble, see~\cite[Thm.~24]{Osa12} and \cite[\S8]{Tsa16}.
 }

 \smallskip

\paragraph{Sine $\beta$ ensemble} 
Let $\beta>0$ and $\CBE_k$ be the law of {\it the circular $\beta$ ensemble} of~$k$-particle, which is a probability measure defined as 
$$\frac{1}{Z_{k, \beta}}\prod_{1 \le j<l \le k}\bigl| e^{i\theta_j}- e^{i\theta_l}\bigr|^\beta\diff \theta_1 \cdots\diff \theta_k\cquad Z_{k, \beta}=(2\pi)^k\frac{\Gamma\Bigl(\frac{\beta}{2}k+1\Bigr)}{\Gamma\Bigl(\frac{\beta}{2}+1\Bigr)^k}\comma $$
where $\diff \theta_i$ is the Lebesgue measure on $[-\pi, \pi]$. Let $\mathbb P_{k, \beta}$ be the push-forward measure of~$\CBE_k$ by the composition of the scaling map~$(\theta_1,\ldots, \theta_k) \to (\frac{k\theta_1}{2\pi}, \ldots, \frac{k\theta_k}{2\pi})$ and the symmetric quotient~$(\theta_1,\ldots, \theta_k) \mapsto \sum_{i=1}^k \delta_{\theta_i}$, which is a probability measure on $\U^k\bigl([-\frac{k}{2}, \frac{k}{2}]\bigr)$.
According to \cite[Dfn.\ 1.6]{KilSto09}, the law $\CBE$ of the {\it circular $\beta$ ensemble}  is defined as the weak limit~$\mathbb P_{\beta}$ of $\mathbb P_{k, \beta}$ with $k \to \infty$, which is a Borel probability measure on $\U(\R)$.
In \cite{ValVir09}, the Borel probability measure $\sine_\beta$ on $\U(\R)$ called {\it the law of the sine $\beta$ ensemble} (or {\it the sine $\beta$ point process}) was constructed by the limit of the laws of  the scaled Gaussian $\beta$-ensembles. These two measures $\mathbb P_\beta$ and $\sine_\beta$ are identical for every~$\beta>0$, see~\cite{Nak14}. When $\beta=1, 2,4$, $\sine_\beta$ was constructed and studied as determinantal or Pfaffian point processes before these works, see, e.g., \cite{Meh04}. In the rest of the article, we use the notation~$\sine_\beta$ instead of $\mathbb P_{\beta}$.

\paragraph{Bakry--\'Emery curvature bound}
In the seminal paper~\cite{BakEme85}, it was discovered that a complete Riemannian manifold~$(M, g)$ has a Ricci curvature lower bound by a constant  $K \in \R$, i.e., 
\begin{align}\label{e:RLB}
{\rm Ric}_x(v, v) \ge Kg_x(v, v) \cquad x \in M\cquad v \in T_xM 
\end{align} 
 if and only if {the following {\it $\cdc_2$-criterion} holds: for every compactly supported smooth function $u \in C_0^\infty(M)$
$$\cdc_2(u) \ge K\cdc(u) \comma$$
where $\cdc(u,v):=\langle \nabla u, \nabla v\rangle$ is the square gradient operator and $\cdc_2(u, v):=\frac{1}{2}(\Delta \cdc(u, v)- \cdc(\Delta u, v) - \cdc(u, \Delta v))$ is what is called {\it the $\cdc_2$-operator} with the Laplace--Beltrami operator~$\Delta$. 
Due to the existence of good test functions in the domain of~$\Delta$ supporting the essential self-adjointness in this case, the {\it $\cdc_2$-criterion} is equivalent to the following gradient estimate: 
\begin{align}\label{D:BE}|\nabla T_tu|^2 \le e^{-2Kt}T_t|\nabla u|^2 \tag*{$\BE(K,\infty)$} \comma \quad u \in W^{1,2}(M) \comma
\end{align}
where $\{T_t\}_{t \ge 0}$ is the heat semigroup and $W^{1,2}(M)$ is the $(1,2)$-Sobolev space~on~$M$, see e.g., \cite[Cor.~3.3.19]{BakGenLed14}.  We refer to this formula as {\it $\BE(K,\infty)$ gradient estimate}, or {\it $\BE(K,\infty)$ curvature-dimension condition} in this paper.}
The Bakry--\'Emery~gradient estimate~$\BE(K,\infty)$  has rich applications to functional inequalities such as the (local) Poincar\'e inequalities, the (local) log-Sobolev inequality, the (local) hyper-contractivity and many others.  Furthermore, this discovery opened a way to generalise the concept of lower Ricci curvature bound to singular spaces beyond manifolds such as metric measure spaces and infinite-dimensional spaces since the formulation~$\BE(K,\infty)$ requires only a weak (Sobolev) differentiable structure, which does not require {\it Ricci curvature tensors} nor a $C^2$-structure. This concept particularly fits  the framework of Dirichlet forms, where the square gradient operator is replaced by what is called {\it square field operator} (or {\it carr\'e du champ}), the heat semigroup is replaced by the $L^2$-semigroups associated with the Dirichlet form, and the $(1,2)$-Sobolev space is replaced by the domain of the Dirichlet form. 
 We refer the readers to, e.g., \cite{BakGenLed14} and \cite{Vil09} for  comprehensive references. 

\paragraph{Main results} In this paper, we construct a strongly local symmetric  Dirichlet form (in Dfn.~\ref{d:DFF})
\begin{align*}
\E^{\U, \QP}(u)=\frac{1}{2}\int_{\U}\cdc^{\U}(u) \diff \QP \comma \quad  u \in \dom{\E^{\U, \QP}} 
\end{align*}
with the square field $\cdc^{\U}$ and the symmetrising measure $\QP=\sine_\beta$ for arbitrary $\beta>0$ 
 such that  $(\E^{\U, \QP}, \dom{\E^{\U, \QP}})$ satisfies $\BE(0,\infty)$. We note that  the $\cdc_2$-criterion is not available in this case because, due to the long-range correlation of $\sine_\beta$,  there is no known space of test functions for $(\E^{\U, \QP}, \dom{\E^{\U, \QP}})$ on which the corresponding $L^2$-infinitesimal generator has a concrete expression. 
 We also remark that the quasi-invariance of $\QP$ is unknown, i.e., we do not know whether the push-forward measure of $\QP$ by the shifts induced by any compactly supported smooth vector fields in $\R$ is equivalent to $\QP$. Thus, the standard integration by parts argument does not work to construct Dirichlet forms  in this case. 
\begin{thm}[Thm.~\ref{t: main}] \label{t:intromain}
Let $\beta>0$ and $\QP=\sine_\beta$.  The Dirichlet form $(\E^{\U, \QP}, \dom{\E^{\U, \QP}})$ constructed in Dfn.~\ref{d:DFF} satisfies the Bakry--\'Emery gradient estimate $\BE(0,\infty)$. Namely,  for the $L^2$-semigroup $\{T_t^{\U, \QP}\}_{t \ge 0}$ associated with $(\E^{\U, \QP}, \dom{\E^{\U, \QP}})$, 
\begin{align*}
\cdc^{\U}\bigl(T_t^{\U, \mu} u\bigr) \le T_t^{\U, \mu} \cdc^{\U}(u) \comma \quad u \in \dom{\E^{\U, \QP}}\quad t  \ge 0 \fstop
\end{align*} 
{In the case $\beta=2$, the curvature lower bound $K=0$ is optimal. }
\end{thm}
Thm.~\ref{t:intromain} states that the configuration space $\U$ endowed with the Dirichlet form structure~$(\E^{\U, \QP}, \dom{\E^{\U, \QP}})$ can be seen as a {\it non-negatively curved space} in the sense of Bakry--\'Emery.
{The Dirichlet form~$(\E^{\U, \QP}, \dom{\E^{\U, \QP}})$ constructed in this paper is {\it a priori} different from  those constructed in~\cite{Osa96, Osa12} for $\beta=1,2,4$, but they are {\it a posteriori} identified, see Rem.~\ref{r:IDI2}. Therefore, the $L^2$-semigroup~$\{T_t^{\U, \QP}\}_{t \ge 0}$ coincides with the transition probability of the unlabelled solution~$\mathsf X_t$ to \eqref{d:DBMS} after excluding a set of capacity zero from $\U$.  In particular, for $\beta=1, 2, 4$, the form $(\E^{\U, \QP}, \dom{\E^{\U, \QP}})$ is quasi-regular (see~\S\ref{ss:DF} for the definition of the quasi-regularity).  
Furthermore, by taking a particular smaller domain $\mathcal F \subset \dom{\E^{\U, \QP}}$, the quasi-regularity also holds true for every $\beta>0$, see Cor.~\ref{c:QRB}. Hence, by, e.g., \cite[Thm.~3.5 p.103]{MaRoe90},  there exists an associated diffusion process on~$\U$ for every $\beta>0$ (cf.~$\beta \ge 1$ in~\cite{Tsa16}) whose transition semigroup is given by the semigroup associated with $(\E^{\U, \QP}, \mathcal F)$.}

\paragraph{Functional inequalities}
The Bakry--\'Emery gradient estimate $\BE(0,\infty)$ provides various functional inequalities regarding quantitative estimates of the semigroup.  We start with the local Poincar\'e inequality, which, in a sense,  provides a {\it local} spectral gap estimate for the corresponding particle dynamics~\eqref{d:DBMS}, see Rem.~\ref{r:LSG}. 
 \begin{cor}[Cor.~\ref{t:LPS}] \label{c:1}
Let $\beta>0$ and $\QP=\sine_\beta$.  Then, the local Poincar\'e inequality holds: for every~$u \in \dom{\E^{\U, \QP}}$ and $t \ge 0$,
\begin{align*}
&T^{\U, \QP}_tu^2- (T^{\U, \QP}_tu)^2 \le 2tT^{\U, \QP}_t\cdc^{\U}(u)  \comma 
\\
&T^{\U, \QP}_tu^2- (T^{\U, \QP}_tu)^2 \ge 2t\cdc^{\U} (T^{\U, \QP}_tu) \fstop
\end{align*}
\end{cor}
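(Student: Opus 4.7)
The plan is to derive both inequalities from the Bakry--\'Emery estimate $\BE(0,\infty)$ of Theorem~\ref{t:intromain} by the standard interpolation argument along the heat semigroup. Let $L^{\U,\QP}$ denote the infinitesimal generator of the Dirichlet form $(\E^{\U,\QP}, \dom{\E^{\U,\QP}})$, and recall the chain-rule identity $L^{\U,\QP}(v^2)=2vL^{\U,\QP}v+2\cdc^{\U}(v)$, valid for $v$ in a suitable algebra of bounded elements of $\dom{L^{\U,\QP}}$.

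For fixed $t>0$ and $u$ in such a core, introduce the interpolating function
$$\phi(s):=T_s^{\U,\QP}\bigl((T_{t-s}^{\U,\QP}u)^2\bigr),\qquad s\in[0,t].$$
Differentiating in $s$, using $\partial_s T_s^{\U,\QP}=L^{\U,\QP}T_s^{\U,\QP}$ on both factors together with the chain rule above, the cross-terms cancel and one obtains $\phi'(s)=2T_s^{\U,\QP}\cdc^{\U}(T_{t-s}^{\U,\QP}u)$. Integrating over $s\in[0,t]$ yields the representation
$$T_t^{\U,\QP}u^2-(T_t^{\U,\QP}u)^2=2\int_0^t T_s^{\U,\QP}\cdc^{\U}(T_{t-s}^{\U,\QP}u)\diff s.$$

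Both bounds now follow by estimating the integrand uniformly in $s$ via Theorem~\ref{t:intromain}. For the upper bound, apply $\BE(0,\infty)$ to $u$ with time $t-s$ to get $\cdc^{\U}(T_{t-s}^{\U,\QP}u)\le T_{t-s}^{\U,\QP}\cdc^{\U}(u)$; since $T_s^{\U,\QP}$ preserves nonnegativity and $T_s^{\U,\QP}T_{t-s}^{\U,\QP}=T_t^{\U,\QP}$, the integrand is bounded above by the $s$-independent quantity $T_t^{\U,\QP}\cdc^{\U}(u)$, producing the factor $2t$. For the lower bound, apply $\BE(0,\infty)$ instead to the function $T_{t-s}^{\U,\QP}u$ with time $s$: this gives
$$\cdc^{\U}(T_t^{\U,\QP}u)=\cdc^{\U}\bigl(T_s^{\U,\QP}(T_{t-s}^{\U,\QP}u)\bigr)\le T_s^{\U,\QP}\cdc^{\U}(T_{t-s}^{\U,\QP}u),$$
so the integrand pointwise dominates $\cdc^{\U}(T_t^{\U,\QP}u)$, again yielding a factor $2t$.

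The main obstacle is not the algebra, which is a textbook Bakry--\'Emery computation, but the technical justification in the present infinite-dimensional, non-smooth setting: one must verify the chain rule for $L^{\U,\QP}$, the differentiability of $s\mapsto\phi(s)$, and the applicability of Theorem~\ref{t:intromain} on a class of functions dense enough to propagate both identities to general $u\in\dom{\E^{\U,\QP}}$. I would establish the representation first on a well-chosen core of bounded cylinder functions, on which $L^{\U,\QP}$ and $\cdc^{\U}$ admit explicit expressions, and then extend by $L^2$-continuity of $\{T_t^{\U,\QP}\}_{t\ge 0}$ together with the lower semicontinuity properties intrinsic to the Dirichlet form $(\E^{\U,\QP},\dom{\E^{\U,\QP}})$.
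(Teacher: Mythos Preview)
Your argument is the standard Bakry--\'Emery interpolation proof and is correct in spirit; the paper itself does not reproduce it but simply invokes it as a known consequence of $\BE(0,\infty)$, citing \cite[Thm.~4.7.2]{BakGenLed14}. So you have written out the proof the paper merely references.

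One remark on your closing paragraph: in this setting there is no convenient algebra of ``cylinder functions'' on which $L^{\U,\QP}$ acts explicitly --- the form $(\E^{\U,\QP},\dom{\E^{\U,\QP}})$ is constructed as a monotone limit and the generator is only known abstractly. The technical justification you worry about is instead handled entirely within the abstract Dirichlet-form framework (as in \cite{BakGenLed14} or \cite{AmbGigSav15}): one regularises via the semigroup or resolvent to land in $\dom{L^{\U,\QP}}$ with bounded carr\'e du champ, uses the diffusion property of $\cdc^{\U}$ for the chain rule, and passes to the limit using lower semicontinuity of the form and $L^1$-continuity of $\cdc^{\U}$ along approximating sequences. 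No explicit core computation is needed, which is why the paper is content to cite the result.
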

Suppose in addition that the form $(\E^{\U, \QP}, \dom{\E^{\U, \QP}})$ is quasi-regular (it is known, e.g., for $\beta=1,2,4$, see Rem.~\ref{r:IDI2}). 
Then, we have the following functional inequalities.
\begin{cor}[Cor.~\ref{p:PBE}, \ref{c:LLSI}, \ref{c:LHC}] \label{c:3-1}
Let $\beta>0$ and $\QP=\sine_\beta$.  Suppose that the form $(\E^{\U, \QP}, \dom{\E^{\U, \QP}})$ is quasi-regular.
Then, the following hold:
\begin{enumerate}[(a)]  
\item $(${\bf$p$-Bakry-\'Emery estimate}$)$
 The form $(\E^{\U, \QP}, \dom{\E^{\U, \QP}})$ satisfies $\BE_p(K,\infty)$ for every $1 \le p <\infty$:
 $$\cdc^{\U}(T_t^{\U, \QP}u)^{\frac{p}{2}} \le T_t^{\U, \QP}\bigl(\cdc^{\U}(u)^{\frac{p}{2}}\bigr) \cquad u \in \dom{\E^{\U, \QP}} \quad t \ge 0 \fstop$$
 \item $(${\bf local log-Sobolev inequality}$)$
 For every positive $u \in \dom{\E^{\U, \QP}}$ and $t \ge 0$,
\begin{align*}
&T^{\U, \QP}_t(u\log u)- T^{\U, \QP}_tu\log T^{\U, \QP}_t u \le t T^{\U, \QP}_t\biggl( \frac{\cdc^{\U}(u)}{u} \biggr) \comma
\\
&T^{\U, \QP}_t(u\log u)- T^{\U, \QP}_tu\log T^{\U, \QP}_t u \ge t \frac{\cdc^{\U}(T^{\U, \QP}_t u)}{T^{\U, \QP}_t u}  \fstop
\end{align*}
 \item $(${\bf local hyper-contractivity}$)$
 For every $t>0$, $0<s \le t$, and $1 < p <q<\infty$ so that 
 $$\frac{q-1}{p-1}=\frac{t}{s} \comma$$
 it holds that 
 $$\Bigl(T_s^{\U, \QP}(T_{t-s}^{\U, \QP}u)^q\Bigr)^{1/q} \le \Bigl(T_t^{\U, \QP}u^p\Bigr)^{1/p} \cquad u \ge 0 \fstop$$
 \end{enumerate}
\end{cor}

\smallskip

\paragraph{Extended distances in $\U$} A distance function that allows to take $+\infty$ is called {\it extended distance}. 
In this paper, we study two {\it extended} distances $\mssd_\U$ and $\bar\mssd_\U$ on $\U$, both of which stem from the optimal transport theory.
The {\it $L^2$-transportation extended distance} $\mssd_{\dUpsilon}$ is defined as
\begin{align*}
\mssd_{\dUpsilon}(\gamma,\eta)\eqdef \inf_{\cpl\in\Cpl(\gamma,\eta)} \paren{\int_{\R^{2}} \mssd^2(x,y) \diff\cpl(x,y)}^{1/2}\comma \qquad \inf{\emp}=+\infty \comma
\end{align*}
where $\mssd(x,y)=|x-y|$ is the standard Euclidean distance in~$\R$, and $\Cpl(\gamma,\eta)$ denotes the set of Radon measures on $\R^{2}$ whose first (resp.~second) marginal is $\gamma$ (resp.~$\eta$).
As a variant of $\mssd_\U$, we introduce the \emph{$L^2$-transportation-type}  \emph{extended distance}~$\bar{\mssd}_\U$, defined as 
\begin{align*}
\bar{\mssd}_\U(\gamma, \eta):=
\begin{cases}
\mssd_\U(\gamma, \eta) \quad &\text{if $\gamma_{B_r^c}=\eta_{B_r^c}$ for some $r>0$\ ,}
\\
+\infty \quad  &\text{otherwise} \comma
\end{cases}
\end{align*}
where  $\gamma_{B_r^c}:=\gamma\mrestr{B_r^c}$ denotes the configuration $\gamma$ restricted (as a measure) on the complement $B_r^c:=\R \setminus B_r$ of the interval $B_r:=[-r, r]$. 

As one can see from the definitions above, the function~$\mssd_\U$ as well as $\bar\mssd_\U$ could take $+\infty$ very often from the measure-theoretic viewpoint. Indeed, every metric ball is a set of measure zero with respect to $\QP$, which is similar to the Cameron--Martin distance for the Wiener space, see (d) Rem.~\ref{r:ped}.
However, if we see the distance $\gamma \mapsto \mssd_\U(\gamma, \Lambda)=\inf_{\eta \in \Lambda}\mssd_\U(\gamma, \eta)$ (similarly for $\bar\mssd_\U$) from a set $\Lambda \subset \U$, it recovers the finiteness and provides a non-trivial Lipschitz function. See Example~\ref{r:LT} for examples and counterexamples for Lipschitz functions with respect to $\mssd_\U$ and $\bar\mssd_\U$, where, interestingly, cylinder functions are not necessarily Lipschitz functions. 
We note that the function~$\bar\mssd_\U: \U^{\times 2} \to \R_+\cup \{+\infty\}$ is Borel measurable but not continuous nor lower semi-continuous with respect to the product vague topology~$\tau_\mrmv^{\times 2}$, see Rem.~\ref{r:ped}. Hence, the space~$\Lip_b(\U, \bar\mssd_\U)$ of bounded Lipschitz functions $u: \U \to \R$ with respect to~$\bar\mssd_\U$ does not necessarily consist of $\tau_\mrmv$-continuous functions nor even measurable functions with respect to the~$\QP$-completion~$\mathscr B(\tau_\mrmv)^\QP$  of the Borel $\sigma$-algebra~$\mathscr B(\tau_\mrmv)$. See~\cite[Example~3.4]{LzDSSuz21} for the existence of non-measurable Lipschitz functions. We, therefore, denote by $\Lip(\U, \bar\mssd_\U, \QP)$ the subspace of $\Lip(\U, \bar\mssd_\U)$ whose elements are~$\mathscr B(\tau_\mrmv)^\QP$-measurable. 

\paragraph{Lipschitz structure vs Dirichlet forms}
In the following, we relate the Lipschitz constant~$\Lip_{\bar\mssd_\U}(u)$ with respect to $\bar\mssd_\U$ and the square field $\cdc^{\U}(u)$ for $u \in \Lip_b(\U, \bar\mssd_\U, \QP)$.
\begin{thm}[Prop.~\ref{p:DF}] \label{t:1.5}
Let $\beta>0$ and $\QP=\sine_\beta$.  Then, the Rademacher-type property holds:
$$\Lip_b(\U, \bar\mssd_\U, \QP) \subset \dom{\E^{\U, \QP}} \comma \quad \cdc^{\U, \QP}(u) \le \Lip_{\bar\mssd_\U}(u)^2 \fstop$$
 \end{thm}

Suppose that the form $(\E^{\U, \QP}, \dom{\E^{\U, \QP}})$ is quasi-regular (e.g., $\beta=1,2,4$, see Rem.~\ref{r:IDI2}). Then, there exists a $\QP$-symmetric diffusion process~$\{(X_t, \mathbb P_\gamma): t\ge 0, \ \gamma \in \U\}$ so that 
$T_t^{\U, \QP}u(\gamma) = \mathbb E_\gamma[u(X_t)]$ for every $t \ge 0$, every bounded Borel function $u$ and quasi-every~$\gamma$ (i.e., the equality holds after excluding a set of capacity zero associated with the Dirichlet form, see \S\ref{ss:DF}).
  In particular, there exists a transition probability measure $P^{\U, \QP}_t(\gamma, \diff\eta)$ satisfying
\begin{align} \label{e:FE}
T_t^{\U, \QP}u(\gamma) = \int_{\U} u(\eta) P^{\U, \QP}_t(\gamma, \diff\eta) \qquad \text{quasi-every~$\gamma$} \fstop
\end{align}
Combining Thm.~\ref{t:1.5} with the local Poincar\'e inequality in Cor.~\ref{c:1}, we have the following exponential decay estimate of the transition probability $P_t^{\U, \QP}(\gamma, \diff \eta)$ in terms of $1$-Lipschitz functions with respect to~$\bar\mssd_\U$.
\begin{cor}[Cor.~\ref{c:TES}, exponential integrability]\label{c:3}
Let $\beta>0$ and $\QP =\sine_\beta$. Suppose that the form $(\E^{\U, \QP}, \dom{\E^{\U, \QP}})$ is quasi-regular.
If $u$ is a $\bar{\mssd}_\U$-Lipschitz $\QP$-measurable function with $\Lip_{\bar{\mssd}_\U}(u) \le 1$ and $|u(\gamma)|<+\infty$ $\QP$-a.e.~$\gamma$, then
$$\int_{\U} e^{s u(\eta)} P^{\U, \QP}_t(\gamma, \diff\eta)<+\infty \quad \text{$\QP$-a.e.~} \quad s<\sqrt{2/t} \fstop$$
\end{cor}

\paragraph{Curvature bound in terms of the metric $\bar\mssd_\U$}
In the case of Riemannian manifolds~$(M,g)$, the Ricci curvature lower bound~\eqref{e:RLB} is known to be equivalent to the dimension-free Harnack inequality (\cite[Thm.~2.3.3]{Wan14}): for $\alpha>1$ and every bounded Borel function $u \ge 0$ on $M$
\begin{align*}
(T_tu)^\alpha(x)\le T_tu^\alpha(y) \exp\Bigl\{ \frac{\alpha K}{2(\alpha-1)(1-e^{-2Kt})}\mssd_g(x, y)^2\Bigr\} \comma 
\end{align*}
where $\mssd_g$ is the geodesic distance induced by $g$. This provides a characterisation of \eqref{e:RLB}  in terms of the distance~$\mssd_g$ and the heat semigroup~$T_t$. 
In the following theorem, we prove the dimension-free Harnack inequality with $K=0$ in terms of~$\bar\mssd_\U$ and~$T_t^{\U, \QP}$. Furthermore, we prove the log-Harnack inequality, the Lipschitz contraction estimate and  the Lipschitz regularisation property. For a $\QP$-class $u$ of functions, we say that a function $v: \U \to \R$ is a {\it $\QP$-modification of $u$} if $u=v$ $\QP$-almost everywhere. 
\begin{thm}[Thm.~\ref{t:DFH}] \label{t:3}
Let $\beta>0$ and $\QP=\sine_\beta$.  Then, the following hold:
\begin{enumerate}[(a)]
\item Wang's dimension-free Harnack inequality: for every non-negative $u \in L^\infty(\U, \QP)$, $t>0$ and $\alpha>1$, there exists $\Omega \subset \U$ so that $\QP(\Omega)=1$ and 
$$(T^{\U, \QP}_tu)^\alpha(\gamma)\le T^{\U, \QP}_tu^\alpha(\eta) \exp\Bigl\{ \frac{\alpha }{4(\alpha-1){t}}\bar{\mssd}_\U(\gamma, \eta)^2\Bigr\} \comma \quad \text{$\gamma, \eta \in \Omega$} \ ;$$

\item Log-Harnack inequality: for any non-negative $u \in L^\infty(\U, \QP)$, $\e \in (0, 1]$ and $t>0$, there exists $\Omega \subset \U$ so that $\QP(\Omega)=1$ and 
$$T^{\U, \QP}_t\log (u+\e)(\gamma) \le \log (T^{\U, \QP}_tu(\eta) +\e)+ \frac{\bar{\mssd}_\U(\gamma, \eta)^2}{{4t}}\comma \quad \text{$\gamma, \eta \in \Omega$} \ ;$$
\item  Lipschitz contraction: for every $u \in \Lip_b(\U, \bar{\mssd}_\U, \QP)$ and $t>0$,
$T_t^{\U, \QP}u$ has a $\bar{\mssd}_\U$-Lipschitz $\QP$-modification~(denoted by the same symbol~$T_t^{\U, \QP}u$)
such that 
$$\Lip_{\bar{\mssd}_\U}({T}_t^{\U, \QP}u) \le \Lip_{\bar{\mssd}_\U}(u) \ ;$$
\item  {$L^\infty(\QP)$-to-$\Lip(\U, \bar{\mssd}_\U, \QP)$ {regularisation} property}: 
For $u \in L^\infty(\QP)$ and $t>0$, 
$T_t^{\U, \QP}u$ has a $\bar{\mssd}_\U$-Lipschitz $\QP$-modification~(denoted by the same symbol~$T_t^{\U, \QP}u$) 
such that
\begin{align*}
&\Lip_{\bar{\mssd}_\U}({T}_t^{\U, \QP} u) \le \frac{1}{\sqrt{2} t} \|u\|_{L^\infty(\QP)}  \fstop
\end{align*}
\end{enumerate}
 \end{thm}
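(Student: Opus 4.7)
All four items are standard consequences of the Bakry--\'Emery gradient estimate $\BE(0,\infty)$ from Theorem~\ref{t:intromain}, combined with the compatibility between the square field $\cdc^\U$ and the transportation-like distance $\bar\mssd_\U$ established in Theorem~\ref{t:1.5}. The plan is to reduce each of the four assertions to a combination of three ingredients only: (i) the $L^2$-semigroup $T_t^{\U,\QP}$; (ii) the pointwise estimate $\cdc^\U(T_tu)\le T_t\cdc^\U(u)$; (iii) the comparison $\cdc^\U(u)\le \Lip_{\bar\mssd_\U}(u)^2$ for $u\in\Lip_b(\bar\mssd_\U,\QP)$.

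\textbf{Dimension-free Harnack and log-Harnack.} I would run Wang's interpolation argument in the configuration-space setting. Fix $t>0$, $\alpha>1$, a bounded $u\ge 0$, and two configurations $\gamma,\eta$. Approximate the pair by an almost-geodesic curve $s\mapsto \gamma_s$, $s\in[0,t]$, in $\bar\mssd_\U$ and define $\Phi(s):=T_s^{\U,\QP}\bigl((T_{t-s}^{\U,\QP}u)^\alpha\bigr)(\gamma_s)$. Differentiating $\Phi$ and applying $\BE(0,\infty)$ to control $\cdc^\U(T_{t-s}u)$ by $T_{t-s}\cdc^\U(u)$, one bounds $\Phi'(s)$ by a quadratic expression in $|\dot\gamma_s|_{\bar\mssd_\U}$ with coefficient $\alpha/(2(\alpha-1))$. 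Integrating over $[0,t]$ yields the dimension-free Harnack bound with constant $\tfrac{\alpha}{2(\alpha-1)}\bar\mssd_\U(\gamma,\eta)^2$, on the full-measure set $\Omega$ where everything in sight is finite and admits a quasi-continuous representative. The log-Harnack follows by replacing $(T_{t-s}u)^\alpha$ with $\log(T_{t-s}u+\e)$ and carrying out the analogous derivative computation.

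\textbf{Lipschitz contraction and $L^\infty$-to-$\Lip$ Feller.} For the contraction, I would take the square root of $\BE(0,\infty)$ to obtain the pointwise bound $\sqrt{\cdc^\U(T_tu)}\le\sqrt{T_t\cdc^\U(u)}$, then use Theorem~\ref{t:1.5} to majorise the right-hand side by $\Lip_{\bar\mssd_\U}(u)$. This gives a $\QP$-a.e.\ gradient estimate; one then extracts a $\bar\mssd_\U$-Lipschitz modification $\tilde T_t^{\U,\QP}u$ by the converse direction of the Rademacher-type correspondence in Theorem~\ref{t:1.5}, which also preserves the Lipschitz constant. For the Feller statement, the classical Ledoux argument applies: the left local Poincar\'e inequality in Corollary~\ref{c:1} gives
\begin{equation*}
2t\,\cdc^\U(T_t^{\U,\QP}u)\le T_t^{\U,\QP}u^{2}-(T_t^{\U,\QP}u)^{2}\le \|u\|_{L^\infty(\QP)}^{2}\comma
\end{equation*}
so that $\sqrt{\cdc^\U(T_t^{\U,\QP}u)}\le (\sqrt 2\, t)^{-1}\|u\|_{L^\infty(\QP)}$ $\QP$-a.e.; Theorem~\ref{t:1.5} then produces the required Lipschitz modification.

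\textbf{Main obstacle.} The subtle step is the passage from a $\QP$-a.e.\ gradient inequality to a pointwise $\bar\mssd_\U$-Lipschitz modification on a full-measure set, and the corresponding identification of quasi-continuous representatives needed for the pointwise Harnack bounds. On a smooth manifold this is automatic, but on $\U$ one must argue using density of cylinder functions in $\dom{\E^{\U,\QP}}$, quasi-continuous representatives, and the fact that $\bar\mssd_\U$ is only a pseudometric which may take the value $+\infty$ on pairs of configurations with different asymptotic mass distributions. I would handle this by first approximating $u$ by cylinder $\bar\mssd_\U$-Lipschitz functions, establishing the four inequalities for the approximants (where the representative is unambiguous), and then passing to the limit via the Feller-type regularisation $T_\e^{\U,\QP}u$, with $\e\downarrow 0$ recovered from the Lipschitz contraction. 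The restriction of all statements to a full-measure set $\Omega$ precisely accommodates this technical caveat.
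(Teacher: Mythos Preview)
Your approach is fundamentally different from the paper's, and it contains a genuine gap.

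\textbf{The missing ingredient.} For both the Lipschitz contraction and the $L^\infty$-to-$\Lip$ Feller property you arrive at a $\QP$-a.e.\ bound of the form $\cdc^\U(T_t^{\U,\QP}u)\le L^2$ and then invoke ``the converse direction of the Rademacher-type correspondence in Theorem~\ref{t:1.5}'' to produce a $\bar\mssd_\U$-Lipschitz modification. But Theorem~\ref{t:1.5} (Prop.~\ref{p:DF} in the body) only gives the forward Rademacher direction $\Lip_b(\bar\mssd_\U,\QP)\subset\dom{\E^{\U,\QP}}$ with $\cdc^\U(u)\le\Lip_{\bar\mssd_\U}(u)^2$. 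The Sobolev-to-Lipschitz direction---that a square-field bound yields a Lipschitz modification---is \emph{not} established anywhere in the paper for $(\U,\bar\mssd_\U,\QP)$, and your approximation scheme by cylinder functions does not supply it either: passing to the limit preserves a.e.\ gradient bounds but does not by itself produce a Lipschitz representative on an extended metric space where the distance takes $+\infty$ on sets of positive measure.

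Similarly, your derivation of the Harnack inequalities via Wang's interpolation along ``almost-geodesics'' in $\bar\mssd_\U$ presumes a workable chain rule for $s\mapsto T_s^{\U,\QP}v(\gamma_s)$ along curves in $\U$. The abstract Dirichlet-form versions of this argument (e.g.\ in \cite{AmbGigSav15}) require the ambient space to be a genuine metric measure space with the intrinsic distance generating the topology; the paper explicitly notes in the \emph{Comparison with Literature} section that $(\U,\bar\mssd_\U,\QP)$ fails these hypotheses, which is precisely why the direct route is blocked.

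\textbf{What the paper does instead.} The paper's proof of Thm.~\ref{t:DFH} is a finite-particle approximation. For each $r>0$ and $\QP$-a.e.\ $\eta$, the conditioned space $(\U^k(B_r),\mssd_\U,\QP_r^{k,\eta})$ is a bona fide $\RCD(0,\infty)$ metric measure space (Prop.~\ref{p:BE2}), where the log-Harnack, dimension-free Harnack, Lipschitz contraction, and $L^\infty$-to-$\Lip$ properties are already known from the literature. Via the superposition identity $T_{r,t}^{\U,\QP}u(\gamma)=T_t^{\U(B_r),\QP_r^\gamma}u_r^\gamma(\gamma_{B_r})$ (Cor.~\ref{prop: 1}) these lift to the truncated semigroup $T_{r,t}^{\U,\QP}$ on a full-measure set, and one then passes to the limit $r\to\infty$ using the $L^2$-convergence of semigroups (Prop.~\ref{prop: MGS}). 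The characterisation~\eqref{e:LLR2} of when $\bar\mssd_\U(\gamma,\eta)<\infty$ is what makes the two-point inequality on $\U$ reduce, for each such pair, to a two-point inequality on some fixed $\U^k(B_r)$. For (c) and (d) the paper finishes with a McShane extension rather than any Sobolev-to-Lipschitz step. This approximation bypasses both obstacles in your argument.
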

 We note that the RHS of the dimension-free/log Harnack inequalities including the term~$\bar\mssd_\U(\gamma, \eta)$ is not trivial (i.e., $\bar\mssd_\U(\gamma, \eta) \not \equiv +\infty$ on $\Omega$) as long as $\QP$ is tail-trivial (e.g., $\beta=2$), see Rem.~\ref{r:UME}.

\paragraph{Dyson Brownian motions as a gradient flow}
Jordan, Kinderlehrer and Otto \cite{JorKinOtt98} discovered a class of  partial differential equations that can be realised as gradient flows in the space~$(\mathcal P_2, \mssW_2)$ of probability measures with finite second moment endowed with the $L^2$-Monge-Kantrovich-Rubinstein-Wasserstein distance~$\mssW_2$. In particular, the dual flow of the heat equation in the Euclidean space $\R^n$, where the corresponding diffusion process is the Brownian motion in $\R^n$,  is characterised as the $\mssW_2$-gradient flow
$$``\partial_t \nu=-\nabla_{\mssW_2}\Ent(\nu)"$$ of the Boltzmann-Shannon entropy $\Ent(\nu)=\int_{\R^n} \rho \log \rho \diff x$ with $\diff \nu = \rho \diff x$. Here,  {\it the $\mssW_2$-gradient flow} is defined as the energy dissipation equality:
\begin{align} \label{e:GFE}
\frac{\diff}{\diff t} \Ent({\nu_t}) = -|\dot\nu_t|^2 = -|{\sf D}^-_{\mssW_{2}} \Ent|^2(\nu_t) \quad\text{a.e.~$t>0$} \comma
\end{align}
where $|\dot\nu_r|$ denotes the metric speed of the curve $(\nu_r)$ and $|{\sf D}_{\mssW_{2}}^- \Ent|$ is the descending slope of $\Ent$ with respect to $\mssW_{2}$, see \S\ref{s:MS}. This relates {\it the Brownian motion, the Boltzmann--Shannon entropy}, and {\it the optimal transport distance $\mssW_2$} in a single equation~\eqref{e:GFE}, and brought a new perspective of the Brownian motion  as a {\it steepest descent} of the Boltzmann-Shannon entropy with respect to~$\mssW_2$. 

Exploiting Thm.~\ref{t:intromain}, we can extend this perspective to the case of infinite Dyson Brownian motions in terms of the Boltzmann-Shannon entropy $\Ent_\QP(\nu)=\int_{\U} \rho \log \rho \diff \QP$ for $\diff\nu=\rho\cdot \diff\QP$ (simply written as $\nu=\rho\cdot\QP$) associated with $\mu=\sine_\beta$ for $\beta>0$ and a Benamou--Brenier-like extended distance~$\mssW_{\E}$.  Let $\mathcal P(\U)$ be the space of all Borel probability measures in $\U$ and $\mathcal P_\QP(\U)=\{\nu \in \mathcal P(\U): \nu \ll \QP\}$. 
For~$\nu, \sigma \in \mathcal P_\QP(\U)$, we define $\mssW_{\E}$ as 
\begin{align*} 
\mssW_{\E}(\nu, \sigma)^2:=\inf\biggl\{ \int_0^1 \|\rho'_t\|^2 \diff t : (\rho_t) \in \mathsf{CI}(\E^{\U, \QP}) \comma \nu=\rho_0\cdot \QP\comma \sigma=\rho_1\cdot \QP\biggr\}\comma
\end{align*}
where $(\rho_t) \in \mathsf{CI}(\E^{\U, \QP})$ satisfies a  {\it continuity inequality}, and $\|\rho'_t\|$ is the {\it modulus of verocity}, see Dfn.~\ref{d:CI}.
If there is no $(\rho_t) \in \mathsf{CI}(\E^{\U, \QP})$ connecting $\nu$ and $\sigma$, we define $\mssW_{\E}(\nu, \sigma)=+\infty$. The extended distance $\mssW_\E$ can be thought of as an intrinsic distance on~$\mathcal P_\QP(\U)$ induced by $(\E^{\U, \QP}, \dom{\E^{\U, \QP}})$ as it is determined only by the Dirichlet form data. 
 Let $\dom{\Ent_\QP}:=\{\nu \in \mathcal P(\U): \Ent_\QP(\nu)<+\infty\}$ be the domain of $\Ent_\QP$. Let $t\mapsto \mathcal T_t^{\U, \QP}\nu$ be the dual flow of~$T_t^{\U, \QP}$ defined as
$$\mathcal T_t^{\U, \QP}\nu:=(T_t^{\U, \QP}\rho)\cdot  \mu \cquad \nu=\rho\cdot \mu \in \mathcal P(\U) \fstop$$
 \begin{cor}[{Cor.~\ref{t:mainc}, \ref{c:GC}, \ref{c:GF}}] \label{intro:mainc} Let $\mu=\sine_\beta$  with $\beta>0$.
\begin{enumerate}[(a)]
 \item Evolutional variation inequality: For every $ \nu, \sigma\in \dom{\Ent_\QP}$ with $\mssW_{\E}(\nu, \sigma)<+\infty$, the curve $t \mapsto \mathcal T^{\U, \QP}_t \sigma \in (\mathcal P(\U), \mssW_{\E})$ is locally absolutely continuous, $\Ent_\QP(\mathcal T_t^{\U, \QP}\sigma)<+\infty$, $\mssW_{\E}(\mathcal T_t^{\U, \QP}\sigma, \nu)<+\infty$ for every $t>0$, and 
\begin{align*} 
\frac{1}{2}\frac{\diff^+}{\diff t}{\mssW}_{\E}\bigl({\mathcal T^{\U, \QP}_t \sigma}, \nu \bigr)^2  \le \Ent_{\mu}({\nu}) - \Ent_{\mu}({\mathcal T^{\U, \QP}_t \sigma})\cquad t>0 \fstop 
\end{align*}
\item \label{cc:1} {Geodesic convexity}: The space~$(\dom{\Ent_\QP}, \mssW_{\E})$ is an extended geodesic metric space. Namely, for every pair $\nu, \sigma \in \dom{\Ent_\QP}$ with $\mssW_{\E}(\nu, \sigma)<+\infty$, there exists $\mssW_{\E}$-Lipschitz curve $\nu_\cdot: [0,1] \to (\dom{\Ent_\QP},\mssW_{\E})$ so that
\begin{align*} 
\nu_0=\nu \cquad \nu_1=\sigma \cquad \mssW_{\E}(\nu_t, \nu_s) =|t-s|\mssW_{\E}(\nu, \sigma) \cquad s, t \in [0, 1]\fstop
\end{align*}
Furthermore, the entropy~$\Ent_\QP$ is convex along every ${\mssW}_{\E}$-geodesic $(\nu_t)_{t \in [0,1]}$$:$
\begin{align*}
\Ent_\QP(\nu_t) \le (1-t)\Ent_{\QP}(\nu_0) + t \Ent_{\QP}(\nu_1) \cquad t \in [0,1] \fstop
\end{align*}
\item \label{cc:1-2} Gradient flow: The dual flow~$\bigl\{\mathcal T_t^{\U, \QP}\nu_0\bigr\}_{t \ge 0}$ is the unique solution to the $\mssW_{\E}$-gradient flow of $\Ent_{\QP}$ starting at $\nu_0$. Namely, for any $\nu_0 \in \dom{\Ent_\QP}$, the curve $[0, +\infty) \ni t \mapsto \nu_t=\mathcal T_t^{\U, \QP} \nu_0\in \dom{\Ent_\QP}$ is the unique solution to the energy equality starting at $\nu_0$:
\begin{align*}
\frac{\diff}{\diff t} \Ent_\QP({\nu_t}) = -|\dot\nu_t|^2 = -|{\sf D}^-_{\mssW_{\E}} \Ent_\QP|^2(\nu_t) \quad\text{a.e.~$t>0$} \fstop
\end{align*}
Here, $|\dot\nu_t|:=\lim_{s \to t}\frac{\mssW_{\E}(\nu_s, \nu_t)}{|s-t|}$ is the metric speed of $\nu_t$ and 
\begin{align*}
|\mathsf D_{\mssW_{\E}}^-\Ent_\QP|(\nu):=
\begin{cases} \displaystyle
\limsup_{\sigma \to \nu}\frac{\bigl(\Ent_{\QP}(\sigma)-\Ent_{\QP}(\nu)\bigr)^-}{\mssW_{\E}(\sigma, \nu)} &\text{if $\nu$ is not isolated} , 
\\
0 &\text{otherwise} \fstop
\end{cases}
\end{align*} 
\end{enumerate}
\end{cor}

\paragraph{Generalisation beyond $\sine_\beta$} At the end of this article, our results will be extended to $\BE(K,\infty)$ with $K \in \R$ for a general probability measure~$\QP$ in $\U=\U(\R)$ satisfying {\it conditional geodesic $K$-convexity}, see Thm.~\ref{t:GT}. We apply Thm.~\ref{t:GT} to prove $\BE(0,\infty)$ of the Dirichlet form $(\E^{\U, \QP}, \dom{\E^{\U, \QP}})$ associated with the law of the one-dimensional $(\beta, s)$-circular Riesz ensemble~$\QP=\QP_{\beta, s}$ with $\beta>0$ and $s \in(0,1)$, where the interaction potential is given by $g(x)=|x|^{-s}$ for $x \in \R$.  It was introduced in \cite[Thm.~1.8]{DerVas21} as a subsequential infinite-volume limit of the finite-volume Riesz gas.

\begin{cor}[Cor.~\ref{c:BRE}]
Let $\QP=\QP_{\beta, s}$ with $\beta>0$ and $ s \in (0,1)$ be the one-dimensional $(\beta, s)$-circular Riesz ensemble. 
Then, the corresponding Dirichlet form $(\E^{\U, \QP}, \dom{\E^{\U, \QP}})$ satisfies $\BE(0,\infty)$ for $\beta>0$ and $s \in (0,1)$.
 Furthermore, the statements of Thm.~\ref{t:intromain},~\ref{t:1.5},~\ref{t:3} and Cor.~\ref{c:1},\ref{intro:mainc}  hold in this case. 
\end{cor}

\subsection*{Comparison with the finite particle case}
It is a classical result in random matrix theory that the finite particle Dyson Brownian motion satisfies the $\cdc_2$-condition~$\cdc_2 \ge 0$, which can be immediately seen by the computation of the Hessian with the logarithmic interaction potential. The derivation from $\cdc_2 \ge 0$ to the $\BE(0,\infty)$ gradient estimate is the technical part. To apply a general theory, e.g., \cite[Cor.~3.3.19]{BakGenLed14}, one needs a good space of test functions in the domain of the infinitesimal generator supporting e.g., the {essential self-adjointness}. 
In the infinite particle case, however, the $\cdc_2$-criterion is not available because, due to the long range correlation of $\sine_\beta$, there is no known space of test functions on which the $L^2$-infinitesimal generator of $(\E^{\U, \QP},\dom{(\E^{\U, \QP}})$ is computable and has a concrete expression in the $L^2$ space. 
In the proof of Thm.~\ref{t:intromain}, we pay great attention to the construction of the domain~$\dom{\E^{\U, \QP}}$ that is large enough to lift $\BE(0,\infty)$ from the space of finite particles, by which we can avoid using the $\cdc_2$-criterion. To do so, we use the measurable extended distance $\bar\mssd_\U$ on $\U$ and construct the domain~$\dom{\E^{\U, \QP}}$ large enough to contain the $\QP$-measurable $\bar\mssd_\U$-Lipschitz algebra $\Lip_b(\U, \bar\mssd_\U, \QP)$ and have the Rademacher-type property $\cdc^{\U}(u) \le \Lip_{\bar\mssd_\U}(u)^2$. We then leverage recent developments of geometric analysis on metric measure spaces (such as the theory of $\RCD$ spaces) and the DLR equation for $\sine_\beta$ recently proven in \cite{DerHarLebMai20}.

\subsection*{Comparison with Literature}
To the author's best knowledge, this is the first article addressing the lower Ricci curvature bound on~$\U$ under the presence of interactions. 
{Even with a simpler interaction potential like compactly supported smooth pair potential with Ruelle condition,  no result regarding the curvature bound has been discussed so far.} 
In the non-interacting case where the symmetrising  measure~$\QP$ is the Poisson measure, it has been studied in~\cite{ErbHue15} in the case where the base space is Riemannian manifolds and in~\cite{LzDSSuz22} in the case where the base space is a general diffusion space.
{In~\cite{ErbHueJalMul23}, a specific entropy associated with the Poisson point process and an optimal transport distance have been introduced in the space of translation-invariant point processes. They established the evolutional variation inequality, the gradient flow property, the displacement convexity and the HWI inequality for the flows induced by independent Brownian particles starting at stationary measures.}
 In the case of finite particle systems, a variable Ricci curvature bound has been addressed in~\cite{GyuVon20} for Coulomb-type potentials. 

Up until now, only little is understood about the transition probability of interacting {\it infinite} particle diffusions, and, in particular almost nothing is known about {\it quantitative estimates}. 
The functional inequalities in  Cor.~\ref{c:1},~\ref{c:3-1},  Thm.~\ref{t:3}  and the exponential decay estimate of the transition semigroup in Cor.~\ref{c:3} contribute to this direction.
Furthermore, the dimension-free Harnack inequality in Thm.~\ref{t:3} provides quantitative estimates of the semigroup  in terms of the metric structure~$\bar\mssd_\U$, which could give a new approach to study the Dyson SDEs in a geometric manner.  We note that~\cite{KopStu21} provided an equivalence between a synthetic lower Ricci curvature bound (what is called $\RCD$ condition) and the Wang's dimension-free Harnack inequality in a framework of metric measure spaces. We cannot however apply their result to our setting because (a) we do not know if $(\U, \bar\mssd_\U, \QP)$ is an $\RCD$ space; (b) $(\U, \bar\mssd_\U, \QP)$ is not a metric measure space due to the fact that $\bar\mssd_\U$ does not generate the given topology~$\tau_\mrmv$ on~$\U$ and $\bar\mssd_\U$ takes $+\infty$ on sets of positive measure with respect to~$\QP$. We, therefore, prove the dimension-free Harnack inequality through a finite-particle approximation.

On the {\it qualitative side}, we revealed in Cor.~\ref{intro:mainc} that the infinite Dyson Brownian motion is the unique $\mssW_\E$-gradient flow of $\Ent_\QP$ associated with $\mu=\sine_\beta$, which provides a new perspective of the Dyson Brownian motion as a steepest descent of the Boltzmann--Shannon entropy associated with $\sine_\beta$ in terms of the extended distance~$\mssW_\E$.

\subsection*{The unlabelled solutions in the range $0<\beta<1$}
We constructed a Dirichlet form whose symmetrising (thus also invariant) measure is $\sine_\beta$ for every $\beta>0$ (cf., for the case of $\beta=1, 2, 4$ in~\cite{Osa96, Osa13}). 
Due to Cor.~\ref{c:QRB}, there exists a diffusion process associated with $(\E^{\U, \QP}, \mathcal F)$ for every $\beta>0$. The range $0<\beta<1$ was  not covered by the construction of the SDE \eqref{d:DBMS} in~\cite{Tsa16}, where only the range $\beta \ge 1$ was discussed.  It would therefore be a natural question whether our diffusion process gives the unlabelled solution to \eqref{d:DBMS} in the range $0<\beta<1$. This question is, however,  more delicate than the case $\beta \ge 1$ because the system \eqref{d:DBMS} is expected to have collisions among particles with positive probability. In this case, the diagonal set in the product space~$\R^{\infty}$ as well as in the configuration space~$\U$ plays a role as a boundary, and one needs to understand the boundary behaviour of the Dyson SDE at the diagonal set.

\subsection*{Discussion for the best possible $K$} 
The curvature lower bound $K=0$~in Thm.~\ref{t:intromain} does not depend on the inverse temperature~$\beta$. One might wonder if there is a positive constant~$K_\beta>0$ depending on~$\beta$ so that the sharper curvature bound $\BE(K_\beta, \infty)$ holds. {However, this is not the case when $\beta=2$ due to the absence of the spectral gap proven in \cite{Suz24}. We believe that $K=0$ is the best constant also for other $\beta>0$} because  the logarithmic potential $-\beta\log|x-y|$ cannot be $K$-convex with positive $K>0$ for any $\beta>0$, which indicates that the choice of $\beta$ could not improve the curvature bound in the infinite-volume case.

\subsection*{Outlook for further study}
 In Thm.~\ref{t:GT}, we provide a sufficient condition for the Bakry--\'Emery lower Ricci curvature bound $\BE(K,\infty)$ for a general probability measure in $\U=\U(\R)$. In particular, the laws of the sine $\beta$ ensemble, the $\beta$-Riesz ensemble, and the Poisson ensemble are included due to the recent developments regarding the Dobrushin--Lanford--Ruelle (DLR) equations. The case of the Airy ensemble remains open. The corresponding interacting diffusions are closely related to what is called the {\it Airy line ensemble}, which has a thriving development in the context of the KPZ universality. 
 From the metric geometric viewpoint, a significant open question  is whether $(\U, \tau_\mrmv, \mssd_\U, \QP)$ with $\QP=\sine_\beta$ is an $\RCD$ space, which is stronger than the $\BE$ property. This is also related to an unsolved question, whether the dual flow $\{\mcT_t^{\U, \QP}\}$ of the Dyson Brownian motion~\eqref{d:DBMS} can be realised as the $\EVI$-gradient flow in terms of the $L^2$-transportation extended distance~$\mssW_{2, \mssd_\U}$ with cost $\mssd_\U^2(\gamma, \eta)$, rather than the Benamou--Brenier-type variational extended distance~$\mssW_\E$ discussed in this paper. We do not know whether $\mssW_{2, \mssd_\U}$ coincides with $\mssW_\E$, which we expect to be true in the spirit of the Benamou--Brenier formula known for the Euclidean space. Another fundamental question is whether there exists a probability measure $\QP$ in~$\U$ supporting~$\BE(K,\infty)$ with $K>0$ and~$\QP(\U^\infty)=1$, where $\U^\infty=\{\gamma \in \U: \gamma(\R)=+\infty\}$ is  the infinite-particle configuration~space.

\subsection*{Outline of the article}
In Section~\ref{sec:pre}, the notation and the preliminary materials are presented. In Section~\ref{sec: Pre}, we discuss the lower Ricci curvature bound of finite particle systems in closed intervals, where infinitely many particles are conditioned outside the intervals. To do so,  we construct the Dirichlet forms
\begin{align}\label{e:TDFI}(\E^{\U(B_r), \QP_r^\eta}, \dom{\E^{\U(B_r), \QP_r^\eta}})
\end{align}
 on the configuration space~$\U(B_r)$ over the closed metric ball $B_r$ with radius $r>0$ centred at~$0$, whose symmetrising measure is the projected regular conditional probability $\QP_r^\eta$ on~$\U(B_r)$ conditioned at $\eta$ on the complement~$B^c_r \subset \R$. The key point for the lower Ricci curvature bound of~\eqref{e:TDFI} is {\it the geodesic convexity} of the corresponding Hamiltonian on $(\U(B_r), \bar{\mssd}_\U)$, i.e., the logarithm of the Radon--Nikod\'ym density $\Psi_{r}^{\eta}:=-\log(\diff \mu_r^\eta/\diff \pi_{\mssm_r})$, where  $\pi_{\mssm_r}$ denotes the Poisson measure  on $\U(B_r)$ having the intensity measure~$\mssm_r$, which is the Lebesgue measure restricted on~$B_r$.  This convexity is due to the following DLR equation proven in~\cite[Thm.1.1]{DerHarLebMai20}: for $\mu$-a.e.~$\eta$, there exists a unique $k=k(\eta) \in \N_0$ so that  
\begin{align*}
\diff\mu_r^\eta&
=\frac{1}{Z_{r}^{\eta}} e^{-\Psi_{r}^{k, \eta}} \diff \mssm_r^{\odot k} \comma \quad \gamma=\sum_{i=1}^k \delta_{x_i} \in \U(B_r) 
\\ 
 \Psi_{r}^{k, \eta}(\gamma) &:= - \log \Biggl(  \prod_{i<j}^k |x_i-x_j|^\beta \prod_{i=1}^k\lim_{R \to \infty}\prod_{y \in \eta_{B_r^c}, |y| \le R} \Bigl|1-\frac{x_i}{y}\Bigr|^\beta \Biggr) \comma \notag
\end{align*}
where $\mssm_r^{\odot k}$ is the $k$-symmetric product measure of $\mssm_r$ 
and $Z_{r}^{\eta}$ is the normalising constant (note that the roles of the notation~$\gamma$ and $\eta$ in \cite{DerHarLebMai20} are opposite to this article). 
In Section~\ref{sec:CI}, we prove $\BE(0,\infty)$ of $(\E^{\U, \QP}, \dom{\E^{\U, \QP}})$ in the following steps: we first construct {\it the truncated form}~$(\E_r^{\U, \QP}, \dom{\E_r^{\U, \QP}})$ on $\U$ whose gradient operator is truncated inside configurations on $B_r$ (Prop.~\ref{t:ClosabilitySecond}). We then identify it with the {\it superposition} Dirichlet form $(\bar{\E}_r^{\U, \QP}, \dom{\bar{\E}_r^{\U, \QP}})$ lifted from $(\E^{\U(B_r), \QP_r^\eta}, \dom{\E^{\U(B_r), \QP_r^\eta}})$ (Thm.~\ref{t:S=M}). By this identification, we can lift $\BE(0,\infty)$ from $(\E^{\U(B_r), \QP_r^\eta}, \dom{\E^{\U(B_r), \QP_r^\eta}})$ onto the truncated form~$(\E_r^{\U, \QP}, \dom{\E_r^{\U, \QP}})$. By the monotonicity of the form $(\E_r^{\U, \QP}, \dom{\E_r^{\U, \QP}})$ with respect to $r$ and passing to the limit $r \to \infty$, we prove $\BE(0,\infty)$ for the limit form~$(\E^{\U, \QP}, \dom{\E^{\U, \QP}})$ (Thm.~\ref{t: main}). 
\begin{figure}[htb!]
\begin{equation*} 
{ \xymatrix @C=1pc { 
&  (\bar{\E}_r^{\U, \QP}, \dom{\bar{\E}_r^{\U, \QP}}) \ \  \ar@{=}[r]^{identification} \ar@{<-}[d]^{superposition} & \ \ (\E_r^{\U, \QP}, \dom{\E_r^{\U, \QP}}) \ar@{->}[r]^{monotone}  & (\E^{\U, \QP}, \dom{\E^{\U, \QP}}) &\hspace{-12mm}\BE(0,\infty)&
\\
&(\E^{\U(B_r), \QP_r^\eta}, \dom{\E^{\U(B_r), \QP_r^\eta}}) & \hspace{-20mm}\BE(0,\infty)&   \hspace{45mm} &
}
}
\end{equation*}
\caption{{\rm The idea of the proof of $\BE(0,\infty)$}: 
{\rm $\BE(0,\infty)$ is transferred to $\E^{\U, \QP}$ via the vertical arrow by the superposition, the equality by the identification,  and the right arrow by the monotone convergence.}}
\end{figure}
As a consequence of $\BE(0,\infty)$, we obtain the integral Bochner inequality, the local Poincar\'e inequality (Cor.~\ref{t:LPS}),  the exponential decay of the transition semigroup, the $p$-Bakry--\'Emery estimate, the local log-Sobolev inequality and the local hyper-contractivity (Cor.~\ref{c:TES}, \ref{p:PBE}, \ref{c:LLSI}, \ref{c:LHC}).  
In Section~\ref{sec:LH}, we prove the dimension-free Harnack inequality, the log-Harnack inequality, the Lipschitz contraction and $L^\infty(\U, \QP)$-to-$\Lip(\U, \bar{\mssd}_\U, \QP)$ regularisation properties (Thm.~\ref{t:DFH}). 
In Section \ref{sec:GF}, we discuss the Benamou--Brenier-type variational extended distance~$\mssW_{\E}$, the evolutional variation inequality and the gradient flow property of the dual flow. 
In Section~\ref{sec:GL}, we extend these results to the case of general~$\QP$ beyond $\sine_\beta$ (Thm.~\ref{t:GT}) and discuss the  $(\beta,s)$-circular Riesz ensembles.

\subsection*{Acknowledgement}
The author expresses his great appreciation to Professor Thomas Lebl\'e for making him aware of the $\beta$-Riesz ensemble as an example of Assumption~\ref{a:GT}.
He also appreciates the anonymous referees for reading the manuscript very carefully and giving many valuable suggestions for the improvement. 
A large part of the current work has been completed while he was at Bielefeld University.  He gratefully acknowledges funding by the Alexander von Humboldt Stiftung to support his stay in Bielefeld. 
This research was partly supported by the Theoretical Sciences Visiting Program (TSVP) at the Okinawa Institute of Science and
Technology Graduate University in Japan. 

\subsection*{Data Availability Statement}
No datasets were generated or analysed during the current study.

\section{Notation and Preliminaries} \label{sec:pre}

\subsection{Numbers, Tensors, Function Spaces}
We write $\N:=\{1, 2, 3, \ldots\}$, $\N_0:=\{0, 1, 2, \ldots \}$, $\EN:=\N \cup \{+\infty\}$ and $\EN_0:=\N_0 \cup\{+\infty\}$. 
The uppercase letter $N$ is used for  $N \in \EN_0$, while the lowercase letter $n$ is used for $n \in \N_0$. 
We shall adhere to the following conventions:
\begin{itemize}
\item the superscript~${\square}^\tym{N}$  denotes $N$-fold \emph{product objects};

\item the superscript~${\square}^\otym{N}$ denotes $N$-fold \emph{tensor objects};

\item the superscript~${\square}^\osym{N}$ denotes $N$-fold \emph{symmetric tensor objects}.
\end{itemize}
Let~$(X, \tau)$ be a topological space with $\sigma$-finite Borel measure~$\nu$. If not otherwise stated, a {\it function} always means a function taking values in the real number field~$\R$ or~the extended real number field~$\R\cup\{\pm \infty\}$. We say that a function $v$ is a  {\it $\nu$-modification} of a function~$u$ if $u=v$ $\nu$-almost everywhere. For a subset~$A \subset X$, we write $\nu\mrestr{A}$ for the restriction of the measure~$\nu$ to $A$, and $u|_A$ for the restriction of the function~$u$ to $A$. 
We use the following symbols:
\begin{enumerate}[$(a)$]
\item  {$L^0(X, \nu)$ for the space of $\nu$-equivalence classes of functions $X \to \R$;} for $1 \le p \le \infty$, $L^p(X, \nu):=\{u \in L^0(X, \nu): \|u\|_{L^p(\nu)}<\infty\}$, where $\|u\|_p=\|u\|_{L^p(\nu)}:=\int_X |u|^p \diff \nu$ for $1 \le p<\infty$ and $\|u\|_{\infty}=\|u\|_{L^\infty(\nu)}:=\esssup_X u$ for $p=\infty$.  In the case $p=2$, the inner-product is denoted by $(u, v)_{L^2(\nu)}:=\int_X uv \diff \nu$. If no confusion could occur, we simply write $L^p(\nu)=L^p(X, \nu)$;

\item  $L^p_s(X^{\times k}, \nu^{\otimes k}):=\{u \in L^p(X^{\times k}, \nu^{\otimes k}): u\ \text{is symmetric}\}$, where $u$ is said to be {\it symmetric} if and only if $u(x_1, \ldots, x_k)=u(x_{\sigma(1)}, \ldots, x_{\sigma(k)})$ for every $\sigma \in \mathfrak S(k)$ in the $k$-symmetric group;

\item $C_b(X, \tau)$ for the space of bounded $\T$-continuous functions on~$X$; if $X$ is locally compact, $C_0(X, \tau)$ denotes the space of $\tau$-continuous and compactly supported  functions on $X$; $C_0^\infty(\R)$ for the space of compactly supported smooth functions on~$\R$. If no confusion could occur, we simply write $C_b(X)$ and $C_0(X)$ respectively. 

\item $\mathscr B(X)$ for the Borel $\sigma$-algebra with respect to~$\tau$; $\mathscr B(X)^{\nu}$ for the completion of $\mathscr B(X)$ with respect to~$\nu$; $\mathscr B(X)^*$ for the universal $\sigma$-algebra, i.e., the intersection of $\mathscr B(X)^{\rho}$ among all Borel probability measures $\rho$ on $X$;
A  measurable function $u: X \to \R$ with respect to $\mathscr B(X)$, $\mathscr B(X)^{\nu}$, $\mathscr B(X)^*$ is called {\it Borel measurable, $\nu$-measurable, universally measurable} respectively and denoted by $u\in \mathcal B(X), \mathcal B(X)^{\nu}, \mathcal B(X)^*$ respectively;

\item $F_\# \nu$ for the push-forward measure, i.e., $F_\# \nu(\cdot)=\nu(F^{-1}(\cdot))$ given a measurable space $(Y, \Sigma)$ and a measurable map $F: (X, \mathcal B(X)^{\nu}) \to (Y, \Sigma)$;
\item $\1_{A}$ for the indicator function on $A$, i.e., $\1_{A}(x)=1$ if and only if $x \in A$, and $\1_A(x)=0$ otherwise; $\delta_x$ for the Dirac measure at $x$, i.e., $\delta_x(A)=1$ if and only if $x \in A$, and $\delta_{x}(A)=0$ otherwise;
\item $\square_+$ for a subspace of  nonnegative functions from $X$ to $\R$. For instance, $C_{b,+}(X):=\{u \in C_b(X): u \ge 0\}$. 
\end{enumerate}

\subsection{Dirichlet form} \label{ss:DF}
We refer the reader to \cite{MaRoe90, BouHir91, FukOshTak11, CheFuk11} for comprehensive references regarding materials presented in this subsection. Throughout this article, a Hilbert space always means a separable Hilbert space with inner product~$(\cdot, \cdot)_H$ taking value in $\R$. 

\paragraph{Closed form}
Let $H$ be a Hilbert space and $\dom{Q}$ be a dense linear subspace in~$H$. We call a pair $(Q, \dom{Q})$  {\it symmetric form} or simply {\it form} if it is  a non-negative definite symmetric bilinear form~$Q: \dom{Q}\times \dom{Q} \to \R$, i.e., $Q(u, v)=Q(v, u)$, $Q(u+v, w)=Q(u, w)+Q(v, w)$, $Q(\alpha u, v)=\alpha Q(u, v)$ and $Q(u, u) \ge 0$ for $u, v, w \in \dom{Q}$ and $\alpha \in \R$.  Set
\begin{align*}
Q(u)\eqdef Q(u,u) \comm \qquad Q_\alpha(u,v)\eqdef Q(u,v)+\alpha(u, v)_H\comm \alpha>0\fstop
\end{align*}
The form~$(Q, \dom{Q})$ is  \emph{closed} if the space $\dom{Q}$ endowed with the norm
\begin{align}\label{d:FN}
\norm{\emparg}_{\dom{Q}}:= Q_1(\emparg)^{1/2}=\sqrt{Q(\emparg)+\norm{\emparg}_{L^2(\nu)}^2} 
\end{align}
 is a Hilbert space. We say that $(Q, \dom{Q})$ is {\it closable} if for $u_n \in \dom{Q}$, 
\begin{align*}
Q(u_n-u_m) \xrightarrow{n, m \to \infty} 0 \cquad \|u_n\|_{H} \xrightarrow{n \to \infty}0  \quad \implies \quad Q(u_n) \xrightarrow{n \to \infty}0 \fstop
\end{align*}
If $(Q, \dom{Q})$ is closable, there exists the smallest closed extension (also called {\it closure}) $(Q', \dom{Q'})$ of  $(Q, \dom{Q})$, i.e., {$(Q', \dom{Q'})$ is the smallest form satisfying that $\dom{Q} \subset \dom{Q'}$, $Q'=Q$ on $\dom{Q}^{\times 2}$ and $(Q', \dom{Q'})$ is closed.}

\paragraph{Generator, semigroup, and resolvent} 
Let $(Q, \dom{Q})$ be a closed symmetric form on a Hilbert space~$H$.  {\it The infinitesimal generator $(A, \dom{A})$}  is the unique densely defined self-adjoint operator on $H$ satisfying the following integration-by-parts formula:
$$-(u, Av)_{H}=Q(u, v) \cquad u \in \dom{Q},\ v\in \dom{A} \fstop$$
{\it The resolvent operator} $\{{G}_\alpha\}_{\alpha \ge 0}$ is the unique bounded linear operator on $H$ satisfying 
$$Q_\alpha({G}_\alpha u, v) = (u, v)_{H} \cquad  u \in H \quad v \in \dom{Q} \fstop$$
{\it The semigroup} $\{T_t\}_{t \ge 0}$ is the unique bounded linear operator on $H$ satisfying 
$$G_\alpha u = \int_{0}^\infty e^{-\alpha t} T_{t}u \diff t \cquad u \in H\fstop$$
The semigroup $\{T_t\}_{t \ge 0}$ has the following contraction properties (see, e.g., \cite[\S1.3, p.16 \& Lem.~1.3.3]{FukOshTak11}): for every $t>0$
\begin{align}\label{e:con}
\|T_t u\|_{H} \le \|u\|_H \quad u \in H  \comma \quad Q(T_tu) \le Q(u) \quad u \in \dom{Q}\fstop
\end{align}

\paragraph{Dirichlet form}
Let $(X, \Sigma, \nu)$ be a $\sigma$-finite measure space. 
A closed symmetric form~$(Q,\dom{Q})$ on~$L^2(\nu)$ is a symmetric {\it Dirichlet form} if it  satisfies the Markovian property (also called {\it sub-Markovian property})
\begin{align*}
u_0\eqdef 0\vee u \wedge 1\in \dom{Q} \quad \text{and} \quad Q(u_0)\leq Q(u)\comm \quad u\in\dom{Q}\fstop
\end{align*}
It is equivalent to the following property: $0 \le T_t u \le 1$ whenever $0 \le u \le 1$, see, e.g., \cite[Thm.~1.4.1]{FukOshTak11}. In this case,  the contraction \eqref{e:con} extends to the $L^p$ space for all $1 \le p \le \infty$:
\begin{align}\label{e:con1}
\|T_t u\|_{L^p} \le \|u\|_{L^p} \quad u \in L^p(\nu)  \fstop
\end{align}
See, e.g., \cite[Thm.~1.3.3]{Dav89}. 
To distinguish Dirichlet forms defined in different base spaces with different reference measures, we often use the notation $Q^{X, \nu}$ to specify the base space $X$ and the reference measure $\nu$. 

\paragraph{Square field} A symmetric Dirichlet form $(Q, \dom{Q})$ {\it admits a square field $\cdc$} if there exists a dense subspace $L \subset \dom{Q} \cap L^\infty(\nu)$ with respect to the norm~$\|\cdot\|_{\dom{Q}}$ such that the following property holds: for every $u \in L$, there exists $v \in L^1(\nu)$ so that 
$$2Q(uh, u) -Q(h, u^2) = \int_X h v \diff \nu \cquad  h \in \dom{Q} \cap L^\infty(\nu) \fstop$$
Such $v$ is denoted by $\Gamma(u)$. The square field $\Gamma$ can be uniquely extended to an operator on $\dom{Q} \times \dom{Q} \to L^1(\nu)$ (\cite[Thm.\ I.4.1.3]{BouHir91}).

\paragraph{Quasi-notion} Let $(X, \tau)$ be a Polish space (i.e., separable and metrisable by some complete distance), $\nu$ be a $\sigma$-finite Borel measure on $X$ and $(Q, \dom{Q})$ be a symmetric Dirichlet form on $L^2(\nu)$.
For~$A\in\mathscr B(X)$, we define
$$\dom{Q}_A\eqdef \set{u\in \dom{Q}: u= 0 \ \text{$\nu$-a.e. on}\ X\setminus A}.$$
A sequence $\seq{A_n}_{n \in \N}\subset \mathscr B(X)$ is a \emph{Borel nest} if $\cup_{n \in \N} \dom{Q}_{A_n}$ is dense in~$\dom{Q}$.
A Borel nest~$\seq{A_n}_{n \in \N}$ is \emph{closed (resp.~compact)} if $A_n$ is closed (resp.~compact) for every $n \in \N$. 
A set~$N\subset X$ is \emph{exceptional} if there exists a closed nest~$\seq{F_n}_{n  \in \N}$ such that~$N\subset X\setminus \cup_n F_n$. 
A property~$(p_x)$ for $x\in X$ holds \emph{quasi-every~$x$} (in short: q.e.~$x$) if there exists an exceptional set~$N$ so that~$(p_x)$ holds for every~$x\in X\setminus N$. For a closed nest~$\seq{F_n}_{n \in \N}$, we define
$$C\bigl(\seq{F_n}_{n \in \N}\bigr):=\{u: A \to \R: \cup_{n \ge 1} F_n  \subset A \subset X,\ u|_{F_n} \ \text{is continuous for every $n \in \N$}\} \fstop$$
A function $u$ defined quasi-everywhere on $X$ is {\it quasi-continuous} if there exists a closed nest $\seq{F_n}_{n \in \N}$ so that $u \in C\bigl(\seq{F_n}_{n \in \N}\bigr)$.  

\paragraph{Quasi-regularity} Now we recall a key property, which connects Dirichlet forms and Markov processes. 
The form~$(Q, \dom{Q})$ is {\it quasi-regular} if the following conditions hold:
\begin{enumerate}[{\rm (QR1)}]
\item there exists a compact nest $(A_n)_{n \in \N}$; 
\item there exists a dense subspace $\mathcal D \subset \dom{Q}$ so that every $u \in \mathcal D$ has a quasi-continuous~$\nu$-modification~$\tilde{u}$;
\item there exists $\{u_n: n \in \N\} \subset \dom{Q}$ and an exceptional set $N \subset X$ so that every $u_n$ has a quasi-continuous~$\nu$-modification~$\tilde{u}_n$ and $\{\tilde{u}_n: n \in \N\}$ separates points in~$X \setminus N$.
\end{enumerate}
If $(Q, \dom{Q})$ is quasi-regular, there exists a continuous-time strong Markov process  on~$X$ 
whose transition semigroup coincides with the $L^2$-semigroup of $(Q, \dom{Q})$ quasi-everywhere (see, \cite[Thm.~3.5 p.103]{MaRoe90}). 
The form~$(Q, \dom{Q})$ is called {\it regular} if $X$ is 
a locally compact separable metric space, and $C_0(X) \cap \dom{Q}$ is dense in $(\dom{Q}, Q_1)$ as well as in $C_0(X)$ with respect to the uniform topology.

\paragraph{Locality} Let $(X, \Sigma, \nu)$ be a $\sigma$-finite measure space and let $(Q, \dom{Q})$ be a symmetric Dirihclet form on $L^2(\nu)$. 
The form~$(Q, \dom{Q})$ is called {\it strongly local} if 
$$Q(u, v)=0 \quad \text{whenever} \quad u, v \in \dom{Q} \comma u(v-c)=0 \ \text{on} \ X \ \text{for some constant $c \in \R$} \fstop$$
Note that this definition does not require any topology in $X$. If $(Q, \dom{Q})$ is regular under some topology and distance in~$X$, this definition is equivalent to the standard definition: $Q(u, v)=0$ whenever $u, v \in \dom{Q}$ have compact support and $v$ is a constant on a neighbourhood of the support of $u$, see~\cite[Thm.~2.4.3]{CheFuk11}. If $(Q, \dom{Q})$ is quasi-regular and strongly local, the corresponding continuous-time strong Markov process has continuous trajectories (i.e., a diffusion process) and has no killing inside~$X$, see~\cite[Rmk.~2.4.4 and Thm.~4.3.4]{CheFuk11}.

\subsection{Extended metric space}  \label{s:MS} 
Let~$X$ be a non-empty set. A function $\mssd\colon X^\tym{2}\rar [0,+\infty]$ is an \emph{extended distance} if it is symmetric, satisfying the triangle inequality, and does not vanish outside the diagonal in~$X^{\tym{2}}$, i.e.~$\mssd(x,y)=0$ iff~$x=y$; a \emph{distance} if it maps~$X^{\times 2} \to [0,+\infty)$. 
Let~$x_0\in X$ and~$r\in [0,+\infty)$. We write~$B_r(x_0)\eqdef \set{\mssd_{x_0} \le r}$, where $\mssd_{x_0}:=\mssd(x_0, \cdot)$. 
A space~$X$ equipped with an extended distance (resp.~ a distance) is called an {\it extended metric space} (resp. a {\it metric space})~$(X, \mssd)$. The topology induced by $\mssd$ is denoted by $\tau_\mssd$. We write $x \sim y$ if $\mssd(x,y)<+\infty$, which provides an equivalence relation in $X$. We denote by $\tilde{X}$ the quotient of $X$ with respect to the equivalence relation $\sim$. The space $X$ is decomposed as the disjoint union $X:=\sqcup_{[x] \in \tilde{X}}X_{[x]}$, where $X_{[x]}$ is the space represented by the element $[x] \in \tilde{X}$. Namely, $X_{[x]}:=\{y \in X: \mssd(y, x)<+\infty\}$, which is a metric space. We say that $(X, \mssd)$ is {\it complete} if each component $X_{[x]}$ is complete  as a metric space for every $[x] \in \tilde{X}$.

\paragraph{Lipschitz algebra} A function~$f\colon X\rar \R$ is {\it $\mssd$-Lipschitz} if there exists a constant~$L \ge 0$ so that
\begin{align}\label{eq:Lipschitz}
\tabs{ u(x)-u(y)}\leq L\, \mssd(x,y) \comm \qquad x,y\in X \fstop
\end{align}
The smallest constant~$L$ satisfying~\eqref{eq:Lipschitz} is the (global) \emph{Lipschitz constant of $u$}, denoted by~$\Lip_\mssd{(u)}$.
We write~$\Lip(X,\mssd)$ (resp.~$\bLip(X,\mssd)$) for the space of $\mssd$-Lipschitz functions (resp.\ bounded $\mssd$-Lipschitz functions) on~$X$. 
If no confusion could occur, we simply write 
\begin{align}\label{d:LS}
\Lip(\mssd)=\Lip(X,\mssd)\comma\quad \ \bLip(\mssd)= \bLip(X,\mssd)\fstop
\end{align}
For a given measure $\nu$ on a $\sigma$-algebra $\Sigma$ in $X$ (not necessarily the Borel $\sigma$-algebra $\mathscr B(X, \tau_{\mssd})$), we set 
\begin{align}\label{d:LS2}
\Lip(X, \mssd, \nu)&:=\{u \in \Lip(\mssd): \text{$u$ is $\Sigma^\nu$-measurable}\} \comma
\\
{\Lip_b(X, \mssd, \nu)}&:=\{u \in \Lip(X, \mssd, \nu): \text{$\|u\|_{L^\infty(\nu)}$}<+\infty\}  \comma
\end{align}
where $\Sigma^\nu$ is the completion of the $\sigma$-algebra~$\Sigma$ with respect to~$\nu$.
Similarly, we simply write $\Lip(\mssd, \nu)=\Lip(X, \mssd, \nu)$ and $\Lip_b(\mssd, \nu)=\Lip_b(X, \mssd, \nu)$ if no confusion could occur.

\paragraph{Absolutely continuous curve}
Let $(X, \mssd)$ be an extended metric space, $\tau_\mssd$ be the topology induced by $\mssd$,  and $J \subset \R$ be an open interval. A continuous map $\rho : J \to (X, \tau_\mssd)$ is {\it $p$-absolutely continuous} and denoted by $\rho=(\rho_t)_{t \in J} \in AC^p(J, (X, \mssd))$ if there exists $g \in L^p(J, \diff x)$ so that 
\begin{align}\label{d:ABC}
\mssd(\rho_s, \rho_t) \le \int_s^t g(r) \diff r \cquad s, t \in J \quad s<t \fstop
\end{align}
If $p=1$, we simply say that $\rho$ is {\it absolutely continuous} and denoted by $\rho \in AC(J, (X, \mssd))$. The minimal $g$ among those satisfying~\eqref{d:ABC} exists and this is identical to 
\begin{align}\label{d:MD}
|\dot\rho_t| :=\lim_{s \to t}\frac{\mssd(\rho_s, \rho_t)}{|s-t|} \cquad \text{the limit exists in a.e.~$t \in J$}\comma
\end{align}
which is called {\it metric speed, or metric derivative of $\rho$}. Namely, $|\dot\rho_t|$ satisfies \eqref{d:ABC} and $|\dot\rho_t| \le g(t)$ for a.e.~$t \in J$ for every $g$ satisfying~\eqref{d:ABC}, see \cite[Thm.~1.1.2]{AmbGigSav08}. We say that an absolutely continuous curve $\rho$ is of {\it constant speed} if $|\dot\rho_t|$ is a constant for a.e. $t \in J$.

\paragraph{Geodesic space} Let $(X, \mssd)$ be an extended metric space. We say that $\rho=(\rho_t)_{t \in [0,1]}$ is {\it a constant speed geodesic} connecting $x_0$ and $x_1$ with $\mssd(x_0, x_1)<+\infty$ if $\rho_0=x_0$, $\rho_1=x_1$ and 
\begin{align}\label{d:Geo1}
\mssd(\rho_t, \rho_s) = |t-s|\mssd(x_0, x_1) \cquad  s, t \in [0,1] \fstop
\end{align}
We denote by $\mathsf{Geod}(X, \mssd)$ the space of constant speed geodesics on $(X, \mssd)$. We say that $(X, \mssd)$ is a {\it geodesic} extended metric space if for every~$x_0, x_1 \in X$ with $\mssd(x_0, x_1)<+\infty$, there exists at least one $\rho=(\rho_t)_{t \in [0, 1]} \in \mathsf{Geod}(X, \mssd)$ connecting $x_0$ and $x_1$.  

\paragraph{Geodesic convexity} Let $(X, \mssd)$ be a {geodesic} extended metric space. We say that the function~$U: X \to \R\cup\{+\infty\}$ is {\it $K$-geodesically convex} with $K \in \R$ if for every $x_0, x_1 \in \dom{U}:=\{x \in X: U(x) \in \R\}$ and $\mssd(x_0, x_1)<+\infty$, there exists a constant speed geodesic $\rho=(\rho_t)_{t \in [0,1]}$ with $\rho_0=x_0$, $\rho_1=x_1$ and 
$$U(\rho_t)\le (1-t)U(\rho_0)+tU(\rho_1)-\frac{K}{2}t(1-t)\mssd^2(x_0, x_1) \quad  t \in [0,1] \fstop$$
When $K=0$, we say that $U$ is {\it geodesically convex}.


\paragraph{Slope}
Let~$(X, \mssd)$ be an extended metric space and $u: X \to \R\cup\{\pm \infty\}$ be a function. For $x \in \dom{u}=\{x \in X: u(x) \in \R\}$,  {\it the slope of $u$ at $x$} is defined as 
\begin{align} \label{d:SLP}
|\mathsf D_{\mssd}u|(x):=
\begin{cases} \displaystyle
\limsup_{y \to x}\frac{|u(y)-u(x)|}{\mssd(y, x)} &\text{if $x$ is not isolated}; 
\\
0 &\text{otherwise} \fstop
\end{cases}
\end{align}
It is straightforward to see 
\begin{align} \label{e:SLC}
|\mathsf D_{\mssd}u| \le \Lip_{\mssd}(u) \cquad u \in \Lip(X, \mssd) \fstop
\end{align}
{\it The ascending slope} and {\it the descending slope} are defined correspondingly as 
\begin{align} \label{d:USL}
|\mathsf D_{\mssd}^+u|(x):=
\begin{cases} \displaystyle
\limsup_{y \to x}\frac{(u(y)-u(x))^+}{\mssd(y, x)} &\text{if $x$ is not isolated}; 
\\
0 &\text{otherwise} \fstop
\end{cases}
\end{align}
\begin{align} \label{d:DLP}
|\mathsf D_{\mssd}^-u|(x):=
\begin{cases} \displaystyle
\limsup_{y \to x}\frac{(u(y)-u(x))^-}{\mssd(y, x)} &\text{if $x$ is not isolated}; 
\\
0 &\text{otherwise} \fstop
\end{cases}
\end{align}

\subsection{Cheeger energy} \label{sec:Ch}
We say that $(X, \mssd, \nu)$ is a {\it metric measure space} if $(X,\mssd)$ is a complete metric space with $\tau_\mssd$ separable, $\nu$ is a Radon measure on $\mathscr B(\tau_\mssd)$ such that the topological support of $\nu$ is the whole space~$X$ and $\nu(X)<+\infty$. 
Let $(X, \mssd, \nu)$ be a metric measure space.  According to~\cite[Dfn.~4.2]{AmbGigSav14}, we say that $G \in L^2(\nu)$ is a {\it relaxed gradient of $u \in L^2(\nu)$} if there exists $u_n \in \Lip(\mssd) \cap L^2(\nu)$ and $\tilde G \in L^2(\nu)$ so that 
\begin{enumerate}[(a)]
\item $u_n \to u$ strongly in $L^2(\nu)$ and $|\mathsf D_{\mssd}u_n| \to \tilde G$ weakly in $L^2(\nu)$;
\item $\tilde G \le G$ $\nu$-a.e..
\end{enumerate} 
We say that $G$ is the {\it minimal relaxed gradient of $u$} if its $L^2(\nu)$-norm is minimal among all relaxed gradients. We denote by $|\nabla_{\mssd, \nu} u|_*$ the {minimal relaxed gradient of $u$}. By~\cite[(4.9)]{AmbGigSav14} and \eqref{e:SLC},  we have 
\begin{align}\label{i:MSL}
|\nabla_{\mssd, \nu} u|_* \le |\mathsf D_\mssd u| \le \Lip_{\mssd}(u) \quad \text{$\QP$-a.e.}\cquad u\in \Lip_b(\mssd)\fstop
\end{align}
The Cheeger energy $\Ch^{\mssd, \nu}: L^2(\nu) \to \R \cup\{+\infty\}$
 is defined as 
  \begin{align*}
 \Ch^{\mssd, \nu}(u)&:=\frac{1}{2}\int_X |\nabla_{\mssd, \nu} u|_*^2 \diff \nu\comma
\end{align*}
and set $\Ch^{\mssd, \nu}(u)=+\infty$ if $u$ has no relaxed slope. The domain is denoted by $W^{1,2}(X, \mssd, \nu):=\{u \in L^2(\nu): \Ch^{\mssd, \nu}(u)<+\infty\}$. The functional $\Ch^{\mssd, \nu}$ is convex and lower semi-continuous in~$L^2(\nu)$ (\cite[Thm.~4.5]{AmbGigSav14}). According to the definition of $|\nabla_{\mssd, \nu} u|_*$ and the $L^2$-strong approximation of the minimal relaxed slope~\cite[(c) Lem.~4.3]{AmbGigSav14}, we have
 \begin{align*}
 \Ch^{\mssd, \nu}(u)&=\frac{1}{2}\inf \biggl\{ \liminf_{n \to \infty} \int_{X} |\mathsf D_{\mssd} u_n|^2 \diff \nu:\ u_n \in \Lip_b(\mssd) \xrightarrow{L^2} u \biggr\} \fstop
\end{align*}
Therefore, by, e.g., \cite[Prop.~3.12]{FonLeo07}, $\Ch^{\mssd, \nu}$ is the lower semi-continuous envelope of the functional~
\begin{align}\label{d:ISL}\displaystyle
\mathsf E^{\mssd, \nu}(u):= 
\begin{cases}
\frac{1}{2}\int_{X} |\mathsf D_\mssd u|^2 \diff \nu \quad &u \in \Lip_b(\mssd);
\\
+\infty \quad &\text{otherwise} \fstop
\end{cases}
\end{align}
Namely, for every $u \in L^2(\nu)$, 
\begin{align}\label{d:lse}
\Ch^{\mssd, \nu}(u)=\sup\biggl\{H(u): \ &H: L^2(\nu) \to [-\infty, +\infty] \  \text{is lower semi-continuous}
\\
& \text{and} \  H\le\mathsf E^{\mssd, \nu} \biggr\} \fstop \notag
\end{align}

\subsection{Riemannian Curvature-dimension condition} \label{ss:RCD}
Let $(X, \mssd, \nu)$ be a metric measure space. The following definition is one of the equivalent characterisations of {\it an $\RCD(K,\infty)$ space}, see~\cite[Cor.\ 4.18]{AmbGigSav15}.
We say that $(X, \mssd, \nu)$ satisfies the {\it Riemannian Curvature-Dimension Condition} $\RCD(K,\infty)$ with $K \in \R$ if 
\begin{enumerate}[{\rm (i)}]
\item $\Ch^{\mssd, \nu}$ is quadratic, i.e., 
$$\Ch^{\mssd, \nu}(u+v)+\Ch^{\mssd, \nu}(u-v)= 2\Ch^{\mssd, \nu}(u)+2\Ch^{\mssd, \nu}(v) \cquad u, v \in W^{1,2}(X, \mssd, \nu) \fstop$$
\item Sobolev-to-Lipschitz property holds, i.e., every $u \in W^{1,2}(X, \mssd, \nu)$ with $|\nabla_{\mssd, \nu} u|_* \le 1$ has a $\mssd$-Lipschitz $\nu$-modification~$\tilde{u}$ with $\Lip_\mssd(\tilde{u}) \le 1$;
\item $\Ch^{\mssd, \nu}$ satisfies $\BE(K,\infty)$, i.e., 
$$|\nabla_{\mssd, \nu} T_t u|^2_* \le e^{-2Kt} T_t|\nabla_{\mssd, \nu} u|^2_* \cquad u \in W^{1,2}(X, \mssd, \nu) \comma t>0 \comma$$
 {where $(T_t)_{t>0}$ is  the $L^2$-gradient flow  associated with~$(\Ch^{\mssd, \nu}, W^{1,2}(X, \mssd, \nu))$, see \cite[(4.24)]{AmbGigSav14}.}
\end{enumerate}
In this case, the Cheeger energy $(\Ch^{\mssd, \nu}, W^{1,2}(X, \mssd, \nu))$ is a strongly local symmetric Dirichlet form (\cite[\S 4.3]{AmbGigSav14b}), and the $L^2$-gradient flow $\{T_t\}_{t>0}$ coincides with the $L^2$-semigroup associated with the Dirichlet form~$(\Ch^{\mssd, \nu}, W^{1,2}(X, \mssd, \nu))$. We note that, while \cite[Cor.\ 4.18]{AmbGigSav15} is stated in terms of {\it the minimal weak upper gradient} denoted by $|\nabla\cdot|_w$, it is identical to the minimal relaxed slope $|\nabla_{\mssd, \nu} \cdot|_*$ due to \cite[Thm.~6.2]{AmbGigSav14}.

\subsection{Configuration space}
A \emph{configuration} on a locally compact Polish space~$X$ is any $\overline\N_0$-valued Radon measure~$\gamma$ on~$X$, which can be expressed by $\gamma = \sum_{i=1}^N \delta_{x_i}$ for $N \in \overline{\N}_0$, where $x_i \in X$ for every $i$ and  $\gamma \equiv 0$ if $N=0$.
The \emph{configuration space}~$\U=\dUpsilon(X)$ is the space of all configurations over~$X$. The space~$\dUpsilon$ is endowed with the vague topology~$\tau_\mrmv$, i.e., the topology  by the duality of the space $C_0(X)$ of compactly supported continuous functions. Throughout this paper, the Borel $\sigma$-algebra~$\mathscr B(\U, \tau_\mrmv)$ is written simply as~$\mathscr B(\U)$. We write the {\it restriction}~$\gamma_A\eqdef \gamma\mrestr{A}$ as a measure on a Polish subspace~$A \subset X$ and the corresponding restriction map is denoted by 
\begin{align}\label{eq:ProjUpsilon}
\pr_A\colon \dUpsilon\longrar \dUpsilon(A)\colon \gamma\longmapsto \gamma_{A}\fstop
\end{align}
The $N$-particle configuration space over~$A$ is denoted by
\begin{equation*}
\begin{aligned}
\dUpsilon^N(A)\ \eqdef&\ \set{\gamma\in \dUpsilon(A): \gamma(A)=N}\comma
\end{aligned}
\quad N\in\overline\N_0 \fstop
\end{equation*}
If $A=X$, we simply write~$\U^N=\U^N(X)$. 
Let $\mathfrak S_k$ be the $k$-symmetric group for $k \in \N_0$. It can be readily seen that the $k$-particle configuration space~$\U^k(A)$ is {\it isomorphic} as a topological space to the quotient space~$A^{\times k}/\mathfrak S_k$ endowed with the quotient topology:
\begin{align} \label{e:STS}
\dUpsilon^k(A)\cong A^{\odot k}:=A^{\times k}/\mathfrak S_k \comma \quad k \in \N\fstop
\end{align}
The associated projection map from the product space~$A^{\times k}$ to the quotient space~$A^{\times k}/\mathfrak S_k$ is denoted by~$\quot_k$.

\paragraph{Conditional probability}
Let $(X,  \mssd)$ be a locally compact separable complete metric space. Let $B_r=B_r(x_0)=\{y \in X: \mssd(x_0, y) \le r\}$ be the closed ball with radius~$r$ centred at some fixed point~$x_0 \in X$. For $\eta \in \U=\U(X)$ and $r>0$, we set 
\begin{align} \label{e:CES}
\U_r^\eta:=\{\gamma \in \U: \gamma_{B_r^c}=\eta_{B_r^c}\} \fstop
\end{align}
Let $\mu$ be a Borel probability measure on $\U$ and $\QP_{B_r^c}:={\pr_{B_r^c}}_\#\QP$. According to e.g.,~\cite[452E, 452O, 452G(c)]{Fre00}, there exists a family of Borel probability measures~$\{\QP_{r}^\eta: r>0, \eta \in \U(B_r^c)\}$ on $\U$ so that 
\begin{enumerate}[(a)]
\item (disintegration) for  every $\Xi \in \mathscr B(\U)^\QP$
$$\QP(\Xi)= \int_{\U(B_r^c)} \QP_{r}^\eta(\Xi) \diff \QP_{B_r^c}(\eta)\ ;$$ 
\item (strong consistency) for every  $\Xi \in \mathscr B(\U)^\QP$ and $\Lambda \in  \mathscr B(\U(B_r^c))^{\QP_{B_r^c}}$, 
$$\QP(\Xi \cap \Lambda)= \int_{\Lambda} \QP_{r}^\eta(\Xi) \diff \QP_{B_r^c}(\eta) \quad \text{and} \quad \QP_{r}^\eta(\U_r^\eta)=1 \quad \QP_{B_r^c}\text{-a.e.~$\eta$}\fstop$$ 
\end{enumerate}
We call $\{\QP_{{r}}^{\eta}: r>0, \eta \in {\U(B_r^c)}\}$ {\it strongly consistent regular conditional probability measures}. 
\begin{rem}\label{r:CP}
We may think of $\QP_r^\eta$ as a Borel probability measure on $\U(B_r)$ instead of~$\U$. Indeed, thanks to the strong consistency, the projection~$\pr_{B_r}:\U_r^\eta \to \U(B_r)$ with its inverse $\pr_{B_r}^{-1}: \U(B_r) \to \U_r^\eta$ defined as $\gamma \mapsto \gamma+\eta$ gives a bi-measure preserving bijection map between the two measure spaces 
\begin{align} \label{r:BMP} 
(\U_r^\eta, \QP_r^\eta) \cong (\U(B_r), {\pr_{B_r}}_\# \QP_r^\eta) \fstop
\end{align}
Throughout this paper, we identify  $\QP_r^\eta$ with ${\pr_{B_r}}_\# \QP_r^{{\eta_{B_r^c}}}$ and regard~$\QP_r^\eta$ as a probability measure on $\U(B_r)$ indexed by $\eta \in \U$ and $r>0$. 
\end{rem}

\paragraph{Disintegration formulas} For a function $ u\colon \dUpsilon\to \R$, $r>0$ and~$\eta \in \dUpsilon$, we set 
 \begin{align} \label{e:SEF}
u_{r}^\eta(\gamma)\eqdef  u(\gamma+\eta_{B_r^\complement})  \qquad \gamma\in \dUpsilon(B_r) \fstop
 \end{align}
{By  the property (a) of the conditional probability and the identification~\eqref{r:BMP}}, it is straightforward to see that for every~$u \in L^1(\mu)$, 
\begin{align} \label{p:ConditionalIntegration}
\int_{\dUpsilon} u \diff\QP = \int_{\dUpsilon} \quadre{\int_{\dUpsilon(B_r)} u_{r}^\eta \diff \QP^\eta_r }\diff\QP(\eta) \fstop
\end{align}
For a $\QP$-measurable set~$\Omega \in \mathscr B(X)^{\QP}$, define a {\it section}~$\Omega_r^\eta \subset \U(B_r)$ at~$\eta \in \U$ on~$B_r^c$ by
 \begin{align} \label{e:SEF2}
 \Omega_r^\eta:=\{\gamma \in \U(B_r): \gamma+\eta_{B_r^c} \in \Omega\} \fstop
 \end{align}
 By applying the disintegration formula~\eqref{p:ConditionalIntegration} to $u=\1_{\Omega}$, we obtain
\begin{align} \label{p:ConditionalIntegration2}
\QP(\Omega)=\int_{\U} \QP_r^\eta(\Omega_r^\eta) \diff \QP(\eta) \fstop
\end{align}

\paragraph{Intensity measure} For a Borel probability measure~$\QP$ on $\U=\U(X)$, the {\it intensity measure} $I_{\QP}$ is a Borel measure on~$X$ defined as 
\begin{align} \label{d:IS}
I_\QP(A):=\int_{\U} \sum_{x \in \gamma }\1_A(x)\diff \QP(\gamma) \cquad  A \in \mathscr B(X)\fstop
\end{align}
For $c \in \R^n$, we define the translation $T_c: \U(\R^n) \to \U(\R^n)$ by $\gamma=\sum_{x \in \gamma}\delta_x \mapsto \gamma+c:=\sum_{x \in \gamma}\delta_{x+c}$. A Borel probability measure~$\mu$ on $\U(\R^n)$ is called {\it translation-invariant (or stationary)} if 
\begin{align} \label{d:TI}
(T_c)_\#\mu=\mu \cquad c \in \R^n \fstop
\end{align}
If $\QP$ is translation-invariant, there exists a constant $c_\QP \in \R_+\cup\{+\infty\}$ such that the intensity measure is the Lebesgue measure in~$\R^n$ multiplied by $c_\QP$.
The law $\sine_\beta$ is translation-invariant and $c_\QP=\frac{1}{2\pi}$ for every~$\beta>0$, see \cite{ValVir09}.

\paragraph{Poisson measure} Let~$\nu$ be a Radon measure on~$X$ with~$\nu(X)<+\infty$.  {\it The Poisson measure} $\pi_{\nu}$ on~$\U=\U(X)$ with intensity~$\nu$ is defined in terms of the symmetric tensor measure $\nu^{\odot k}$ as follows:
\begin{align} \label{d:PS}
\pi_{\nu}(\cdot):=e^{-\nu(X)}\sum_{k=0}^\infty \nu^{\odot k}(\cdot)=e^{-\nu(X)}\sum_{k=0}^\infty \frac{1}{k!}(\quot_k)_\#\nu^{\otimes k}(\cdot) \comma
\end{align}
where $\nu^{\odot 0}=\delta_{0}$ is the Dirac measure on the element~$\gamma\equiv 0$.
When $\nu(X)=+\infty$ and $\nu(B)<+\infty$ for every bounded Borel set~$B$, {\it the Poisson measure $\pi_{\nu}$ on~$\U$ with intensity~$\nu$} is defined as the projective limit of $\{\pi_{\nu_{B}}: B \in \mathscr B(X) \ \text{bounded}\}$ with the projection~$\pr_{B}: \U \to \U(B)$ defined as $\gamma \mapsto \pr_B(\gamma)=\gamma_B$, i.e., $\pi_\nu$ is the unique Borel probability measure on~$\U$ such that 
$$(\pr_{B})_{\#} \pi_{\nu}:=\pi_{\nu_{B}} \cquad \text{for every bounded~$B\in\mathscr B(X)$} \comma$$
where $\nu_{B}:=\nu\mrestr{B}$ is the restriction of the measure $\nu$ on $B$.

\subsection{Extended distances in $\U$}
We introduce an extended distance $\bar\mssd_\U$ called {\it $L^2$-transportation-type distance} on the configuration space~$\U$. 

\paragraph{$L^2$-transportation-type distance} Let $(X, \mssd)$ be a locally compact complete separable metric space. For~$i=1,2$ let~$\proj_i\colon X^{\times 2}\rar X$ denote the projection to the~$i^\text{th}$ coordinate for $i=1,2$. 
For~$\gamma,\eta\in \dUpsilon$, let~$\Cpl(\gamma,\eta)$ be the set of all couplings of~$\gamma$ and~$\eta$, i.e., 
\begin{align*}
\Cpl(\gamma,\eta)\eqdef \set{\cpl\in \Meas(X^{\tym{2}}) \colon (\proj_1)_\pfwd \cpl =\gamma \comma (\proj_2)_\pfwd \cpl=\eta} \fstop
\end{align*}
Here $\Meas(X^{\tym{2}})$ denotes the space of all Radon measures on $X^{\tym{2}}$. 
The \emph{$L^2$-transportation}  \emph{extended distance} on~$\dUpsilon(X)$ is
\begin{align}\label{eq:d:W2Upsilon}
\mssd_{\dUpsilon}(\gamma,\eta)\eqdef \inf_{\cpl\in\Cpl(\gamma,\eta)} \paren{\int_{X^{\times 2}} \mssd^2(x,y) \diff\cpl(x,y)}^{1/2}\comma \qquad \inf{\emp}=+\infty \fstop
\end{align}
We introduce a variant of the $L^2$-transportation extended distance, called \emph{$L^2$-transportation-type}  \emph{extended distance}~$\bar{\mssd}_\U$ defined as 
\begin{align} \label{eq:dW2L}
\bar{\mssd}_\U(\gamma, \eta):=
\begin{cases}
\mssd_\U(\gamma, \eta) \quad &\text{if $\gamma_{B_r^c}=\eta_{B_r^c}$ for some $r>0$\ ,}
\\
+\infty \quad  &\text{otherwise} \fstop
\end{cases}
\end{align}
By definition, $\mssd_\U \le \bar{\mssd}_\U$ on $\U$,  and  $\mssd_\U = \bar{\mssd}_\U$ on $\U(B_r)$ for every~$r>0$. In particular, we have the following relation regarding the space of Lipschitz functions:
\begin{align} \label{e:LLR}
\Lip(\U, \mssd_\U) \subset \Lip(\U, \bar{\mssd}_\U) \comma \quad \Lip_{\bar{\mssd}_\U}(u) \le  \Lip_{\mssd_\U}(u)\comma  \quad u \in \Lip(\U, \mssd_\U)\comma
\end{align}
where $\Lip(\U, \mssd_\U)$ (resp.~$\Lip(\U, \bar{\mssd}_\U)$) denotes the space of Lipschitz functions with respect to~$\mssd_\U$ (resp.~$\bar\mssd_\U$) and $\Lip_{{\mssd}_\U}(u)$ (resp.~$\Lip_{\bar{\mssd}_\U}(u)$) is the Lipschitz constant with respect to $\mssd_\U$ (resp.~$\bar\mssd_\U$), see~\eqref{d:LS}.
It can be readily seen that 
\begin{align} \label{e:LLR2}
\bar{\mssd}_\U(\gamma, \eta) <+\infty \quad \iff \quad \gamma_{B_r^c}=\eta_{B_r^c} \comma \gamma(B_r)=\eta(B_r) \quad \text{for some $r>0$}\fstop
\end{align}
When we work with the configuration space over~the $n$-dimensional Euclidean space~$\R^n$ or over any Polish subset in $\R^n$, we always choose the Euclidean distance~$\mssd(\mathbf x, \mathbf y)=\bigl(\sum_{i=1}^n|x_i-y_i|^2)^{1/2}$ for $\mathbf x=(x_i)_{i=1}^n$ and $\mathbf y=(y_i)_{i=1}^n$, and the notation~$\mssd_\U$  (resp.~$\bar{\mssd}_\U$) always means  the $L^2$-transportation distance~(resp.~the $L^2$-transportation-type distance) associated with~the cost~$\mssd^2$.

\paragraph{Properties of $\mssd_\U$ and $\bar\mssd_\U$} In the following, we summarise relevant properties of the extended distances~$\mssd_\U$ and $\bar\mssd_\U$. 
\begin{rem}(properties of $\mssd_\U$ and $\bar\mssd_\U$)\label{r:ped} Let $X=\R^n$, $\mssd$ be the standard Euclidean distance in $\R^n$, and $\QP$ be a Borel probability measure on $\U$. 
\begin{enumerate}[(a)]
\item $\mssd_\U$ is a complete extended distance, $\tau_\mrmv^{\times 2}$-lower semicontinuous, and generates a stronger topology~$\tau_{\mssd_\U}$ than~the vague topology~$\tau_\mrmv$. See~\cite[Lem.~4.1]{RoeSch99}; 

\item $\bar\mssd_\U$ is complete, and the function $\bar\mssd_\U$ is $\mathscr B(\tau_\mrmv^{\times 2})$-measurable.  However, $\bar\mssd_\U$ is not $\tau_\mrmv^{\times 2}$-lower semicontinuous;

\item Both $\Lip_b(\U, \mssd_\U, \QP)$ and $\Lip_b(\U, \bar\mssd_\U, \QP)$ are dense in $L^2(\U, \QP)$. The density of $\Lip_b(\U, \mssd_\U, \QP)$ follows from e.g.,\cite[Prop.\ 4.1]{AmbGigSav14}. The density of $\Lip_b(\U, \bar\mssd_\U, \QP)$ follows by the inclusion~$\Lip_b(\U, \mssd_\U, \QP) \subset \Lip_b(\U, \bar\mssd_\U, \QP)$;

\item \label{r:ped4}Let $X=\R$. If $\QP$ is translation-invariant, then $\mssd_\U(\cdot, \gamma)=+\infty$ $\QP$-a.e.~for $\gamma=\sum_{x \in \Z}\delta_{x}$. In particular, this holds for $\sine_\beta$ for every $\beta>0$. The same holds for $\bar\mssd_\U$ as $\mssd_\U \le \bar\mssd_\U$.
 \end{enumerate}
\end{rem}
\begin{proof} We only prove (b) and (d). 
\\
(b): The completeness immediately follows by $\mssd_\U \le \bar\mssd_\U$ and the completeness of $\mssd_\U$. The measurability is due to~\cite[Prop.~2.2]{Suz24}. We show the non-lower semi-continuity. Take any pair $(\gamma, \eta) \in \U^{\times 2}$ with $\mssd_\U(\gamma, \eta)<+\infty$ such that $\gamma_{B_r^c} \neq \eta_{B_r^c}$, $\gamma(B_r)=\eta(B_r)$ for every $r>0$, and $\lim_{r\to \infty}\mssd_{\U}(\gamma_{B_r}, \eta_{B_r}) = \mssd_\U(\gamma, \eta)$. It holds that $\gamma_{B_r} \xrightarrow{\tau_\mrmv} \gamma$ and $\eta_{B_r} \xrightarrow{\tau_\mrmv} \eta$ as $r \to \infty$.
But, since $\bar\mssd_{\U}(\gamma, \eta)=+\infty$ due to \eqref{e:LLR2} and $\gamma_{B_r^c} \neq \eta_{B_r^c}$ for every~$r>0$, we have
$$\lim_{r\to \infty}\bar\mssd_{\U}(\gamma_{B_r}, \eta_{B_r})= \lim_{r\to \infty}\mssd_{\U}(\gamma_{B_r}, \eta_{B_r}) = \mssd_\U(\gamma, \eta)<\bar\mssd_{\U}(\gamma, \eta)=+\infty \fstop$$
(d): Let $B_\gamma=\{\eta \in \U: \mssd_{\U}(\gamma, \eta)<+\infty\}$ be the $\mssd_{\U}$-accessible component of $\gamma$. For $c \neq c' \in [0, 1)$, the sets $B_{\gamma+c}$ and $B_{\gamma+{c'}}$ are disjoint  because $\mssd_{\U}({\gamma+c, \gamma+{c'}})=+\infty$. By the translation-invariance, $\QP(B_\gamma)= \QP(B_{\gamma +c})$ for every $c \in \R$. If $\QP(B_\gamma)=m>0$, then $\{B_{\gamma+c}\}_{c \in [0,1)}$ is a family of uncountably many disjoint sets, each of which has a positive measure $\QP(B_{\gamma+c})=m$. This contradicts the hypothesis that $\QP$ is a probability measure.
\end{proof}

The statement~\ref{r:ped4} in Rem.~\ref{r:ped} shows that the extended distance~$\bar\mssd_{\U}(\cdot, \gamma)$  from a point can take $+\infty$ very often from the measure-theoretic viewpoint. However, if we see the distance $\gamma \mapsto \bar\mssd_\U(\gamma, \Lambda)=\inf_{\eta \in \Lambda}\bar\mssd_\U(\gamma, \eta)$  from a set $\Lambda \subset \U$, it recovers the finiteness and provides a non-trivial Lipschitz function.
\begin{ese}[Example and counterexample of Lipschitz functions] \label{r:LT} We assume the same setting as Rem.~\ref{r:ped}, and give examples and counterexamples of non-trivial functions in~$\Lip(\U, \bar\mssd_\U, \QP)$.
\begin{enumerate}[(a)]
\item Let $\U^{\infty}:=\{\gamma \in \U: \gamma(\R^n)=+\infty\}$. Fix an arbitrary open metric ball $U=B_r \subset \R^n$ with $r>0$, and take $\eta \in \U^\infty$. Define $\Lambda_{\eta, U}:=\{\gamma \in \U: \gamma_U=\eta_U\}$. The map
 $$ \U \ni \gamma \mapsto \mssd_{\U}(\gamma, \Lambda_{\eta, U}) :=\inf_{\zeta \in \Lambda_{\eta, U}}\mssd_\U(\gamma, \zeta) \in [0,+\infty]$$
 is $\tau_\mrmv$-continuous and $\mssd_\U(\cdot, \Lambda_{\eta, U}) \in \Lip(\U, \mssd_\U, \QP)$, therefore, also $\mssd_\U(\cdot, \Lambda_{\eta, U}) \in \Lip(\U, \bar\mssd_\U, \QP)$.  
Furthermore,
 $$\mssd_{\U}(\gamma, \Lambda_{\eta, U})<+\infty \cquad \gamma \in \U^\infty \fstop$$
 See \cite[Lem.~4.2]{RoeSch99} when $X$ is a complete Riemannian manifold, and \cite[Prop.~4.29]{LzDSSuz21} when $X$ is a metric space with local structure.

\item Let $\Lambda \in \mathscr B^*(\U)$ be a universally measurable set in $\U$. The map 
$$\U \ni \gamma \mapsto \bar\mssd_\U(\gamma, \Lambda):=\inf_{\eta \in \Lambda}\bar\mssd_\U(\gamma, \eta)$$ 
is universally measurable (\cite[Prop.~2.3]{Suz23}) and $\bar\mssd_\U(\cdot, \Lambda) \in \Lip(\U, \bar\mssd_\U, \QP)$. Furthermore, for $\Lambda \in \mathscr B^*(\U)$ with $\QP(\Lambda)>0$, 
 \begin{align}\label{e:FD}
 \bar\mssd_\U(\gamma, \Lambda)<+\infty \quad \text{$\QP$-a.e.~$\gamma$ }
 \end{align}
 under the assumption that $\QP$ is number rigid and  tail-trivial, see~\cite[Thm.~I]{Suz23}. In particular, \eqref{e:FD} holds for, e.g., $\sine_2$. 

\item (Cylinder functions are not Lipschitz).
Let $X=\R$. For $u \in C_0(\R)$, we define $u^*(\gamma)=\sum_{x \in \gamma}u(x)$.
We denote by ${\sf Cyl}(\U)$ the space of {\it cylinder functions} $U: \U \to \R$:
\begin{align}\label{d:CLF}
U=\Phi(u_1^*, \ldots, u_k^*) \comma \  \{u_1,\ldots, u_k\} \subset C_0^\infty(\R) \comma \  \Phi \in C_b^\infty(\R^k) \comma \  k \in \N \fstop
\end{align}
For each fixed $k \in \N_0$ and $r>0$, it is easy to see $U|_{\U^k(B_r)} \in \Lip(\U^k(B_r), \mssd_{\U})$. 
In~\cite[Example 4.35]{LzDSSuz21}, however, it was shown that ${\sf Cyl}(\U) \not \subset \Lip(\U, \mssd_{\U})$: the idea is to construct a function $U \in {\sf Cyl}(\U)$ whose square field $\cdc^{\U}(U)$ (see Dfn.~\ref{d:DFF}) does not belong to $L^\infty(\QP)$, where $\QP=\pi$ is the Poisson measure whose intensity measure is the Lebesgue measure in~$\R$. This shows that the function~$U$ cannot be $\mssd_\U$-Lipschitz due to the Rademacher-type property $\cdc^{\U}(u) \le \Lip_{\mssd_\U}(u)^2$ for every $u \in \Lip_b(\U, \mssd_\U)$, see~\cite[Thm.~1.3]{RoeSch99}.
The same argument also applies to $\bar\mssd_\U$ by the Rademacher-type property with respect to $\bar\mssd_\U$, see~Prop.~\ref{p:DF}.
 \end{enumerate}
\end{ese}

Here, we prove a Lipschitz contraction property of the operator $(\cdot)_r^\eta$ defined in~\eqref{e:SEF}. 
\begin{lem}\label{l:SEF3}
Let $u \in \Lip(\U, \bar{\mssd}_\U)$. Then, $u_r^\eta \in \Lip(\U(B_r), \mssd_\U)$ and 
\begin{align} \label{e:SEF3}
\Lip_{\mssd_\U}(u_r^\eta) \le \Lip_{\bar{\mssd}_\U}(u) \comma \quad \eta \in \U\comma \quad r>0\fstop
\end{align}
\end{lem}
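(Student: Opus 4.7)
\medskip

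The plan is to unwind the definitions and reduce the Lipschitz bound on $u_r^\eta$ to the defining Lipschitz bound on $u$. Fix $\gamma_1, \gamma_2 \in \U(B_r)$; if $\mssd_\U(\gamma_1,\gamma_2)=+\infty$ the desired inequality is vacuous, so assume this distance is finite. Finiteness of the $L^2$-transportation distance on $\U(B_r)$ forces $\gamma_1(B_r)=\gamma_2(B_r)$, since any coupling must match particles one-to-one. Setting $\tilde\gamma_i := \gamma_i + \eta_{B_r^c}$, we then have $\tilde\gamma_1|_{B_r^c}=\tilde\gamma_2|_{B_r^c}=\eta_{B_r^c}$ and $\tilde\gamma_1(B_r)=\tilde\gamma_2(B_r)$, so by \eqref{e:LLR2} we get $\bar{\mssd}_\U(\tilde\gamma_1,\tilde\gamma_2)<\infty$ and moreover $\bar{\mssd}_\U(\tilde\gamma_1,\tilde\gamma_2)=\mssd_\U(\tilde\gamma_1,\tilde\gamma_2)$ by the definition \eqref{eq:dW2L}.

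Next, I would compare $\mssd_\U(\tilde\gamma_1,\tilde\gamma_2)$ to $\mssd_\U(\gamma_1,\gamma_2)$ using the coupling characterization \eqref{eq:d:W2Upsilon}. Given any coupling $\cpl \in \Cpl(\gamma_1,\gamma_2)$ supported in $B_r \times B_r$, define the lifted coupling
\[
\tilde\cpl := \cpl + (\mathrm{Id},\mathrm{Id})_\# \eta_{B_r^c} \in \Cpl(\tilde\gamma_1,\tilde\gamma_2),
\]
where the second summand is the diagonal coupling of $\eta_{B_r^c}$ with itself. Since the diagonal contributes zero cost against $\mssd^2$, we have $\int \mssd^2\, \diff\tilde\cpl = \int \mssd^2\, \diff\cpl$; taking the infimum over $\cpl$ yields $\mssd_\U(\tilde\gamma_1,\tilde\gamma_2) \le \mssd_\U(\gamma_1,\gamma_2)$.

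Combining these two steps with the Lipschitz bound on $u$ and the definition $u_r^\eta(\gamma_i)=u(\tilde\gamma_i)$ gives
\[
|u_r^\eta(\gamma_1) - u_r^\eta(\gamma_2)| = |u(\tilde\gamma_1)-u(\tilde\gamma_2)| \le \Lip_{\bar{\mssd}_\U}(u)\, \bar{\mssd}_\U(\tilde\gamma_1,\tilde\gamma_2) \le \Lip_{\bar{\mssd}_\U}(u)\, \mssd_\U(\gamma_1,\gamma_2),
\]
which is the claim. There is no real obstacle here; the only subtle point is verifying that $\bar{\mssd}_\U(\tilde\gamma_1,\tilde\gamma_2)$ is finite in the relevant cases (so that the Lipschitz bound on $u$ is not vacuous but consistent), and this is handled by the particle-count matching via \eqref{e:LLR2}. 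The argument also transparently shows why working with $\bar{\mssd}_\U$ rather than $\mssd_\U$ is natural in this localised setting: it exactly captures distances between configurations that differ only inside $B_r$.
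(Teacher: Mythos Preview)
Your proof is correct and follows essentially the same approach as the paper's: both apply the Lipschitz bound for $u$ to the lifted configurations $\gamma_i+\eta_{B_r^c}$ and then compare $\bar{\mssd}_\U(\gamma_1+\eta_{B_r^c},\gamma_2+\eta_{B_r^c})$ to $\mssd_\U(\gamma_1,\gamma_2)$. You are in fact slightly more careful than the paper, which simply asserts the identity $\bar{\mssd}_\U(\gamma+\eta_{B_r^c},\zeta+\eta_{B_r^c})=\mssd_\U(\gamma,\zeta)$ without justification, whereas you explicitly construct the diagonal-extended coupling to obtain the inequality $\le$ (which is all that is needed) and handle the vacuous case $\mssd_\U(\gamma_1,\gamma_2)=+\infty$.
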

\begin{proof}
Let $\gamma, \zeta \in \U(B_r)$ and $\eta \in \U$. By the definition of $\bar\mssd_\U$, 
\begin{align*}
|u_{r}^\eta(\gamma)-u_{r}^\eta(\zeta)|=|u(\gamma+\eta_{B_r^c})-u(\zeta+\eta_{B_r^c})| &\le \Lip_{\bar{\mssd}_\U}(u) \bar{\mssd}_\U(\gamma+\eta_{B_r^c}, \zeta+\eta_{B_r^c}) 
\\
&= \Lip_{\bar{\mssd}_\U}(u) \mssd_\U(\gamma, \zeta) \fstop \qedhere
\end{align*}
\end{proof}

\section{Curvature bound for conditioned particle systems} \label{sec: Pre}
In this section, we work on~$\U=\U(X)$ with $X=\R$. Let $\QP=\sine_\beta$ for $\beta>0$, which is a Borel probability measure~on $\U$, see the second paragraph in~\S\ref{s:Int} for the definition.  Recall that $\QP_r^\eta$ is the projected conditional probability~\eqref{r:BMP}. Define the measure~$\QP_r^{k, \eta}:=\QP_r^\eta\mrestr{\U^k(B_r)}$ restricted in the $k$-particle configuration space~$\U^k(B_r)$ over the closed interval~$B_r=[-r, r]$. In this section, we construct Dirichlet forms with the symmetrising measure~$\QP_r^{k, \eta}$. We denote by $\mssm$  and $\mssm_r=\mssm\mrestr{B_r}$ the Lebesgue measure on~$\R$ and its restriction on~$B_r$ respectively, and by $\mssd(x, y):=|x-y|$ the standard Euclidean distance for $x, y \in \R$. For $\mathbf x=(x_i)_{i=1}^k$ and $\mathbf y=(y_i)_{i=1}^k$, the standard Euclidean distance (i.e., the $\ell_2$-product distance) in $\R^k$ is denoted by $\mssd^{\times k}(\mathbf x, \mathbf y)^2:=\sum_{i=1}^k|x_i-y_i|^2$.

\subsection{Construction of Dirichlet forms on $\U^k(B_r)$} For $k \in \N$, 
let $W^{1,2}_s(B_r^{\times k},  \mssm_r^{\otimes k})$ be the space of~$(1,2)$-Sobolev and {symmetric} functions on the product space $B_r^{\times k}$, i.e., 
$$W^{1,2}_s(B_r^{\times k}, \mssm_r^{\otimes k}):=\biggl\{u \in L^2_s(B_r^{\times k}, \mssm_r^{\otimes k}): \int_{B_r^{\times k}} |\nabla^{\otimes k} u|^2 \diff \mssm_r^{\otimes k} <+\infty \biggr\} \comma$$
where $\nabla^{\otimes k}$ denotes the weak derivative on $\R^{\times k}$: $\nabla^{\otimes k}u:=(\partial_1 u, \ldots, \partial_ku)$.   
As the space $W^{1,2}_s(B_r^{\times k}, \mssm_r^{\otimes k})$ consists of symmetric functions, the projection $\quot_k: B_r^{\times k} \to \U^k(B_r) \cong B_r^{\times k} /\mathfrak S_k$ acts on $W^{1,2}_s(B_r^{\times k}, \mssm_r^{\otimes k})$. The associated quotient $(1,2)$-Sobolev space is denoted by $W^{1,2}(B_r^{\times k}, \mssm_r^{\odot k})$, which is the $(1,2)$-Sobolev space on $\U^k(B_r)$:
$$W^{1,2}(\U^k(B_r), \mssm_r^{\odot k}):=\biggl\{u \in L^2(\U^k(B_r), \mssm_r^{\odot k}): \int_{\U^k(B_r)} |\nabla^{\odot k} u|^2 \diff \mssm_r^{\odot k} <+\infty \biggr\} \comma$$
where $\nabla^{\odot k}$ is the quotient operator of the weak gradient operator $\nabla^{\otimes k}$ through the projection $\quot_k$, and $\mssm_r^{\odot k}$ is the symmetric product measure defined as 
$$\mssm_r^{\odot k}:=\frac{1}{k!} (\quot_k)_\#\mssm_r^{\otimes k} \fstop$$
When $k=0$, $\U^0(B_r)$ is a one-point set consisting of $\gamma\equiv 0$ and $\mssm_r^{\odot 0}=\delta_0$ is the Dirac measure on~$\gamma \equiv 0$. We set $\nabla^{\odot 0}u \equiv 0$, so $W^{1,2}(\U^0(B_r), \mssm_r^{\odot 0}) = L^{2}(\U^0(B_r), \mssm_r^{\odot 0}) \cong~\R$.

\paragraph{Weighted Sobolev spaces} We construct a weighted Sobolev space on $B_r^{\times k}$ whose reference measure is the projected conditional probability~$\QP_r^{\eta}$ on~$\U(B_r)$. 
Thanks to~the DLR equation~\cite[Thm. 1.1]{DerHarLebMai20}, the measure $\QP_r^{\eta}$ has the density with respect to $\mssm_r^{\odot k}$:
for $\mu$-a.e.~$\eta$,  there exists $k=k(\eta) \in \N_0$  so that 
\begin{itemize}
\item the {\it number rigidity} holds:
\begin{align} \label{e:R1}
\text{$\mu_{r}^{\eta}(\U^l(B_r))>0$ if and only if $l=k(\eta)$} \ ;
\end{align}
\item the {\it Dobrushin--Lanford--Ruelle (DLR)} equation holds:  
\begin{align} \label{d:cp}
 \mu_{r}^{\eta}(A) &= \mu_{r}^{k, \eta}\bigl(A \cap \U^k(B_r)\bigr)=\int_{A \cap \U^k(B_r)} \frac{e^{-\Psi^{k, \eta}_{r}}}{Z_{r}^{k, \eta}} \diff \mssm_r^{\odot k} \cquad A \in \mathscr B\bigl(\U(B_r)\bigr) \comma
\end{align}
and the Hamiltonian~$\Psi^{k, \eta}_{r}$ has the following expression for $\gamma=\sum_{i=1}^k \delta_{x_i} \in \U(B_r)$:
\begin{align}
 \Psi_{r}^{k, \eta}(\gamma) &:=  - \lim_{R \to \infty}\Psi_{r, R}^{k, \eta}(\gamma) \notag
 \\
 &:= - \lim_{R \to \infty} \log \Biggl(  \prod_{i<j}^k |x_i-x_j|^\beta \prod_{i=1}^k\prod_{y \in \eta_{B_r^c}, |y| \le R} \Bigl|1-\frac{x_i}{y}\Bigr|^\beta\Biggr) \comma \notag
\end{align}
where $Z_r^{k, \eta}:=\int_{\U^k(B_r)}e^{-\Psi_r^{k, \eta}} \diff \mssm_r^{\odot k}$ is the normalising constant. The limit~$\Psi_{r, R}^{k, \eta}(\gamma)\xrightarrow{R \to \infty} \Psi_{r}^{k, \eta}(\gamma)$ exists for~$\QP$-a.e.~$\eta$, every $r>0$ and every~$\gamma \in \U(B_r)$. 
\end{itemize}
We note that the DLR equation~\eqref{d:cp} holds true also when $k(\eta)=0$. In this case, both the LHS and the RHS in~\eqref{d:cp}  are equal to~the Dirac measure~$\delta_0$ on $\gamma \equiv 0$ in $\U(B_r)$. 
We define the following weighted energy: for $k \in \N_0$ and for  $u, v \in \Lip_b(\U^k(B_r), \mssd_{\U})$,
\begin{align} \label{eq:form2}
\E^{\U^k(B_r), \mu_{r}^{k, \eta}}(u)&:= \frac{1}{2}\int_{\U^k(B_r)} |\nabla^{\odot k} u|^2 \diff \mu_{r}^{k, \eta}  \comma
\\
\E^{\U^k(B_r), \mu_{r}^{k, \eta}}(u, v)&:=\frac{1}{4} \biggl(\E^{\U^k(B_r), \mu_{r}^{k, \eta}}(u+v) - \E^{\U^k(B_r), \mu_{r}^{k, \eta}}(u-v)\biggr) \fstop \notag
\end{align}
Using the DLR equation~\eqref{d:cp}, we prove that the form~\eqref{eq:form2} is closable, and the closure (i.e., the smallest closed extension) is a strongly local regular Dirichlet form.
\begin{prop}\label{p:form2}
Let $\QP=\sine_\beta$ for $\beta>0$. For every $k \in \N_0$, the form~\eqref{eq:form2} is well-defined and closable for $\mu$-a.e.~$\eta$. The closure is a strongly local symmetric regular Dirichlet form on~$L^2(\U^k(B_r), \mu_{r}^{k, \eta})$ and its domain is denoted by $\dom{\E^{\U^k(B_r), \mu_{r}^{k, \eta}}}$. 
\end{prop}
\begin{proof}
The case~$k=0$ is trivial. Suppose $k \ge 1$. 
As $e^{-\Psi_{r, R}^{k, \eta}} \xrightarrow{R \to \infty} e^{-\Psi_{r}^{k, \eta}}$ uniformly on $\U^k(B_r)$ for $\mu$-a.e.~$\eta$ due to \cite[Lem.~2.3 and Proof of Thm.\ 2.1 in~p.~183]{DerHarLebMai20}, the density $e^{-\Psi_{r}^{k, \eta}}$ is continuous and bounded on $\U^k(B_r)$, hence 
the well-definedness follows by the following inequality:
\begin{align*} 
 \int_{\U^k(B_r)} |\nabla^{\odot k} u|^2 \diff \mu_{r}^{k, \eta} \le\Bigl\|e^{-\Psi_{r}^{k, \eta}}\Bigr\|_{L^\infty(\U^k(B_r), \mu_{r}^{k, \eta})}\int_{\U^k(B_r)} |\nabla^{\odot k} u|^2 \diff \mssm_r^{\odot k}<+\infty \fstop
\end{align*}
The closability follows by the continuity of the density~$e^{-\Psi_{r}^{k, \eta}}$ on $B_r^{\times k}$ and the standard Hamza-type argument, see e.g., \cite[Lem.~3.2]{Osa96}. 
Via the quotient map~$\mathsf P_k: B_r^{\times k} \to \U^k(B_r) \cong B_r^{\times k}/\mathfrak S(k)$,  the symmetry, the strong locality and the Markovian property of $\E^{\U(B_r), \mu_{r}^{k, \eta}}$ descend  from the corresponding properties of the following bilinear form on the product space~$B_r^{\times k}$:  
\begin{align}\label{c:CB}\E^{B_r^{\times k}, \mu_{r}^{k, \eta}}:=\frac{1}{2}\int_{B_r^{\times k}} |\nabla^{\otimes k} u|^2 e^{-\Psi_{r}^{k, \eta}} \diff \mssm_r^{\otimes k} \cquad u \in \Lip_{b, s}(B_r^{\times k}, \mssd^{\times k}) \comma
\end{align}
where $\Lip_{b, s}(B_r^{\times k}, \mssd^{\times k})$ is the space of symmetric bounded $\mssd^{\times k}$-Lipschitz functions.
These properties extend to the closure (e.g., \cite[Thm.~3.1.1,~3.1.2]{FukOshTak11}). The regularity is straightforward as $\Lip_b(\U^k(B_r), \mssd_{\U})$ is dense in $C(\U^k(B_r), \tau_\mrmv)=C_0(\U^k(B_r), \tau_\mrmv)$ as well as in $\dom{\E^{\U(B_r), \mu_{r}^{k, \eta}}}$. 
\end{proof}

\subsection{Curvature bound for conditioned particle systems} \label{sec:CBFPS}
We show that the interaction potential~$\Psi_{r}^{k, \eta}$ defined in~\eqref{d:cp} is geodesically convex in $(\U^k(B_r), \mssd_{\U})$. 
\begin{prop} \label{p:conv}
$\Psi_{r}^{k, \eta}$ is geodesically convex in $(\U^{k}(B_r), \mssd_{\U})$ for every $0<r<+\infty$, $k \in \N_0$ and $\eta \in \U(B_r^c)$.
\end{prop}
\begin{proof}
The case~$k=0$ is trivial as $\U^k(B_r)$ is a one-point set~$\{\gamma \equiv 0\}$. Suppose $k \ge 1$. 
Recall that, for $\gamma=\sum_{i=1}^k \delta_{x_i}$,
\begin{align} \label{eq: conv}
\Psi_{r, R}^{k, \eta}(\gamma) = -\beta\sum_{i<j}^k \log (|x_i-x_j|) - \beta\sum_{i=1}^k \sum_{y \in \eta_{B_r^c}, |y| \le R} \log\Bigl|1-\frac{x_i}{y}\Bigr| \fstop
\end{align} 
Let $H_{ij}, H_i^y$ be the Hessian matrices of the functions $(x_1, \ldots, x_k) \mapsto -\log |x_i-x_j|$ and~$(x_1, \ldots, x_k) \mapsto -\log|1-\frac{x_i}{y}|$ respectively. For every vector~$\mathbf v=(v_1, \ldots, v_k) \in \R^{k}$, 
\begin{align} \label{e:HCP}
\mathbf v H_{ij} \mathbf v^t = \frac{(v_i-v_j)^2}{|x_i-x_j|^2}, \quad  \mathbf v H^y_{i} \mathbf v^t = \frac{v_i^2}{|y-x_i|^2} \fstop
\end{align}
Both $H_{ij}$ and $H_i^y$ are, therefore,  positive semi-definite. 
Thus,  for every $0<r<R$, $y \in [-R, -r] \cup [r, R]$ and $i, j \in  \{1, 2, \ldots, k\}$ with $i<j$, the functions~$(x_1, \ldots, x_k) \mapsto -\log |x_i-x_j|$ and~$(x_1, \ldots, x_k) \mapsto -\log|1-\frac{x_i}{y}|$ are geodesically convex  in the following closed convex space:
$$W_{r}^k:=\bigl\{(x_1, \ldots, x_k) \in B_r^{\times k}: x_{1} \ge x_{2} \ge \cdots \ge x_{k}\bigr\}  \fstop$$ 
Note that, if $u_1, \ldots, u_k$ are geodesically convex and $\alpha_1,\ldots, \alpha_k \ge 0$, the sum~$\sum_{i=1}^k \alpha_iu_i$ is geodesically convex as well. Thus,  we obtain that that $\Psi_{r, R}^{k, \eta}$ is also geodesically convex on $W_{r}^k$. 
By the isometry $(W_{r}^k, \mssd^{\times k}) \cong (\U^k(B_r), \mssd_\U)$, the function~$\Psi_{r, R}^{k, \eta}$ is geodesically convex in~$(\U^k(B_r), \mssd_\U)$ as well. 
Since $\Psi_{r, R}^{k, \eta} \xrightarrow{R \to \infty}\Psi_{r}^{k, \eta}$ pointwise in $\U^k(B_r)$ for $\mu$-a.e.~$\eta$ by \cite[Thm.~1.1]{DerHarLebMai20},  the limit~$\Psi_{r}^{k, \eta}$ is geodesically convex in~$(\U^k(B_r), \mssd_\U)$ as well. The proof is complete.
\end{proof}

Thanks to Prop.~\ref{p:conv}, the Dirichlet form $\bigl(\E^{\U(B_r), \mu_{r}^{k, \eta}}, \dom{\E^{\U(B_r), \mu_{r}^{k, \eta}}}\bigr)$ satisfies the Riemannian Curvature Dimension condition~$\RCD(0,\infty)$. 
\begin{prop} \label{p:BE2} 
Let $\QP=\sine_\beta$ for $\beta>0$. For every $0<r<+\infty$ and $\mu$-a.e.~$\eta \in \U$, 
the metric measure space $(\U^k(B_r), \mssd_\U, \mu_{r}^{k, \eta})$ satisfies $\RCD(0, \infty)$ with $k=k(\eta) \in \N_0$ as in \eqref{e:R1}. Furthermore,  
$$\bigl(\E^{\U^k(B_r), \mu_{r}^{k, \eta}}, \dom{\E^{\U^k(B_r), \mu_{r}^{k, \eta}}} \bigr)=\bigl(\Ch^{\mssd_\U, \mu_{r}^{k, \eta}}, W^{1,2}(\U^k(B_r), \mssd_\U, \mu_{r}^{k, \eta})\bigr) \fstop$$
\end{prop}
\begin{proof}
We only discuss the case $k \ge 1$ as we have nothing to discuss for $k=0$.
As $B_r^{\times k}$ is a convex subset in $\R^{k}$, the space $(B_r^{\times k}, \mssd^{\times k}, \mssm_r^{\otimes k})$ is a geodesic subspace of~$\R^{k}$. Therefore, it satisfies $\RCD(0, \infty)$ by the Global-to-Local property of $\RCD(0,\infty)$, see~\cite[Thm.~6.20]{AmbGigSav14b}. 
The $k$-particle configuration space $(\U^k(B_r), \mssd_\U, \mssm_r^{\odot k})$ is the quotient space of~$(B_r^{\times k}, \mssd^{\times k}, \mssm_r^{\otimes k})$  with respect to the symmetric group~$\mathfrak S_k$. Thanks to~\cite[{Thm.~1.1}]{GalKelMonSos18}, the property~$\RCD(0,\infty)$ is preserved under the quotient by a compact Lie group.  {The symmetric group~$\mathfrak S_k$ is a compact Lie group as it is a finite discrete group}. Thus, we obtain that $(\U^k(B_r), \mssd_\U, \mssm_r^{\odot k})$ satisfies $\RCD(0, \infty)$ as well. 
We now take the convex closed geodesic subspace~$\U^k_\e(B_r) \subset \U^k(B_r)$ defined as
\begin{align*}
\U^k_\e(B_r)&:=\biggl\{\gamma =\sum_{i=1}^k\delta_{x_i} \in \U^k(B_r): |x_i-x_j| \ge \e, \ i, j \in \{1, \ldots, k\}\biggr\} \comma
\\
\mssd_{\U, \e}&:=\mssd_{\U}|_{\U^k_\e(B_r)\times {\U^k_\e(B_r)}}\cquad \mssm^{\odot k}_{r, \e}:=\mssm_r^{\odot k}\mrestr{\U^k_\e(B_r)} \fstop
\end{align*}
By using the Global-to-Local property of $\RCD(0,\infty)$ again, the space~$(\U^k_\e(B_r), \mssd_{\U, \e}, \mssm^{\odot k}_{r, \e})$ is $\RCD(0,\infty)$ for every~$\e>0$. 
As $\Psi_{r}^{k, \eta}: \U^k(B_r) \to \R \cup\{+\infty\}$ is geodesically convex in~$\U^k(B_r)$ by~Prop.~\ref{p:conv}, it is also geodesically convex in the convex subspace~$\U^k_\e(B_r)$. Thus, $\Psi_{r}^{k, \eta}$ is a bounded continuous and geodesically convex function on $\U^k_\e(B_r)$ taking value in~$\R$ (not taking $+\infty$).  Noting the fact that the constant multiplication (by the normalisation constant) does not change the $\RCD$ property, the weighted space $(\U_\e^k(B_r), \mssd_{\U, \e}, \mu_{r,\e}^{k, \eta})$, therefore,  satisfies $\RCD(0,\infty)$ for every~$\e>0$ by \cite[Prop.~6.21]{AmbGigSav14b}, where 
$$\diff \mu_{r,\e}^{k, \eta}:= \frac{1}{Z_{r, \e}^{k, \eta}}e^{-\Psi_{r}^{k, \eta}}\diff \mssm_r^{\odot k} \cquad Z_{r, \e}^{k, \eta}=\int_{\U^k_\e(B_r)}e^{-\Psi_{r}^{k, \eta} }\diff \mssm_r^{\odot k} \fstop$$
Noting that~$e^{-\Psi_{r}^{k, \eta}}$ is bounded and continuous in $\U(B_r)$,  we can easily show that $\mu_{r,\e}^{k, \eta} \xrightarrow{\e \to 0}\mu_{r}^{k, \eta}$ weakly as probability measures in $\U^k(B_r)$.
 Thus, by the stability of the $\RCD(0,\infty)$~condition due to~\cite[Thm.~IV]{GigMonSav15}, the limit space~$(\U^k(B_r), \mssd_{\U}, \mu_{r}^{k, \eta})$ satisfies~$\RCD(0,\infty)$ as well.

We prove the second assertion:
$$\Bigl(\E^{\U^k(B_r), \mu_{r}^{k, \eta}}, \dom{\E^{\U^k(B_r), \mu_{r}^{k, \eta}}}\Bigr)=\Bigl(\Ch^{\mssd_\U, \mu_{r}^{k, \eta}}, W^{1,2}(\U^k(B_r), \mssd_\U, \mu_{r}^{k, \eta})\Bigr) \fstop$$ 
Recall the Rademacher theorem, i.e., $\mssd^{\times k}$-Lipschitz functions are differentiable almost everywhere with respect to $\mssm_r^{\otimes k}$ on $B_r^{\times k}$. As it  descends to~ $\U^k(B_r)$ via the projection $\quot_k: B_r^{\times k} \to \U^k(B_r)$, we have 
\begin{align}\label{e:S=D}
|\mathsf D_{\mssd_\U} u|=|\nabla^{\odot k} u| \qquad \mssm_r^{\odot k}\text{-a.e.} \cquad u \in \Lip(\U^k(B_r), \mssd_\U) \fstop
\end{align} 
Recalling~\eqref{d:ISL},  the functional $\mathsf E^{\mssd_\U, \QP_r^{k, \eta}}: L^2(\U^k(B_r), \QP_r^{k, \eta}) \to \R \cup\{+\infty\}$ is defined as
\begin{align} \label{d:ISLK}
\mathsf E^{\mssd_\U, \QP_r^{k, \eta}}(u):=
\begin{cases}
\frac{1}{2}  \int_{\U^k(B_r)} |\mathsf D_{\mssd_\U} u|^2 \diff \QP_{r}^{k ,\eta}  \quad & u \in \Lip_b(\U^k(B_r), \mssd_\U) \ ;
\\
+\infty \quad& \text{otherwise} \fstop
\end{cases}
\end{align}
By \eqref{e:S=D}, we have 
\begin{align} \label{eq:BE1}
\E^{\U^k(B_r), \mu_{r}^{k, \eta}} =  \mathsf E^{\mssd_\U, \QP_r^{k, \eta}} \quad  \text{on} \quad \Lip_b(\U^k(B_r), \mssd_\U)\comma
\end{align}
and $\E^{\U^k(B_r), \mu_{r}^{k, \eta}} \le \mathsf E^{\mssd_\U, \QP_r^{k, \eta}}$ on $L^2(\U^k(B_r), \QP_r^{k, \eta})$.
By~Prop.~\ref{p:form2}, $(\E^{\U^k(B_r), \mu_{r}^{k, \eta}}, \dom{\E^{\U(B_r), \mu_{r}^{k, \eta}}})$ is the closure (i.e., the smallest closed extension) of \eqref{d:ISLK}. Thus, it coincides with the $L^2$-lower semi-continuous envelope $\Ch^{\mssd_\U, \mu_{r}^{k, \eta}}$ of \eqref{d:ISLK}, see, e.g., \cite[(e) Relaxation, p.373]{Mos94}. \qedhere
\end{proof}

\section{Curvature bound for infinite-particle systems} \label{sec:CI}
In this section, we construct a strongly local Dirichlet form on $\U=\U(\R)$ whose symmetrising measure is $\sine_\beta$, and we prove that it satisfies ~the~$\BE(0,\infty)$ gradient estimate. The structure of the proof is as follows: we first construct {\it truncated Dirichlet forms} $(\E_r^{\U, \QP}, \dom{\E_r^{\U, \QP}})$ on $\U$ whose gradient operators are truncated by configurations inside~$B_r$. We then identify them with the superposition (also called the direct integral) Dirichlet forms $(\bar\E_r^{\U, \QP}, \dom{\bar\E_r^{\U, \QP}})$ lifted from the form~\eqref{eq:form2} on $\U^k(B_r)$. The truncated Dirichlet form is used to construct the limit Dirichlet form as the monotone limit $r \to \infty$, while the superposition Dirichlet form is used to show $\BE(0,\infty)$.
 At the end of this section, we discuss several applications of the~$\BE(0,\infty)$ gradient estimate.

\subsection{Truncated Dirichlet forms}
In this subsection, we construct the truncated Dirichlet forms on~$\U$. We start with the construction of the Dirichlet forms on~$\U(B_r)$ as the countable sum over $k \in \N_0$ of the  forms \eqref{eq:form2} on $\U^k(B_r)$. 
\begin{defs}[Square field on $\U(B_r)$]\label{d:DT2} 
Fix $r>0$ and $\eta \in \U$. For a {$\mu_r^\eta$}-measurable function $u: \U(B_r) \to \R$ satisfying $u|_{\U^k(B_r)} \in \mathcal D(\E^{\U(B_r), \mu_r^{k, \eta}})$ for every $k \in \N_0$, the square field $\cdc^{\U(B_r)}(u)$ is defined as 
\begin{align} \label{d:GSF}
\Gamma^{\U(B_r)}(u) := \sum_{k=0}^\infty \Bigl|\nabla^{\odot k} \bigl(u |_{\U^k(B_r)}\bigr)\Bigr|^2  \quad (\le +\infty) \comma
\end{align}
and define the following form:
\begin{align} \label{e:FFV}
\E^{\U(B_r), \mu_r^\eta}(u)&:=\frac{1}{2}\int_{\U(B_r)} \Gamma^{\U(B_r)}(u) \diff\mu_{r}^{\eta} \comma
\\
\dom{\E^{\U(B_r), \mu_r^\eta}}&:=\Bigl\{u:  \U(B_r) \to \R: {u|_{\U^k(B_r)} \in \mathcal D(\E^{\U(B_r), \mu_r^{k, \eta}}) \quad k \in \N_0} \comma \E^{\U(B_r), \mu_r^\eta}(u)<+\infty \Bigr\} \fstop \notag
\end{align}
The form~\eqref{e:FFV} is a strongly local symmetric Dirichlet form as it is a countable sum of strongly local symmetric Dirichlet forms see e.g., \cite[Exercise~3.9 in p.31]{MaRoe90}. 
Due to the number-rigidity~\eqref{e:R1}, the Dirichlet form~$\E^{\U(B_r), \mu_r^\eta}$ is equal to $\E^{\U(B_r), \mu_r^{k, \eta}}$ for $k=k(\eta)$. 
The corresponding $L^2(\U(B_r), \QP_r^\eta)$-semigroup  is denoted by $\sem{T_t^{\U(B_r), \QP_r^\eta}}$.
\end{defs} 
\begin{rem}[{Comparison with~\cite{KawOsaTan21}}] \label{r:CK}
The form~\eqref{e:FFV} coincides with the form given in \cite[(2.45)]{KawOsaTan21}, where the domain there is the smallest closed extension of smooth functions in $\U(B_r)$ while \eqref{e:FFV}  is the smallest closed extension of Lipschitz functions $\Lip_b(\U(B_r), \mssd_\U)$ due to Prop.~\ref{p:BE2}. This identification~follows by  the standard fact that every bounded Lipschitz function can be approximated by smooth functions in the $(1,2)$-Sobolev space on $W^{1,2}(\U^k(B_r), \mssm_r^{\odot k})$  and this approximation inherits to the weighted Sobolev space 
$$(\E^{\U(B_r), \mu_r^{k,\eta}}, \dom{\E^{\U(B_r), \mu_r^{k, \eta}}})\comma$$
 which can be readily seen by the boundedness of the density $\frac{\diff\mu_r^{k,\eta}}{\diff \mssm_r^{\odot k}} \in L^\infty(\U^k(B_r), \mssm_r^{\odot k})$ for every $k \in \N_0$.   Therefore, due to the argument \cite[line 11--18, p.654]{KawOsaTan21}, the semigroup $\sem{T_t^{\U(B_r), \QP_r^\eta}}$ gives the transition probability of the unlabelled solution to the finite-particle Dyson SDE~\cite[(2.40)--(2.43)]{KawOsaTan21} with the configuration outside $B_r$ conditioned to be~$\eta_{B_r^c}$ and with the reflecting boundary condition at $\partial B_r$. 
\end{rem}

Recall that $u_r^\eta(\gamma):=u(\gamma+\eta_{B_r^c})$ for $\gamma \in \U(B_r)$ and $\eta \in \U$ was defined in~\eqref{e:SEF}.
\begin{defs}[Core]\label{d:core}For $r>0$, $\mathcal C_r$ is defined as the space of $\mu$-classes of measurable functions $u$ so that
\begin{enumerate}[$(a)$]
\item\label{i:d:core1} $u\in L^\infty(\U, \mu)$; 
\item\label{i:d:core2} $u_r^\eta \in \Lip_b(\U(B_r), \mssd_\U)$ for $\QP$-a.e.~$\eta$;

\item\label{i:d:core3}  The following integral is finite:
\begin{align} \label{eq:VariousFormsA} 
\E^{\U, \QP}_r(u):=\int_{\U} {\E^{\U(B_r), \QP_r^\eta}(u_r^\eta)} \diff\QP(\eta) <+\infty \fstop
\end{align} 
\end{enumerate}
It will be proven in Prop.~\ref{t:ClosabilitySecond} that
$\mathcal C_r$ is non-trivial in the sense that $\Lip_b(\U, \mssd_\U, \QP)$ is contained in $\mathcal C_r$, in particular, $\mathcal C_r$ is dense in $L^2(\U, \QP)$ due to (c) Rem.~\ref{r:ped}.
\end{defs}

\paragraph{Square fields of the truncated forms}
For~$u: \U \to \R$, define~$\mathcal U_{\gamma, x}(u): \R \to \R$ by 
\begin{align} \label{d:UO}
\mathcal U_{\gamma, x}(u)(y):=u\ttonde{\car_{\R\setminus\set{x}}\cdot\gamma + \delta_y}-u\ttonde{\car_{\R\setminus\set{x}}\cdot\gamma} \comma \quad \gamma \in \U,\quad x \in \R \fstop
\end{align}
The operation $\mathcal U_{\gamma, x}$ was introduced in~\cite[Lem.~1.2]{MaRoe00} to define {\it a partial derivative} in the configuration space, see also \cite[Lem.~2.16]{LzDSSuz21}.  We introduce a localised version~$\mathcal U_{\gamma, x}^r$ below. 
\begin{lem} \label{l:LSO} 
For $u: \U(B_r) \to \R$, define  $\mathcal U_{\gamma, x}^r(u): B_r \to \R$ by 
$$\mathcal U_{\gamma, x}^r(u)(y):=u(\1_{B_r \setminus \{x\}} \cdot \gamma + \delta_y) - u(\1_{B_r \setminus \{x\}} \cdot \gamma) \quad \gamma \in \U(B_r), \ x \in B_r \fstop$$
The operation $\mathcal U^r_{\gamma, x}$ maps $\Lip(\U(B_r), \mssd_\U)$ to $\Lip(B_r, \mssd)$ and Lipschitz constants are contracted by $\mathcal U^r_{\gamma, x}$:
 $$\Lip_{\mssd}(\mathcal U^r_{\gamma, x}(u)) \le \Lip_{\mssd_\U}(u) \cquad \gamma \in \U(B_r) \cquad x \in B_r \fstop$$
Furthermore, for every function~$u:  \U \to \R$, 
$$\mathcal U_{\gamma_{B_r}, x}^r(u_{r}^\gamma)(y) = \mathcal U_{\gamma, x}(u)(y) \cquad \gamma \in \U \cquad x \in B_r \cquad y \in B_r \fstop$$
\end{lem}
\begin{proof}
 Let $u \in \Lip(\U(B_r), \mssd_\U)$. Then
\begin{align*} 
|\mathcal U^r_{\gamma, x}(u)(y)- \mathcal U^r_{\gamma, x}(u)(z)| &= |u(\car_{B_r\setminus\set{x}}\cdot\gamma + \delta_y)-u(\car_{B_r\setminus\set{x}}\cdot\gamma + \delta_z)|
\\
& \le \Lip_{\mssd_{\U}}(u)\mssd_\U(\car_{B_r\setminus\set{x}}\cdot\gamma + \delta_y, \car_{B_r\setminus\set{x}}\cdot\gamma + \delta_z) \notag
\\
&= \Lip_{\mssd_{\U}}(u) \mssd(y,z) \comma \notag
\end{align*}
which concludes the first assertion. 
For every $x \in {B_r}$ and $y \in B_r$, 
\begin{align*}
 \mathcal U_{\gamma, x}(u)(y)& = u(\1_{\R \setminus \{x\}} \cdot \gamma + \delta_y)- u(\1_{\R \setminus \{x\}} \cdot \gamma)
 \\
 &  = u(\1_{B_r \setminus \{x\}} \cdot \gamma_{B_r} + \gamma_{B_r^c}+ \delta_y)- u(\1_{B_r \setminus \{x\}} \cdot \gamma_{B_r} + \gamma_{B_r^c})
 \\
 &= u_{r}^{\gamma}(\1_{B_r \setminus \{x\}} \cdot \gamma_{B_r}+ \delta_y)- u_{r}^{\gamma}(\1_{B_r \setminus \{x\}} \cdot \gamma_{B_r})
 \\
 &=\mathcal U^r_{\gamma_{B_r}, x}(u_{r}^\gamma)(y) \fstop
\end{align*}
The proof is complete.
\end{proof}

We now define the square field operator on $\U$ truncated by particles inside $B_r$. {To do so, we make use of a strong Borel lifting operator~$\ell: L^\infty(B_r, \mssm_r) \to \mathcal B_b(B_r)$. The lifting chooses Borel representatives of elements in~$L^\infty(B_r, \mssm_r)$ in such a way that  the algebraic and order structures of $ L^\infty(B_r, \mssm_r)$ are preserved, and continuous functions are fixed, see Dfn.~\ref{d:Liftings} in Appendix. }
\begin{defs}[Truncated square field on $\U$]\label{d:DT}
Let $\ell:L^\infty(B_r, \mssm_r) \to \mathcal B_b(B_r)$ be a strong Borel lifting.
The following operator is called {\it the truncated square field}:
\begin{equation}\label{eq:d:LiftCdCRep}
\begin{gathered}
\cdc^{\dUpsilon}_{r, \ell}(u)(\gamma):= \sum_{x\in \gamma_{B_r}} {\ell}\Bigl(|\nabla \mathcal U_{\gamma, x}(u)|^2\Bigr)(x) \cquad u \in \mathcal C_r\fstop
\end{gathered}
\end{equation}
{Due to Lem.~\ref{l:LSO} and the strong lifting $\ell$, the formula~\eqref{eq:d:LiftCdCRep} is well-defined for $u \in \mathcal C_r$. Indeed, by (b) Dfn.~\ref{d:core},  we have $u_r^\gamma \in \Lip_b(\U(B_r), \mssd_\U)$ for $\QP$-a.e.~$\gamma$. By Lem.~\ref{l:LSO}, 
$$\mathcal U_{\gamma, x}(u)|_{B_r}=\mathcal U_{\gamma_{B_r}, x}^r(u_r^\gamma) \in \Lip_b(B_r, \mssd) \cquad \QP\text{-a.e.~$\gamma$,\ $x \in B_r$} \fstop$$
 Since Lipschitz functions on $B_r$ are $\mssm_r$-almost everywhere differentiable, the expression $|\nabla \mathcal U_{\gamma, x}(u)|$ is well-defined $\mssm_r$-a.e.~on $B_r$ and $|\nabla \mathcal U_{\gamma, x}(u)| \in L^\infty(B_r, \mssm_r)$.
Thanks to the strong lifting $\ell$,  the function~${\ell}\Bigl(|\nabla \mathcal U_{\gamma, x}(u)|^2\Bigr)$ is defined everywhere (as opposed to $\mssm_r$-a.e.), so that the summation in the RHS of~\eqref{eq:d:LiftCdCRep} is well-defined.  The function~$\cdc^{\dUpsilon}_{r, \ell}(u)$ {\it does} depend on the lifting $\ell$, but its $\QP$-equivalence class  {\it does not} depend on $\ell$, which will be discussed in (a) of Prop.~\ref{t:ClosabilitySecond}.}
\end{defs} 
\begin{rem}
If $|\nabla \mathcal U_{\gamma, x}(u)|$ is defined everywhere (e.g., $u$ is a cylinder function as in~\eqref{d:CLF}, or a local smooth function in the sense of \cite[(1.2)]{Osa96}),  we do not need the strong lifting $\ell$ in~\eqref{eq:d:LiftCdCRep} as there is no ambiguity of sets of measure zero. However,  for later arguments (e.g. in the proof of Thm.~\ref{t:S=M}), we need to take a sufficiently large core such that it is fixed by the action of the $L^2$-semigroup. For such a core, we need the strong lifting to obtain the concrete expression~\eqref{eq:d:LiftCdCRep}, which shall play a key role to show the monotonicity of the truncated forms in Prop.~\ref{p:mono}.
\end{rem}

The following proposition relates the two square fields $\cdc^{\U}_r$ and $\cdc^{\U(B_r)}$. 
\begin{prop}[Truncated form]\ 
\label{t:ClosabilitySecond} 
\begin{enumerate}[(a)]
{\item Let $\ell_1$ and $\ell_2$ be any two strong Borel liftings. Then, 
$$\cdc^{\dUpsilon}_{r, \ell_1}(u) = \cdc^{\dUpsilon}_{r, \ell_2}(u) \quad  \text{$\QP$-a.e.} \quad  u \in \mathcal C_r \fstop$$
We denote by $\cdc^{\dUpsilon}_{r}(u) \in L^0(\QP)$ the unique $\QP$-equivalence class represented by~$\cdc^{\dUpsilon}_{r, \ell_1}(u)$.
}
\item The following identities hold:
\begin{align}\label{eq:p:MarginalWP:0}
\cdc^{\dUpsilon}_r(u)(\gamma+\eta_{B_r^c}) &= \cdc^{\dUpsilon(B_r)}(u_{r}^\eta)(\gamma) \comma \quad \text{$\QP$-a.e.~$\eta$, \ $\mu_{r}^\eta$-a.e.~$\gamma \in \U(B_r)$}  \comma
\\
\E_{r}^{\U, \mu}(u) &=\frac{1}{2}\int_\dUpsilon \cdc^{\U}_r(u) \diff\QP\comma \quad u \in \mathcal C_r\fstop \notag
\end{align}
\item The Rademacher-type property holds: $\Lip_b(\U, \bar{\mssd}_\U, \QP)\subset \mathcal C_r$ and 
\begin{align}\label{p:Rad}
 \cdc^\U_r(u) \le \Lip_{\bar{\mssd}_\U}(u)^2 \qquad u \in \Lip_b(\U, \bar{\mssd}_\U, \QP) \fstop
\end{align}
As a consequence, the form $(\E^{\U, \QP}_{r}, \mathcal C_r)$ in~\eqref{eq:p:MarginalWP:0} is a densely defined closable  form and 
the closure~$(\E_r^{\U, \QP}, \dom{\E_r^{\U, \QP}})$ is a strongly local symmetric Dirichlet form on~$L^2(\U, \mu)$. The $L^2$-semigroups corresponding to $(\E_{r}^{\U, \mu}, \dom{\E_{r}^{\U, \mu}})$ is denoted by $\sem{T_{r, t}^{\U, \QP}}$. 
\end{enumerate}
\end{prop}

\begin{proof}
{(a) and (b): Take a strong Borel lifting $\ell$. We denote~$|\partial_{i,\ell} \cdot|^2:=\ell(|\partial_{i} \cdot|^2)$ for $i \in \N$, and define the corresponding squared field in~$B_r^{\times k}$ as 
$$|\nabla^{\otimes k}_\ell \cdot|^2:=\sum_{i=1}^k|\partial_{i,\ell} \cdot|^2 \fstop$$
We denote by $|\nabla^{\odot k}_\ell \cdot|$ the corresponding quotient square field on $\U^k(B_r)$. When $k=0$, $|\nabla^{\odot k}_\ell \cdot| \equiv 0$.
By the property~\ref{i:d:Liftings:1} in Dfn.~\ref{d:Liftings} of~the strong Borel lifting~$\ell$, we have 
$|\nabla^{\odot k}_\ell(u)|^2 = |\nabla^{\odot k}(u)|^2$ $\mssm_r^{\odot k}$-a.e.~for $k \in \N_0$. 
As $\QP_r^{k, \eta} \ll \mssm_r^{\odot k}$, we have
\begin{align}\label{e:IUL}
\Bigl|\nabla^{\odot k}_\ell \bigl(u \bigr)\Bigr|^2 = \Bigl|\nabla^{\odot k} \bigl(u\bigr)\Bigr|^2\quad \text{$\QP_r^{k, \eta}$-a.e.} \cquad k \in \N_0 \fstop
\end{align}
}We note that $\QP_r^\eta$ is concentrated on the set of no multiple points, i.e., $\QP_r^\eta(\U_{\le 1}(B_r))=1$, where $\U_{\le 1}(B_r):=\{\gamma \in \U(B_r): \gamma(\{x\}) \in \{0,1\},\  x \in B_r\}$ because $\mu_r^\eta$ is absolutely continuous with respect to the Poisson measure~$\pi_{\mssm_r}$ on $\U(B_r)$ and the Poisson measure does not have multiple points almost surely. Thus, by the definition of the symmetric gradient operator $\nabla^{\odot k}$, it can be readily checked that 
\begin{align}\label{e:ESU}
\sum_{k =0}^\infty \Bigl|\nabla^{\odot k}_\ell \bigl(u \bigr)\Bigr|^2(\gamma)  = \sum_{x\in\gamma_{B_r}} \Bigl|\nabla_\ell u \tparen{\car_{B_r \setminus \set{x}} \cdot\gamma_{B_r}+\delta_\bullet}\Bigr|^2(x) \cquad \text{$\QP_r^\eta$-a.e.~$\gamma$} \fstop
\end{align}
Thus, for every $\phi \in L^2(\U, \QP)$,  
\begin{align} \label{e:ILT}
&\frac{1}{2} \int_{\U} \cdc^\dUpsilon_{r, \ell}(u) (\gamma) \phi(\gamma) \diff \QP(\gamma)
\\
&= \frac{1}{2}\int_{\U} \biggl( \sum_{x\in\gamma_{{B_r}}} \Bigl|\nabla_\ell \Bigl( u\tparen{\car_{\R \setminus \set{x}} \cdot\gamma+\delta_\bullet}-u\tparen{\car_{\R\setminus\set{x}}\cdot\gamma} \Bigr)\Bigl|^2(x) \biggr) \phi(\gamma) \diff \QP(\gamma) \notag
\\
&= \frac{1}{2}\int_{\U} \biggl(  \sum_{x\in\gamma_{{B_r}}}  \Bigl|\nabla_\ell \Bigl( u_{r}^{\gamma}\tparen{\car_{B_r \setminus \set{x}} \cdot\gamma_{B_r}+\delta_\bullet}-u^\gamma_{r}\tparen{\car_{B_r \setminus\set{x}}\cdot\gamma_{B_r}} \Bigr)\Bigr|^2(x) \biggr) \phi(\gamma) \diff \QP(\gamma) \notag
\\
&=  \frac{1}{2} \int_{\U} \biggl(\sum_{x\in\gamma_{{B_r}}} \bigl|\nabla_\ell u_{r}^{\gamma}\tparen{\car_{B_r \setminus \set{x}} \cdot\gamma_{B_r}+\delta_\bullet}\bigr|^2(x)\biggr)\phi(\gamma) \diff \QP(\gamma) \notag
\\
&=  \frac{1}{2} \int_{\U} \int_{\U(B_r)}\biggl(\sum_{x\in\gamma_{{B_r}}} \bigl|\nabla_\ell u_{r}^{\eta}\tparen{\car_{B_r \setminus \set{x}} \cdot\gamma_{B_r}+\delta_\bullet}\bigr|^2(x)\biggr)\phi_r^\eta(\gamma) \diff \QP_r^\eta(\gamma) \diff \QP(\eta) \notag
\\
&=  \frac{1}{2} \int_{\U} \int_{\U(B_r)} \sum_{k =0}^\infty \Bigl|\nabla^{\odot k}_\ell \bigl(u_r^\eta \bigr)\Bigr|^2(\gamma) \phi_r^\eta(\gamma) \diff \QP_r^{\eta}(\gamma)  \diff \QP(\eta) \notag
\\
&=  \frac{1}{2} \int_{\U} \int_{\U(B_r)} \sum_{k =0}^\infty \Bigl|\nabla^{\odot k} \bigl(u_r^\eta \bigr)\Bigr|^2(\gamma) \phi_r^\eta(\gamma) \diff \QP_r^{\eta}(\gamma) \diff \QP(\eta)  \notag
\\
&=  \frac{1}{2} \int_{\U} \int_{\U(B_r)} \cdc^{\U(B_r)}(u_r^\eta)(\gamma)\phi_r^\eta(\gamma) \diff \QP_r^{\eta}(\gamma) \diff \QP(\eta) \comma \notag
\end{align}
where the first equality is the definition of the square field~$\cdc^\dUpsilon_r$; the second equality is the definition of $u_r^\gamma$; the third equality holds as $u^\eta_{r}\tparen{\car_{B_r\setminus\set{x}}\cdot\gamma}$ does not depend on the variable denoted as~$\bullet$ on which the weak gradient~$\nabla$ operates; the fourth and the fifth equalities follow by the conditional probability formula~\eqref{p:ConditionalIntegration} and~\eqref{e:ESU} respectively. The sixth equality follows by~\eqref{e:IUL}. The last equality is the definition~\eqref{d:GSF} of $\cdc^{\U(B_r)}$. This proves that the LHS of \eqref{e:ILT} does not depend on the choice of $\ell$. As $\phi$ runs over every element in $L^2(\U, \QP)$,  the statements (a) and (b) are concluded.

(c): The strong locality and the Markov property of~$\E_{r}^{\U, \mu}$ readily follow from~\eqref{eq:VariousFormsA} and the fact that $\E^{\U(B_r), \mu_r^\eta}$ possesses the corresponding properties for $\QP$-a.e.~$\eta$. 
 We now show the Rademacher-type property: $\Lip_b(\U, \bar{\mssd}_\U, \QP) \subset \mathcal C_r$ and
\begin{align} \label{e:RD}
\cdc^\U_r(u) \le \Lip_{\bar{\mssd}_\U}(u)^2  \cquad u \in \Lip_b(\U, \bar{\mssd}_\U, \QP) \quad r>0\fstop
\end{align}
We first show $\Lip_b(\U, \bar{\mssd}_\U, \QP) \subset \mathcal C_r$. The verification of~\ref{i:d:core1} in Dfn.~\ref{d:core} is obvious. The verification of~\ref{i:d:core2} in Dfn.~\ref{d:core} follows from the Lipschitz contraction~\eqref{e:SEF3} of the operator~$(\cdot)_r^\eta$. To verify~\ref{i:d:core3} in Dfn.~\ref{d:core}, it suffices to show~\eqref{e:RD} as~$\QP$ is a probability measure.   
As the Cheeger energy~$\Ch^{\mssd_\U, \QP_r^{k, \eta}}$ coincides with the form~$\E^{\U(B_r), \QP_r^{k, \eta}}$ by Prop.~\ref{p:BE2}, in particular, the square field $\cdc^{\U(B_r)}$ coincides with the minimal relaxed slope: 
$$\cdc^{\U(B_r)}(u) =|\nabla_{\mssd_\U, \QP_r^{k, \eta}}u|^2_* \qquad \text{$\QP_r^{k, \eta}$-a.e.~} \quad u \in \dom{\E^{\U(B_r), \QP_r^{k, \eta}}} \fstop$$
Thus, by \eqref{i:MSL} and the fact that $\mssd_\U$ and $\bar\mssd_\U$ coincide when restricted in $\U(B_r)$, 
the following Rademacher-type property follows:
\begin{align}\label{e:RD2}
 \cdc^{\U(B_r)}(u) \le \Lip_{\bar{\mssd}_\U}(u)^2 \cquad  u \in \Lip(\U(B_r), \bar\mssd_\U) \quad r>0  \fstop
\end{align}
In view of the relation between~$\cdc^\U_r$ and~$\cdc^{\U(B_r)}$ in~\eqref{eq:p:MarginalWP:0} and the Lipschitz contraction~\eqref{e:SEF3} of the operator~$(\cdot)_r^\eta$, we conclude~\eqref{e:RD}.

By Rem.~\ref{r:ped}, $\Lip_b(\U, \bar\mssd_\U, \QP) \subset L^2(\U, \mu)$ is dense. By $\Lip_b(\U, \bar\mssd_\U, \QP) \subset \mathcal C_r$, the space $\mathcal C_r$ is dense in $L^2(\U, \QP)$ as well, so the form~$(\E_{r}^{\U, \mu}, \mathcal C_r)$ is densely defined. 

We now show the closability. Noting that $\E^{\U(B_r), \QP_r^\eta}$ is closable for $\mu$-a.e.~$\eta$ by Prop.~\ref{p:BE2},  the superposition form~$(\bar{\E}^{\U, \QP}_r,\dom{\bar{\E}^{\U, \QP}_r})$ (defined below in Dfn.~\ref{d:SPF}) is closable (indeed it is closed) by \cite[Prop.~V.3.1.1]{BouHir91}. As the two forms~$(\E_{r}^{\U, \mu}, \mathcal C_r)$ and~$(\bar{\E}^{\U, \QP}_r,\dom{\bar{\E}^{\U, \QP}_r})$ coincide on $\mathcal C_r$ by definition and $\mathcal C_r\subset \dom{\bar{\E}^{\U, \QP}_r}$ by construction, the closability of~$(\E_{r}^{\U, \mu}, \mathcal C_r)$ is inherited from the closedness of the superposition form~$(\bar{\E}^{\U, \QP}_r,\dom{\bar{\E}^{\U, \QP}_r})$. 
 The proof is complete.
\end{proof}
\subsection{Superposition form}
In this section, we study another Dirichlet form on $\U$ lifted from $\E^{\U(B_r), \QP_r^\eta}$, called {\it superposition Dirichlet form} (also called {\it direct integral}). 
By definition, the domain of the superposition form is larger than $\dom{\E^{\U, \QP}_r}$. We identify these two Dirichlet forms by using the stability of~$\mathcal C_r$ under the action of the semigroup $\bar{T}_{t, r}^{\U, \QP}$ associated with the superposition form. Due to this identification, we can express the $L^2$-semigroup~$T_{t, r}^{\U, \QP}$ by the superposition semigroup $\bar{T}_{t, r}^{\U, \QP}$ in Cor.~\ref{prop: 1}, which plays a key role to lift the $\BE(0,\infty)$ gradient estimate from $\E^{\U(B_r), \QP_r^\eta}$. 
\begin{defs}[Superposition Dirichlet form,~e.g., {\cite[Prop.\ V.3.1.1]{BouHir91}}] \label{d:SPF}
\begin{align} \label{eq:SP} 
\mathcal D(\bar{\E}^{\U, \mu}_r)&:= \biggl\{u \in L^2(\U, \QP): \ u_r^\eta \in \dom{\E^{\U(B_r), \QP_r^\eta}}\ \text{$\QP$-a.e.~$\eta$} \comma
\\
&\hspace{32mm} \int_\dUpsilon  \E^{\U(B_r), \QP_r^\eta}(u_{r}^\eta) \diff\QP(\eta)<+\infty  \biggr\} \comma \notag
\\
\bar{\E}^{\U, \QP}_r(u)&:=\int_\dUpsilon  \E^{\U(B_r), \QP_r^\eta}(u_{r}^\eta) \diff\QP(\eta) \fstop \notag
\end{align}
It is known that $(\bar{\E}^{\U, \QP}_r,\dom{\bar{\E}^{\U, \QP}_r})$ is a symmetric Dirichlet form on~$L^2(\U, \mu)$ \cite[Prop.\ V.3.1.1]{BouHir91}.  The $L^2$-semigroup and the infinitesimal generator corresponding to $(\bar{\E}^{\U, \QP}_r,\dom{\bar{\E}^{\U, \QP}_r})$  are denoted by $\sem{\bar{T}_{r, t}^{\U,\QP}}$ and $(\bar{A}_r^{\U, \QP}, \dom{\bar{A}_r^{\U, \QP}})$ respectively. 
\end{defs}
The semigroup $\sem{\bar{T}_{r, t}^{\U, \QP}}$ corresponding to the superposition form $\bar{\E}_r^{\U, \QP}$ can be obtained as the superposition of  the semigroup $\sem{T^{\U(B_r), \QP_r^\eta}_{t}}$ associated with the form~$\E^{\U(B_r), \QP_r^\eta}$. For the following proposition, we refer the reader to~\cite[(iii) Prop.~2.13]{LzDS20}.
\begin{prop}[{\cite[(iii) Prop.~2.13]{LzDS20}}]\label{prop: 1-1}
The following holds: 
	\begin{align} \label{eq: R-1}
		\bar{T}_{r,t}^{\U, \QP}u(\gamma) & = T^{\U(B_r), \QP_r^\gamma}_{t} u_{r}^{\gamma}(\gamma_{B_r}) \, ,
	\end{align}
	for $\QP$-a.e.\ $\gamma\in \dUpsilon$ and every $t \ge 0$.
\end{prop}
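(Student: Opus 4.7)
The plan is to exploit the fact that the superposition form $(\bar{\E}_r^{\U,\QP}, \dom{\bar{\E}_r^{\U,\QP}})$ is, by its very definition \eqref{eq:SP}, a direct integral of the fiber forms $\E^{\U(B_r),\QP_r^\eta}$ over $(\U,\QP)$, and then to transfer this decomposition from the form level to the semigroup level.

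First, I would set up the direct integral decomposition. The disintegration formula \eqref{p:ConditionalIntegration} applied to $u^2$ gives
\begin{align*}
\|u\|_{L^2(\QP)}^2 = \int_\U \|u_r^\eta\|_{L^2(\QP_r^\eta)}^2 \diff \QP(\eta),
\end{align*}
so the map $\Phi: u \mapsto (u_r^\eta)_{\eta \in \U}$ is an isometric isomorphism from $L^2(\QP)$ onto the direct integral Hilbert space $\int_\U^\oplus L^2(\U(B_r), \QP_r^\eta) \diff \QP(\eta)$. Under $\Phi$, the superposition form $\bar{\E}_r^{\U,\QP}$ is, by definition, the direct integral $\int_\U^\oplus \E^{\U(B_r),\QP_r^\eta} \diff \QP(\eta)$.

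Second, I would appeal to the standard direct integral theory for closed symmetric forms on Hilbert spaces (this is the content of \cite[Prop.\ V.3.1.1]{BouHir91}, which is invoked in Def.~\ref{d:SPF}): the infinitesimal generator of a direct integral of closed forms is the direct integral of the fiber generators, and consequently the same holds for resolvents and semigroups by functional calculus. Hence, for every $u \in L^2(\QP)$,
\begin{align*}
\bigl( \bar{T}_{r,t}^{\U,\QP} u\bigr)_r^\eta = T_t^{\U(B_r),\QP_r^\eta} u_r^\eta \quad \text{in $L^2(\QP_r^\eta)$, \ for $\QP$-a.e.\ $\eta \in \U$.}
\end{align*}
Evaluating both sides at $\gamma_{B_r}$ and using the identity $v(\gamma)=v_r^\gamma(\gamma_{B_r})$ (with $v=\bar{T}_{r,t}^{\U,\QP} u$) on the left-hand side yields the claimed formula \eqref{eq: R-1} for $\QP$-a.e.\ $\gamma$.

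The main obstacle is the joint measurability needed to make sense of the right-hand side of \eqref{eq: R-1}, namely that $\gamma \mapsto T_t^{\U(B_r), \QP_r^\gamma} u_r^\gamma(\gamma_{B_r})$ is a well-defined $\QP$-measurable function on $\U$ (and does not depend on the choice of $\QP$-version). This is handled via the measurable structure of the conditional probabilities $\eta \mapsto \QP_r^\eta$ and the measurable selection of the fiber semigroups in the direct integral construction; the bi-measure-preserving property recorded in \eqref{r:BMP}, together with the continuity of the fiber densities $e^{-\Psi_r^{k,\eta}}$ established in the proof of Prop.~\ref{p:form2}, provides the regularity in $\eta$ needed for this step. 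Given that this measurability is in place, the identification is an immediate consequence of the abstract direct integral theory, which is exactly the route taken in \cite[Prop.~2.13]{LzDS20} in a closely analogous setting.
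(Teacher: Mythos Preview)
Your proposal is correct and follows essentially the same route as the paper: the paper does not give an independent proof but cites \cite[(iii) Prop.~2.13]{LzDS20}, and the accompanying remark explains that the present setting is a particular case of the direct integral framework developed there, via the bi-measure-preserving identification~\eqref{r:BMP}. Your sketch reproduces exactly this direct integral argument (disintegration of $L^2(\QP)$, transfer from forms to semigroups by functional calculus, and the measurability caveat), so there is nothing to add.
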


We now discuss the relation between $\E_r^{\U, \QP}$ and $\bar{\E}_r^{\U, \QP}$. As the former form is constructed as the smallest closed extension of $(\E_r^{\U, \QP}, \mathcal C_r)$, it is clear by definition that 
\begin{align} \label{i:OI}
\E_r^{\U, \QP}=\bar{\E}_r^{\U, \QP} \quad \text{on} \quad \mathcal C_r \comma \quad \dom{\E_r^{\U, \QP}} \subset \dom{\bar{\E}_r^{\U, \QP}} \fstop
\end{align}
The following theorem proves that the opposite inclusion holds as well. 
\begin{thm} \label{t:S=M}
$(\E_r^{\U, \QP}, \dom{\E_r^{\U, \QP}}) = (\bar{\E}_r^{\U, \QP}, \dom{\bar{\E}_r^{\U, \QP}})$. 
\end{thm}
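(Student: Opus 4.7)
The inclusion $\dom{\E_r^{\U, \QP}} \subset \dom{\bar{\E}_r^{\U, \QP}}$ with matching form values is already observed in the paragraph preceding the theorem. I would prove the reverse inclusion $\dom{\bar{\E}_r^{\U, \QP}} \subset \dom{\E_r^{\U, \QP}}$ by showing that $\mathcal C_r$ is dense in $\dom{\bar{\E}_r^{\U, \QP}}$ with respect to the $\bar{\E}_{r,1}^{\U, \QP}$-norm. Because both forms coincide on $\mathcal C_r$ by the disintegration identity~\eqref{eq:p:MarginalWP:0}, any sequence in $\mathcal C_r$ that is Cauchy in the $\bar{\E}_{r,1}^{\U, \QP}$-norm is automatically Cauchy in the $\E_{r,1}^{\U, \QP}$-norm, so its limit lies in the closure $\dom{\E_r^{\U, \QP}}$.

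Given $u \in \dom{\bar{\E}_r^{\U, \QP}}$, my strategy is a two-stage regularisation. First, Markov truncation $u^n := (-n) \vee u \wedge n$ converges to $u$ in the $\bar{\E}_{r,1}^{\U, \QP}$-norm by standard dominated-convergence arguments for Dirichlet forms; this reduces matters to the bounded case. Second, apply the associated semigroup and set $u_t := \bar{T}_{r,t}^{\U, \QP} u^n$; by general $C_0$-semigroup theory for closed forms, $u_t \to u^n$ in the $\bar{\E}_{r,1}^{\U, \QP}$-norm as $t \to 0^+$. It therefore suffices to verify $u_t \in \mathcal C_r$ for every $t>0$ and every bounded $u^n$.

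For the verification, I would use the slicewise representation from Prop.~\ref{prop: 1-1}: since both $\QP_r^\eta$ and the section map $(\cdot)_r^\eta$ depend only on $\eta_{B_r^\complement}$, this yields
\begin{equation*}
(u_t)_r^\eta = T_t^{\U(B_r), \QP_r^\eta} (u^n)_r^\eta \qquad \text{for $\QP$-a.e.\ }\eta \fstop
\end{equation*}
Boundedness $\|u_t\|_{L^\infty(\QP)} \le n$ is immediate from sub-Markovianity, giving condition (a) of Def.~\ref{d:core}. The key condition (b) is the Lipschitz-Feller regularisation: Prop.~\ref{p:BE2} ensures each fiber $(\U(B_r), \mssd_\U, \QP_r^\eta)$ satisfies $\RCD(0,\infty)$, so the standard $L^\infty$-to-$\mathrm{Lip}$ bound on $\RCD(0,\infty)$ spaces furnishes a Lipschitz representative with $\Lip_{\mssd_\U}((u_t)_r^\eta) \le n/\sqrt{2t}$ uniformly in $\eta$. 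Finally, condition (c), the finiteness of the $\cdc^\U_r$-integral, follows from $\bar{\E}_r^{\U, \QP}(u_t) < \infty$ combined with~\eqref{eq:p:MarginalWP:0}.

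The principal obstacle I anticipate is a measurability issue: the $\RCD$ Lipschitz-Feller property furnishes a Lipschitz representative on each individual slice, and one must ensure that these slicewise representatives assemble into a single $\QP$-measurable function on $\U$ coinciding (up to $\QP$-nullsets) with $\bar{T}_{r,t}^{\U, \QP} u^n$. This is guaranteed by the direct-integral construction of $\bar{T}_{r,t}^{\U, \QP}$ from the fiberwise semigroups $T_t^{\U(B_r), \QP_r^\eta}$ together with the uniformity in $\eta$ of the Lipschitz bound, which allows the selection of a jointly measurable Lipschitz modification on~$\U$. Once this is in place, $u_t \in \mathcal C_r$ and passing $t \to 0^+$ followed by $n \to \infty$ concludes $u \in \dom{\E_r^{\U, \QP}}$.
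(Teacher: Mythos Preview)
Your proposal is correct and follows essentially the same strategy as the paper: both arguments reduce the density of $\mathcal C_r$ in $\dom{\bar{\E}_r^{\U, \QP}}$ to the fact that the superposed semigroup $\bar{T}_{r,t}^{\U,\QP}$ maps bounded functions into $\mathcal C_r$, and both verify this via the slicewise identification of Prop.~\ref{prop: 1-1} together with the $L^\infty$-to-$\Lip$ regularisation on the $\RCD(0,\infty)$ fibres from Prop.~\ref{p:BE2}. The only cosmetic difference is that the paper packages the density step through the abstract invariance criterion Lem.~\ref{l:MU} (showing $\bar{T}_{r,t}^{\U,\QP}\mathcal C_r\subset\mathcal C_r$), whereas you perform a direct two-stage approximation (Markov truncation followed by semigroup smoothing); your measurability worry is not a genuine obstruction, since condition~(b) in Def.~\ref{d:core} concerns the $\QP_r^\eta$-class having a Lipschitz representative, which is exactly what the $\RCD$ regularisation delivers fibre-by-fibre.
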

\begin{proof}
In view of \eqref{i:OI} and the fact that $\dom{\E^{\U, \QP}_r}$ is the closure of~$\mathcal C_r$, 
it suffices to show that $\mathcal C_r\subset \dom{\bar{\E}^{\U, \QP}_r}$ is dense. 
Thanks to Lem.~\ref{l:MU}, it is sufficient to show $\bar{T}_{r, t}^{\U, \QP}\mathcal C_r \subset \mathcal C_r$ for every $t>0$. So, we now verify (a)--(c) in Dfn.~\ref{d:core} for~$\bar{T}_{r, t}^{\U, \QP}\mathcal C_r$.

\paragraph{Verification of (a) in Dfn.~\ref{d:core}}
Since $\bar{T}_{r, t}^{\U, \QP}$ contracts the $L^\infty$-norm by~\eqref{e:con1},  
we obtain $\bar{T}_{r, t}^{\U, \QP} \mathcal C_r \subset L^\infty(\U, \mu)$, which verifies (a) in Dfn.~\ref{d:core}.

\paragraph{Verification of (b) in Dfn.~\ref{d:core}} Let $u \in \mathcal C_r$.
By Prop.~\ref{prop: 1-1}, we can identify the following two operators:
\begin{align*} 
\bar{T}_{r, t}^{\U, \QP} u(\cdot)=T_{t}^{\U(B_r), \QP_r^\cdot}u_{r}^{\cdot}(\cdot_{B_r})  \fstop
\end{align*}
This implies that 
\begin{align*} 
\Bigl( \bar{T}_{r, t}^{\U, \QP} u \Bigr)_r^\eta(\cdot)=\bar{T}_{r, t}^{\U, \QP} u (\cdot + \eta_{B_r^c})=T_{t}^{\U(B_r), \QP_r^\eta}u_{r}^{\eta}(\cdot)   \fstop
\end{align*}
Take $k=k(\eta)$ as in \eqref{e:R1}. As $\QP_{r}^{\eta}$ is supported on $\U^k(B_r)$, we only need to show 
\begin{align}\label{e:St1}
T_{t}^{\U^k(B_r), \QP_r^{k, \eta}}u_{r}^{\eta} \in \Lip_b(\U^k(B_r), \mssd_\U) \fstop
\end{align}
As $(\U^k(B_r), \mssd_\U, \mu_r^{k, \gamma})$ is $\RCD(0,\infty)$ for $k=k(\eta)$ for $\QP$-a.e.~$\eta$ due to~Prop.~\ref{p:BE2}, the corresponding semigroup satisfies 
the~$\Lip(\U^k(B_r), \mssd_\U)$-contraction property (\cite[(iv) Thm.\ 6.1]{AmbGigSav14}), which shows that for $\QP$-a.e.~$\eta$
\begin{align*} 
T_{t}^{\U^k(B_r), \QP_r^{k, \eta}}u_r^\eta \in \Lip_b(\U^k(B_r), \mssd_\U) \comma
\end{align*}
and its Lipchitz constant is bounded as 
$$\Lip_{\mssd_\U}(T_{t}^{\U^k(B_r), \QP_r^{k, \eta}}u_r^\eta) \le \Lip_{\mssd_\U}(u_r^\eta)  \comma$$
which completes the verification of~\ref{i:d:core2}.

\paragraph{Verification of (c) in Dfn.~\ref{d:core}} Let $u \in \mathcal C_r$. Thanks to the verification of (b), the square field~$\cdc^{\dUpsilon}_r(\bar{T}_{r, t}^{\U, \QP} u)$ is well-defined. In view of~\eqref{eq:p:MarginalWP:0}, it holds that for $\QP$-a.e.~$\eta$
\begin{align} \label{e:RLSP}
\cdc^{\dUpsilon}_r(\bar{T}_{r, t}^{\U, \QP} u)(\gamma+\eta_{B_r^c}) &= \cdc^{\dUpsilon(B_r)}\bigl((\bar{T}_{r, t}^{\U, \QP} u)_{r}^\eta\bigr)(\gamma) \quad \text{$\mu_{r}^\eta$-a.e.~$\gamma \in \U(B_r)$} \fstop 
\end{align}
By the contraction property \eqref{e:con}, we have 
$$\E^{\U(B_r), \QP_r^{\eta}}(T_{t}^{\U(B_r), \QP_r^{\eta}} u_{r}^\eta) \le \E^{\U(B_r), \QP_r^{\eta}}(u_{r}^\eta) \fstop$$
By Prop.~\ref{prop: 1-1} and \eqref{e:RLSP}, we obtain
\begin{align*}
\frac{1}{2}\int_{\U} \cdc^{\dUpsilon}_r(\bar{T}_{r, t}^{\U, \QP} u) \diff \QP &= \int_{\U} \E^{\U(B_r), \QP_r^{\eta}}\bigl((\bar{T}_{r, t}^{\U, \QP} u)_{r}^\eta \bigr)  \diff \mu(\eta)
\\
&= \int_{\U} \E^{\U(B_r), \QP_r^{\eta}}(T_{t}^{\U(B_r), \QP_r^{\eta}} u_{r}^\eta)  \diff \mu(\eta)
\\
&\le   \int_{\U} \E^{\U(B_r), \QP_r^{\eta}}(u_{r}^\eta) \diff\mu(\eta) 
\\
&=   \E_r^{\U, \QP}(u) <+\infty \comma
\end{align*}
which concludes the statement. 
\end{proof}

As a consequence of~Prop.~\ref{prop: 1-1} and Thm.~\ref{t:S=M} , we obtain the superposition formula for  the semigroup $\sem{T_{r, t}^{\U, \QP}}$ in terms of the semigroup $\sem{T_t^{\U(B_r), \QP_r^\eta}}$. 
\begin{cor}[Coincidence of semigroups]\label{prop: 1}
The following three operators coincide: 
	\begin{align} \label{eq: R-1}
		T_{r, t}^{\U, \QP}u(\gamma) & = \bar{T}_{r, t}^{\U, \QP}u(\gamma) =T^{\U(B_r), \QP_r^\gamma}_{t} u_{r}^\gamma(\gamma_{B_r}) \, ,
	\end{align}
	for $\QP$-a.e.\ $\gamma\in \dUpsilon$ and every $t>0$.
\end{cor}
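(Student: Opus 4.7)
The corollary is essentially a two-line assembly of earlier results, so my proof plan is correspondingly short. The key observation is that the semigroup associated to a symmetric Dirichlet form is uniquely determined by the form (via the generator and the spectral calculus, or equivalently via the resolvent identity $G_\alpha u = \int_0^\infty e^{-\alpha t} T_t u\, \diff t$), so the equality of forms immediately yields the equality of semigroups.

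My plan is as follows. First, I would invoke Theorem~\ref{t:S=M}, which states that
\begin{equation*}
\bigl(\E_r^{\U, \QP}, \dom{\E_r^{\U, \QP}}\bigr) = \bigl(\bar{\E}_r^{\U, \QP}, \dom{\bar{\E}_r^{\U, \QP}}\bigr)
\end{equation*}
as symmetric Dirichlet forms on $L^2(\QP)$. Since a symmetric Dirichlet form uniquely determines its associated $L^2$-semigroup (e.g., \cite[Chap.~I, \S 2]{MaRoe90}), this identity transfers to the semigroup level:
\begin{equation*}
T_{r, t}^{\U, \QP} u = \bar{T}_{r, t}^{\U, \QP} u \qquad \text{in } L^2(\QP), \quad t > 0, \quad u \in L^2(\QP) \fstop
\end{equation*}
This gives the first equality in~\eqref{eq: R-1} (pointwise $\QP$-a.e., after choosing appropriate $\QP$-versions).

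Second, I would apply Proposition~\ref{prop: 1-1} directly to $\bar{T}_{r, t}^{\U, \QP}$, which gives
\begin{equation*}
\bar{T}_{r, t}^{\U, \QP} u(\gamma) = T^{\U(B_r), \QP_r^\gamma}_{t} u_{r}^\gamma(\gamma_{B_r}) \qquad \text{for $\QP$-a.e. } \gamma \in \U, \quad t > 0\fstop
\end{equation*}
Chaining the two equalities yields the full statement. There is no genuine obstacle here; the only subtlety worth a brief remark is that the equality $T_{r,t}^{\U, \QP} = \bar{T}_{r,t}^{\U, \QP}$ is initially in the $L^2(\QP)$ sense, but since Proposition~\ref{prop: 1-1} already exhibits a pointwise-defined $\QP$-version of $\bar{T}_{r,t}^{\U, \QP} u$ via the right-hand side of~\eqref{eq: R-1}, we may take this as the common $\QP$-version of both semigroup operators, and the stated chain of equalities holds $\QP$-a.e.
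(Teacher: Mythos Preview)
Your proposal is correct and matches the paper's approach exactly: the corollary is stated as an immediate consequence of Thm.~\ref{t:S=M} and Prop.~\ref{prop: 1-1}, with no additional argument given beyond the observation that equal Dirichlet forms have equal semigroups.
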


\begin{rem}[{Comparison with~\cite{KawOsaTan21}}] \label{r:IDI}
As a result of Cor.~\ref{prop: 1}, the form $(\E^{\U, \QP}_r, \dom{\E^{\U, \QP}_r})$ coincides with the form $(\E_r^{\mathsf{lwr}}, \mathcal D_r^{\mathsf{lws}})$ defined in \cite[line 8, p.644]{KawOsaTan21} for $\beta=1,2,4$: as noted in Rem.~\ref{r:CK}, the semigroup~$T^{\U(B_r), \QP_r^\eta}_{t}$ corresponds the finite-particle Dyson SDE~\cite[(2.40)--(2.43)]{KawOsaTan21} with the configurations outside $B_r$ conditioned to be $\eta_{B_r^c}$ and with the reflecting boundary condition at $\partial B_r$. Hence, by~the expression~\eqref{eq: R-1}, the semigroup $\{T_{r, t}^{\U, \QP}\}_{t \ge 0}$ gives the transition probability of the solution to the same SDE~\cite[(2.40)--(2.43)]{KawOsaTan21} with the same boundary condition, but without conditioning the configuration outside $B_r$ to be a particular $\eta$.  In this case, the configuration~$\eta$ outside $B_r$ is randomly chosen according to the law~$\QP$ at $t=0$, and will stay at the same configuration for $t>0$. By the argument~on~\cite[p.~653]{KawOsaTan21}, this transition probability corresponds to~the semigroup associated with $(\E_r^{\mathsf{lwr}}, \mathcal D_r^{\mathsf{lws}})$. These two Dirichlet forms $(\E^{\U, \QP}_r, \dom{\E^{\U, \QP}_r})$ and  $(\E_r^{\mathsf{lwr}}, \mathcal D_r^{\mathsf{lws}})$, therefore,  have the same $L^2(\U, \QP)$-semigroup, which concludes 
$$(\E^{\U, \QP}_r, \dom{\E^{\U, \QP}_r})=(\E_r^{\mathsf{lwr}}, \mathcal D_r^{\mathsf{lws}}) \cquad r>0\fstop$$
\end{rem}

\subsection{Infinite-volume limit of Dirichlet forms}
We now construct the infinite-volume limit of $(\E^{\U, \QP}_r, \dom{\E^{\U, \QP}_r})$ as $r \to \infty$, which is a strongly local symmetric Dirichlet form  whose symmetrising measure is  $\sine_\beta$.
A key property is the following monotonicity, which can be immediately seen by the definition~\eqref{eq:d:LiftCdCRep} of the square field $\cdc^{\U}_{r, \ell}$, while it would not be immediate if we only used the superposition form~$(\bar{\E}^{\U, \QP}_r, \dom{\bar{\E}^{\U, \QP}_r})$. 
\begin{prop}[Monotonicity]\label{p:mono}
The form $(\E^{\U, \QP}_r, \dom{\E^{\U, \QP}_r})$ and the square field $\Gamma^\U_r$ are monotone increasing as $r \uparrow \infty$, viz., 
$$\cdc^\U_r(u) \le \cdc^\U_s(u)\comma \quad \E^{\U, \QP}_r(u) \le \E^{\U, \QP}_s(u) \comma \quad \dom{\E^{\U, \QP}_s} \subset \dom{\E^{\U, \QP}_r} \quad r\le s \fstop$$
\end{prop}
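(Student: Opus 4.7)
The plan is to first establish the pointwise monotonicity of the truncated square fields, then use it to identify $\mathcal C_s \subset \mathcal C_r$ for $r \le s$, and finally promote the inequality from the cores to the closed forms by $L^2$-lower semicontinuity.

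\textbf{Step 1 (pointwise monotonicity of the square fields).} For $r \le s$ and any $\gamma \in \U$, the restriction $\gamma_{B_r}$ is a sub-configuration of $\gamma_{B_s}$ since $B_r \subset B_s$. Consequently, for every $u$ such that both $\cdc^\U_r(u)(\gamma)$ and $\cdc^\U_s(u)(\gamma)$ are defined (in particular, for $u \in \mathcal C_s$, as I verify in Step~2), the defining formula~\eqref{eq:d:LiftCdCRep} immediately gives
\begin{align*}
\cdc^\U_r(u)(\gamma) = \sum_{x\in \gamma_{B_r}} |\nabla \mathcal U_{\gamma, x}(u)|^2(x) \le \sum_{x\in \gamma_{B_s}} |\nabla \mathcal U_{\gamma, x}(u)|^2(x) = \cdc^\U_s(u)(\gamma),
\end{align*}
as the added terms are non-negative. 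This is the heart of the proposition.

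\textbf{Step 2 (inclusion of cores $\mathcal C_s \subset \mathcal C_r$).} Let $u \in \mathcal C_s$. Property \ref{i:d:core1} is intrinsic to $u$ and independent of the truncation radius, so $u \in L^\infty(\mu)$ holds for $\mathcal C_r$ as well. For \ref{i:d:core2}, fix $\eta$ in the full-measure set on which $u_s^\eta \in \Lip_b(\U(B_s), \mssd_\U)$. For any $\gamma_1, \gamma_2 \in \U(B_r)$, setting $\tilde{\gamma_i}:=\gamma_i + \eta_{B_s\setminus B_r} \in \U(B_s)$, one has $u_r^\eta(\gamma_i) = u(\gamma_i + \eta_{B_r^c}) = u_s^\eta(\tilde{\gamma_i})$; using the optimal coupling for $(\gamma_1, \gamma_2)$ together with the identity coupling on the common part $\eta_{B_s\setminus B_r}$ one gets $\mssd_\U(\tilde{\gamma_1}, \tilde{\gamma_2}) \le \mssd_\U(\gamma_1, \gamma_2)$, and hence $\Lip_{\mssd_\U}(u_r^\eta) \le \Lip_{\mssd_\U}(u_s^\eta) < \infty$, giving \ref{i:d:core2} for radius $r$. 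Finally, \ref{i:d:core3} follows by integrating Step~1 against $\QP$:
\begin{align*}
\E^{\U, \QP}_r(u) = \tfrac{1}{2}\int_{\U} \cdc^\U_r(u) \diff \QP \le \tfrac{1}{2}\int_{\U} \cdc^\U_s(u) \diff \QP = \E^{\U, \QP}_s(u) <\infty.
\end{align*}

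\textbf{Step 3 (extension to the closed forms).} By Prop.~\ref{t:ClosabilitySecond}, both $(\E^{\U,\QP}_r, \dom{\E^{\U,\QP}_r})$ and $(\E^{\U,\QP}_s, \dom{\E^{\U,\QP}_s})$ are closed Dirichlet forms on $L^2(\QP)$, obtained as the closures of their respective cores. Given $u \in \dom{\E^{\U,\QP}_s}$, choose $u_n \in \mathcal C_s$ converging to $u$ in $L^2(\QP)$ with $\E^{\U,\QP}_s(u_n - u_m) \to 0$. By Step~2, $u_n \in \mathcal C_r$ and Step~1 (applied to $u_n - u_m \in \mathcal C_s$) yields $\E^{\U,\QP}_r(u_n - u_m) \le \E^{\U,\QP}_s(u_n - u_m) \to 0$, so $(u_n)$ is Cauchy in the $\E^{\U,\QP}_r$-norm. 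Closedness of $(\E^{\U,\QP}_r, \dom{\E^{\U,\QP}_r})$ then gives $u \in \dom{\E^{\U,\QP}_r}$. Moreover, by $L^2$-lower semicontinuity of $\E^{\U,\QP}_r$ and Step~2 applied to $u_n$,
\begin{align*}
\E^{\U,\QP}_r(u) \le \liminf_{n\to\infty} \E^{\U,\QP}_r(u_n) \le \liminf_{n\to\infty} \E^{\U,\QP}_s(u_n) = \E^{\U,\QP}_s(u).
\end{align*}

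The only point requiring any care is the verification of~\ref{i:d:core2} in Step~2, since the definition of $u_r^\eta$ versus $u_s^\eta$ absorbs different portions of~$\eta$; everything else is an essentially automatic consequence of the pointwise inequality in Step~1 together with the closedness of the forms.
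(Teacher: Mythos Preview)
Your proof is correct and follows essentially the same approach as the paper's. The paper compresses everything into the claim that ``it suffices to check~$\mathcal C_s \subset \mathcal C_r$ and~$\cdc^\U_r(u) \le \cdc^\U_s(u)$ on~$\mathcal C_s$'', then verifies \ref{i:d:core2} by pointing to the argument of Lem.~\ref{l:SEF3} and the square-field monotonicity by pointing to Def.~\ref{d:DT}; your Steps~1--2 are exactly these verifications spelled out, and your Step~3 simply makes explicit the standard closure/lower-semicontinuity argument that the paper leaves implicit in the phrase ``it suffices to check''.
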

\begin{proof}
As $\mathcal C_r$ is dense in $\dom{\E^{\U, \QP}_r}$, it suffices to check~$\mathcal C_s \subset \mathcal C_r$ and~$\cdc^\U_r(u) \le \cdc^\U_s(u)$ on~$\mathcal C_s$. Let $u \in \mathcal C_s$ and we show $u \in \mathcal C_r$.  
 By a similar argument to~Lem.~\ref{l:SEF3}, 
\begin{align*}
 u_r^\eta \in \Lip_b(\U(B_r), \mssd_\U) \quad \text{$\QP$-a.e.~$\eta$ \ \ if} \ \    u_s^\eta \in \Lip_b(\U(B_s), \mssd_\U) \quad \text{$\QP$-a.e.~$\eta$} \fstop
\end{align*} 
By Dfn.~\ref{d:DT}, it is straightforward to see $\cdc^\U_r(u) \le \cdc^\U_s(u)$. Thus, 
$$\E^{\U, \QP}_{r}(u) =\frac{1}{2}\int_{\U}\cdc^\U_r(u) \diff \mu \le\frac{1}{2} \int_{\U}\cdc^\U_s(u) \diff \mu=\frac{1}{2}\int_{\U}\cdc^\U_s(u) \diff \mu =\E^{\U, \QP}_s(u) <+\infty \fstop$$
Therefore, we conclude $u \in \mathcal C_r$.
\end{proof}

We now define a Dirichlet form on $\U$ whose symmetrising measure is $\sine_\beta$ by the monotone limit of~$(\E^{\U, \QP}_r, \dom{\E^{\U, \QP}_r})$. 
\begin{defs}[Monotone limit form] \label{d:DFF}
Let $\QP$ be $\sine_\beta$ for $\beta>0$.
The form $(\E^{\U, \QP}, \dom{\E^{\U, \QP}})$ is defined as the monotone limit:
\begin{align} \label{eq:Temptation} 
\dom{\E^{\U, \QP}}&:=\{u \in \cap_{r>0} \dom{\E^{\U, \QP}_r}: \E^{\U, \QP}(u) = \lim_{r \to \infty}\E^{\U, \QP}_r(u) <+\infty\} \comma
\\
\E^{\U, \QP}(u)&:=\lim_{r \to \infty}\E^{\U, \QP}_r(u) \cquad 
\E^{\U, \QP}(u,v):=\frac{1}{4}\Bigl(\E^{\U, \QP}(u+v)- \E^{\U, \QP}(u-v)\Bigr) \notag \fstop
\end{align} 
The form $(\E^{\U, \QP}, \dom{\E^{\U, \QP}})$ is a symmetric Dirichlet form on~$L^2(\U, \mu)$ as it is the monotone limit of symmetric Dirichlet forms (e.g., by \cite[Exercise 3.9]{MaRoe90}).
The square field~$\cdc^\U$ is defined as the monotone limit of~$\cdc^\U_r$ as well:
\begin{align} \label{d:SF}
\cdc^\U(u):=\lim_{r \to \infty}\cdc^\U_r(u) \cquad \cdc^\U(u, v)= \frac{1}{4}\Bigl( \cdc^\U(u+v)- \cdc^\U(u-v)\Bigr)\cquad u, v \in \dom{\E^{\U, \QP}}\fstop
\end{align}
The corresponding $L^2(\U, \mu)$-semigroup is denoted by $\sem{T^{\U, \QP}_t}$.
\end{defs}
Recall that ${\sf Cyl}(\U)$ denotes the space of cylinder functions defined in~\eqref{d:CLF}.
\begin{rem}[Non-triviality]\label{r:CLF2}
Note that ${\sf Cyl}(\U) \subset \mathcal D({\E}^{\U, \mu})$. For $U=\Phi(u_1^*,\ldots, u_k^*) \in {\sf Cyl}(\U)$, we have the following expression:
\begin{align} \label{e:SCF2}
\cdc^{\U}(U)=\sum_{i, j=1}^k\partial_i \Phi(u_1^*,\ldots, u_k^*)\partial_j \Phi(u_1^*,\ldots, u_k^*)\langle \nabla u_i, \nabla u_j \rangle^*  \comma
\end{align}
where $\langle \nabla u_i, \nabla u_j \rangle=\bigl(\frac{d}{dx}u_i\bigr)\bigl(\frac{d}{dx}u_j\bigr)$.
In particular,  the form $(\E^{\U, \QP}, \dom{\E^{\U, \QP}})$ is non-trivial in the sense that $\E^{\U, \QP} \not \equiv 0$.
\end{rem}
\begin{proof}
 Take a cylinder function~$U=\Phi(u_1^*,\ldots, u_k^*)$ and define a compact set~$K=\cup_{i=1}^k {\rm supp}[u_i] \subset \R$, where ${\rm supp}[u]$ is the support of $u$. Take a sufficiently large $r_0>0$ such that $K \subset B_{r_0}$. Then, the values of $U$ do not depend on configurations outside~$B_r$, i.e., 
\begin{align}\label{e:SCF}
U(\gamma)=U(\gamma_{B_r})  \cquad \gamma \in \U \fstop 
\end{align}
Take any $r \ge r_0$. 
For each fixed $k \in \N_0$, it is easy to see that $U|_{\U^k(B_r)} \in \Lip_b(\U^k(B_r), \mssd_\U)$. Thus, $U|_{\U^k(B_r)} \in \dom{\E^{\U^k(B_r), \QP_r^{k, \eta}}}$. Furthermore, we have the following expression of~$\cdc^{\U}_r(U)$, see, e.g., \cite[Lem.~1.2]{MaRoe00}:
$$\cdc^{\U}_r(U)=\sum_{i, j=1}^k\partial_i \Phi(u_1^*,\ldots, u_k^*)\partial_j \Phi(u_1^*,\ldots, u_k^*)\langle \nabla u_i, \nabla u_j \rangle^* \fstop$$
Noting that the intensity measure $I_\QP$ of $\QP=\sine_\beta$ is the Lebesgue measure~$\mssm$ multiplied by $\frac{1}{2\pi}$ for every $\beta>0$, we have $\int_{\U}u^* \diff \QP=\frac{1}{2\pi}\int_{\R}u\diff \mssm$ for $u \in C_0(\R)$. Thus, $\cdc^{\U}_r(U) \in L^1(\U, \QP)$ and $U \in \dom{\E^{\U, \QP}_r}$ for every $r \ge r_0$, actually for every $r>0$ by the monotonicity in Prop.~\ref{p:mono}.
Thanks to \eqref{e:SCF}, $\cdc^{\U}_r(U)=\cdc^{\U}_s(U)$ for every $r, s \ge r_0$, which implies~$\cdc^{\U}(U)=\cdc^{\U}_r(U) \in L^1(\U, \QP)$ for every $r \ge r_0$ and concludes~\eqref{e:SCF2}.  Due to the expression~\eqref{e:SCF2},  it is easy to find $U\in {\sf Cyl}(\U)$ such that $\E^{\U, \QP}(U) \neq 0$, which concludes the non-triviality. 
\end{proof}
\begin{rem}[Invariance with respect to $\QP$]
The semigroup $\sem{T^{\U, \QP}_t}$ is invariant with respect to~$\QP$ because $(\E^{\U, \QP}, \dom{\E^{\U, \QP}})$ is symmetric. Indeed, by the symmetry $\E^{\U, \QP}(u,v)=\E^{\U, \QP}(v, u)$ for $u, v \in \dom{\E^{\U, \QP}}$, the semigroup $\sem{T^{\U, \QP}_t}$ has the $L^2$-symmetry 
$$\bigl(v, T_t^{\U, \QP}u\bigr)_{L^2(\QP)} = \bigl(T_t^{\U, \QP}v, u\bigr)_{L^2(\QP)} \cquad u, v \in L^2(\U, \QP) \comma$$ see \cite[Lem.~1.3.2]{FukOshTak11}.  As the indicator function $\1$ belongs to $\dom{\E^{\U, \QP}}$ with $\E^{\U, \QP}(\1)=0$, the mass-preservation (also called {\it conservativeness}) $T_t^{\U, \QP}\1=\1$ holds, see, e.g., \cite[Thm.~1.6.6]{FukOshTak11}. Thus,  we have
\begin{align}\label{d:SF1} 
\int_{\U} T_t^{\U, \QP}u \diff \QP = \int_{\U} uT_t^{\U, \QP}\1 \diff \QP = \int_{\U} u \diff \QP \quad u \in L^2(\U, \QP) \fstop
\end{align}
The invariance \eqref{d:SF1} extends to $u \in L^1(\U, \QP)$ by the standard approximation by truncation $u_n:=(-n) \vee u \wedge n \in L^2(\U, \QP)$ with the fact that $T_t^{\U, \QP}$ extends to a bounded linear operator on $L^1(\U, \QP)$ with the contraction property~$\|T_t^{\U, \QP}u\|_{L^1(\U, \QP)} \le \|u\|_{L^1(\U, \QP)}$, see \eqref{e:con1}.
\end{rem}
\begin{rem}[{Quasi-regularity and comparison with~\cite{KawOsaTan21}}] \label{r:IDI2}
For $\beta=1,2,4$, the form $(\E^{\U, \QP}, \dom{\E^{\U, \QP}})$ coincides with the lower Dirichlet form~$(\E^{\mathsf{lwr}}, \mathcal D^{\mathsf{lwr}})$ defined in~\cite[p.~644]{KawOsaTan21} because  $(\E_r^{\U, \QP}, \dom{\E_r^{\U, \QP}}) = (\E_r^{\mathsf{lwr}}, \mathcal D_r^{\mathsf{lwr}})$ for every $r>0$ as discussed in~Rem.~\ref{r:IDI}, and $(\E^{\mathsf{lwr}}, \mathcal D^{\mathsf{lwr}})$ is defined as the monotone increasing limit of $(\E_r^{\mathsf{lwr}}, \mathcal D_r^{\mathsf{lwr}})$. 
In view of \cite[Thm.~3.2 and \S7.1]{KawOsaTan21}, $(\E^{\U, \QP}, \dom{\E^{\U, \QP}})$ also coincides with the upper Dirichlet form~$(\E^{\mathsf{upr}}, \mathcal D^{\mathsf{upr}})$ when $\QP=\sine_\beta$ with $\beta=1, 2, 4$. This implies that $(\E^{\U, \QP}, \dom{\E^{\U, \QP}})$ is quasi-regular with respect to the vague topology~$\tau_\mrmv$, see~\S\ref{ss:DF} for the definition.  Thus, there exists an associated diffusion process whose transition semigroup coincides with the $L^2$-semigroup $T_t^{\U, \QP}$ quasi-everywhere. In~\cite[Thm.~24]{Osa12} (see also \cite[\S8]{Tsa16}),  this diffusion process was identified to the solution $(\mathsf X_t, \mathbb P_\gamma)$ to the unlabelled infinite Dyson Brownian SDE~\eqref{d:DBMS} with $\beta=1, 2, 4$ in the sense that the semigroup~$T_t^{\U, \QP}$ gives the transition probability of the unlabelled solution to~\eqref{d:DBMS} for quasi-every stating point:
$$T_t^{\U, \QP}u(\gamma)=\mathbb E_\gamma[u(\mathsf X_t)] \quad \text{q.e.~$\gamma$} \cquad t>0 \cquad u \in \mathcal B_b(\U) \fstop$$ 
\end{rem}

We now show that, for every~$\beta>0$, the form~$(\E^{\U, \QP}, \dom{\E^{\U, \QP}})$ is a strongly local symmetric Dirichlet form on~$L^2(\U, \QP)$ and satisfies the Rademacher-type property  with respect to the $L^2$-transportation-type distance~$\bar{\mssd}_\U$. Recall that $\Lip_b(\U, \bar\mssd_\U, \QP)$ denotes the space of $\QP$-measurable bounded $\bar{\mssd}_{\U}$-Lipschitz functions on $\U$.
\begin{prop}\label{p:DF}
The form $(\E^{\U, \QP}, \dom{\E^{\U, \QP}})$ is a strongly local symmetric Dirichlet form on~$L^2(\U, \QP)$.
Furthermore,  $(\E^{\U, \QP}, \dom{\E^{\U, \QP}})$ satisfies the Rademacher-type property:
\begin{align}\label{p:Rad}
\Lip(\U, \bar{\mssd}_\U, \QP) \subset \dom{\E^{\U, \QP}}\comma \quad \cdc^\U(u) \le \Lip_{\bar{\mssd}_\U}(u)^2 \qquad  u \in \Lip(\U, \bar{\mssd}_\U, \QP) \fstop
\end{align}
\end{prop}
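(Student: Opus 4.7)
The plan is to transfer both properties from the truncated forms $(\E^{\U, \QP}_r, \dom{\E^{\U, \QP}_r})$, for which the corresponding properties were already established in Prop.~\ref{t:ClosabilitySecond}, to the monotone limit $(\E^{\U, \QP}, \dom{\E^{\U, \QP}})$ defined in Def.~\ref{d:DFF}. The fact that the monotone limit of Dirichlet forms is a Dirichlet form is already recorded in Def.~\ref{d:DFF}, so only locality and the Rademacher-type estimate remain to be checked.

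\emph{Locality.} By Prop.~\ref{t:ClosabilitySecond} each $\E^{\U, \QP}_r$ is a local Dirichlet form. I will transfer this through the monotone limit. First observe that Prop.~\ref{p:mono} yields $\dom{\E^{\U, \QP}} \subset \dom{\E^{\U, \QP}_r}$ for every $r>0$, and $\E^{\U, \QP}_r(u) \uparrow \E^{\U, \QP}(u)$ for $u \in \dom{\E^{\U, \QP}}$; polarisation then gives $\E^{\U, \QP}_r(u,v) \to \E^{\U, \QP}(u,v)$ for every $u, v \in \dom{\E^{\U, \QP}}$. Next, for $u \in \dom{\E^{\U, \QP}} \cap L^\infty(\QP)$ and $F, G \in C_c^\infty(\R)$ with $F(0)=G(0)=0$, the Markov property of $\E^{\U, \QP}$ combined with standard truncation arguments (contractions acting on a Dirichlet form) gives $F \circ u,\ G \circ u \in \dom{\E^{\U, \QP}}\subset \dom{\E^{\U, \QP}_r}$. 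Locality of each $\E^{\U, \QP}_r$ then yields $\E^{\U, \QP}_r(F_0 \circ u, G_0 \circ u) = 0$ whenever $\mathrm{supp}[F]\cap\mathrm{supp}[G]=\emptyset$; passing $r\to\infty$ concludes locality of $\E^{\U, \QP}$.

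\emph{Rademacher-type estimate.} Let $u \in \Lip_b(\bar{\mssd}_\U, \QP)$. Prop.~\ref{t:ClosabilitySecond} yields $u \in \mathcal C_r \subset \dom{\E^{\U, \QP}_r}$ together with the pointwise bound $\cdc^{\U}_r(u) \le \Lip_{\bar{\mssd}_\U}(u)^2$ for every $r>0$. Since $\QP$ is a probability measure, this gives the uniform bound
$$\E^{\U, \QP}_r(u) \le \tfrac{1}{2}\Lip_{\bar{\mssd}_\U}(u)^2,$$
so the monotone limit is finite, proving $u \in \dom{\E^{\U, \QP}}$. Monotone convergence in $\cdc^{\U}_r(u) \uparrow \cdc^{\U}(u)$ from \eqref{d:SF} together with the uniform pointwise bound gives $\cdc^{\U}(u) \le \Lip_{\bar{\mssd}_\U}(u)^2$ $\QP$-a.e. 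For a possibly unbounded $u \in \Lip(\bar{\mssd}_\U, \QP) \cap L^2(\QP)$, I truncate by $u_n := (-n) \vee u \wedge n \in \Lip_b(\bar{\mssd}_\U, \QP)$, note that $\Lip_{\bar{\mssd}_\U}(u_n) \le \Lip_{\bar{\mssd}_\U}(u)$, apply the bounded case, and conclude by $L^2(\QP)$-lower semicontinuity of $\E^{\U, \QP}$ and Fatou applied to $\cdc^{\U}(u_n)$.

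\emph{Main obstacle.} The delicate point is the interplay between pointwise and integrated convergence along the monotone limit: one must check that the pointwise definition of $\cdc^{\U}$ in \eqref{d:SF} really serves as the carré du champ controlling $\E^{\U, \QP}$ on $\Lip_b(\bar{\mssd}_\U, \QP)$. This is handled by the monotonicity of $r \mapsto \cdc^{\U}_r(u)$ from Prop.~\ref{p:mono} combined with monotone convergence under $\QP$, so no new analytic input beyond the truncated-form theory of \S\ref{sec:CI} is needed; the verification is therefore essentially automatic.
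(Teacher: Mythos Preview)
Your proposal is correct and follows essentially the same strategy as the paper: both derive locality and the Rademacher bound from the corresponding properties of the truncated forms $(\E^{\U,\QP}_r,\dom{\E^{\U,\QP}_r})$ and then pass to the monotone limit. The paper argues locality at the level of the square field (using~\eqref{d:SF} and the locality of each $\cdc^\U_r$), whereas you work at the level of the bilinear forms via polarisation; these are equivalent formulations of the same idea. For the Rademacher-type property you are in fact slightly more careful than the paper: Prop.~\ref{t:ClosabilitySecond} only establishes $\Lip_b(\bar\mssd_\U,\QP)\subset\mathcal C_r$, and your truncation argument $u_n=(-n)\vee u\wedge n$ together with lower semicontinuity closes the gap to unbounded $u\in\Lip(\bar\mssd_\U,\QP)\cap L^2(\QP)$, which the paper's proof glosses over. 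One small cosmetic point: in your locality step the restriction to $u\in L^\infty(\QP)$ is unnecessary, since $F,G\in C_c^\infty(\R)$ already forces $F_0\circ u=F_0\circ\bigl((-M)\vee u\wedge M\bigr)$ for $M$ large, so the general case reduces to the bounded one automatically.
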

\begin{proof}
The strong locality follows from~\eqref{d:SF}.
We show the Rademacher-type property. 
Since~$\cdc^\U$ is the limit square field of~$\cdc^{\U}_r$ as in~\eqref{d:SF}, it suffices to show 
\begin{align*} 
\Lip(\U, \bar{\mssd}_\U, \QP)  \subset \mathcal C_r \quad \text{and} \quad \cdc^\U_r(u) \le \Lip_{\bar{\mssd}_\U}(u)^2  \cquad u \in \Lip(\U, \bar{\mssd}_\U, \QP) \quad r>0\comma
\end{align*}
which has been already proven in Prop.~\ref{t:ClosabilitySecond}. 
The proof is complete.
\end{proof}

As an application of Prop.~\ref{p:DF}, we have the quasi-regularity of $\E^{\U, \QP}$ with a smaller domain. 
\begin{cor} \label{c:QRB}
Let $\QP$ be $\sine_\beta$ for $\beta>0$. Then, $\bigl(\E^{\U, \QP}, \Lip_b(\U, \mssd_\U) \cap \mathcal C_b(\U, \tau_\mrmv)\bigr)$ is closable and the closure $(\E^{\U, \QP}, \mathcal F)$ is quasi-regular with respect to the vague topology~$\tau_\mrmv$.
\end{cor}
\begin{proof}
We first note that the algebra~$\Lip_b(\U, \mssd_\U) \cap \mathcal C_b(\U, \tau_\mrmv)$ is not empty due to, e.g.,  (a) Example~\ref{r:LT}, and it is dense in $L^2(\U, \QP)$ by~\cite[Lem.~4.5]{AmbErbSav16} combined with~\cite[Prop.~4.30]{LzDSSuz21}.
The closability follows from the closedness of $(\E^{\U, \QP}, \dom{\E^{\U, \QP}})$ and $ \Lip_b(\U, \mssd_\U) \cap \mathcal C_b(\U, \tau_\mrmv) \subset \dom{\E^{\U, \QP}}$ by Prop.~\ref{p:DF}.  By applying~\cite[Cor.~3.22]{Suz23}, we conclude that the form~$(\E^{\U, \QP}, \mathcal F)$ is quasi-regular.
\end{proof}
\begin{rem}[Quasi-regularity and associated diffusions] \label{r:IDI3}
We do not know whether the form~$(\E^{\U, \QP}, \dom{\E^{\U, \QP}})$ is quasi-regular (w.r.t.~the vague topology $\tau_\mrmv$) except for~$\beta=1,2,4$, mainly because we do not know whether  $\dom{\E^{\U, \QP}}$ has a dense subset consisting of quasi-continuous functions, see the condition (b) of the quasi-regularity in \S\ref{ss:DF}.   However, Cor.~\ref{c:QRB} shows that we can take a smaller domain $\mathcal F \subset \dom{\E^{\U, \QP}}$ in such a way that $(\E^{\U, \QP}, \mathcal F)$ is quasi-regular.  In particular by~\cite[Thm.~3.5 p.103]{MaRoe90},  there exists an associated diffusion process on~$\U$ for {\it every} $\beta>0$. This gives the construction of diffusion processes for the range $0<\beta<1$ that was not covered in~\cite{Tsa16}.  
Whether this diffusion corresponds to \eqref{d:DBMS} is, however,  open, where one needs to care about the collision among particles, which is expected to happen when $0<\beta<1$. 
\end{rem}

\begin{prop}\label{prop: MGS}
The semigroup $\{T^{\U, \QP}_{t}\}_{t \ge 0}$ is the $L^p(\U, \QP)$-strong operator limit of the semigroups $\{T^{\U, \QP}_{r, t}\}_{t \ge 0}$ for $p=1,2$, viz., 
$$\text{{\small $L^p(\mu)$--}}\lim_{r \to \infty} T^{\U, \QP}_{r, t} u= T^{\U, \QP}_t u  \quad u \in  L^p(\U, \QP)\comma \quad t>0 \cquad p=1,2 \fstop$$
\end{prop}
\begin{proof}
The case $p=2$ follows from the monotonicity of~$(\E^{\U, \QP}_r, \dom{\E^{\U, \QP}_r})$  in~Prop.~\ref{p:mono}
and~\cite[S.14, p.373]{ReeSim80}. Although the case $p=1$ is a standard consequence of the case $p=2$ due to the $L^1$-contraction property of $T_{t, r}^{\U, \QP}$ and $T_t^{\U, \QP}$, we give a proof for the sake of clarity. We note that the $L^2$-operators $T^{\U, \QP}_{r, t}$ and $T^{\U, \QP}_{t}$ can be uniquely extended to the~$L^1$-strongly continuous Markovian contraction semigroups, see~\eqref{e:con1}. As $L^1(\U, \QP) \cap L^2(\U, \QP)$ is dense in $L^1(\U, \QP)$, for any $u \in L^1(\U, \QP)$ and $\e>0$, there exists $u_\e \in L^1(\U, \QP) \cap L^2(\U, \QP)$ so that $\|u-u_\e\|_{L^1(\QP)}<\e$ and   
\begin{align*}
&\|T^{\U, \QP}_{r, t} u - T^{\U, \QP}_{t}u\|_{L^1(\QP)} 
\\
&\le \|T^{\U, \QP}_{r, t} u - T^{\U, \QP}_{r, t}u_\e\|_{L^1(\QP)} +\|T^{\U, \QP}_{r, t} u_\e - T^{\U, \QP}_{t}u_\e\|_{L^1(\QP)} +\|T^{\U, \QP}_{t} u_\e - T^{\U, \QP}_{t}u\|_{L^1(\QP)} 
\\
&\le \|u - u_\e\|_{L^1(\QP)} +\|T^{\U, \QP}_{r, t} u_\e - T^{\U, \QP}_{t}u_\e\|_{L^2(\QP)} +\| u_\e - u\|_{L^1(\QP)} 
\\
&\xrightarrow{r\to\infty} \e + 0 +\e \fstop
\end{align*}
As $\e>0$ is arbitrarily small, the proof is completed. 
\end{proof}

\begin{cor} \label{c:LSG}For  $r>0$ and $u \in \dom{\E^{\U, \QP}}$, 
\begin{align} \label{e:LSG0}
T_{r', t}^{\U, \QP}u \xrightarrow{r' \to \infty} T_t^{\U, \QP} u \quad \text{weakly in $\dom{\E^{\U, \QP}_{r}}$} \fstop
\end{align}
In particular, 
\begin{align} \label{e:LSG}
\int_{\U} \cdc^{\U}_r(T^{\U, \QP}_tu)h \diff \QP 
 \le  \liminf_{r' \to \infty} \int_{\U} \cdc^{\U}_{r}(T_{r', t}^{\U, \QP}u)   h \diff \QP  \comma
\end{align}
for every non-negative $h \in \mathcal D(\E^{\U, \QP}_r) \cap L^\infty(\QP)$.
\end{cor}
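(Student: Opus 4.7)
The plan is to combine a uniform $\dom{\E^{\U, \QP}_r}$-bound on the net $\tseq{T^{\U, \QP}_{r', t}u}_{r'\ge r}$ with the strong $L^2(\QP)$-convergence from Prop.~\ref{prop: MGS} to identify the weak limit, and then to deduce \eqref{e:LSG} from the weak lower semicontinuity of a continuous convex quadratic functional on $\dom{\E^{\U, \QP}_r}$.

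First, I would establish uniform boundedness in $\dom{\E^{\U, \QP}_r}$. Because $u\in\dom{\E^{\U, \QP}}\subset\cap_{s>0}\dom{\E^{\U, \QP}_s}$ by Def.~\ref{d:DFF}, the monotonicity of Prop.~\ref{p:mono} gives $\E^{\U, \QP}_{r'}(u)\le \E^{\U, \QP}(u)<\infty$ for every $r'>0$. The standard semigroup--form contraction (e.g.~\cite[Lem.~1.3.3]{FukOshTak11}) yields $\E^{\U, \QP}_{r'}(T_{r', t}^{\U, \QP}u)\le \E^{\U, \QP}_{r'}(u)$, and applying Prop.~\ref{p:mono} once more (noting $T_{r', t}^{\U, \QP}u\in\dom{\E^{\U, \QP}_{r'}}\subset\dom{\E^{\U, \QP}_{r}}$ for $r'\ge r$) produces
\[
\E^{\U, \QP}_r\bigl(T_{r', t}^{\U, \QP}u\bigr)\le \E^{\U, \QP}_{r'}\bigl(T_{r', t}^{\U, \QP}u\bigr)\le \E^{\U, \QP}(u)<\infty\comma \quad r'\ge r\fstop
\]
Together with the $L^2$-contraction $\|T_{r', t}^{\U, \QP}u\|_{L^2(\QP)}\le \|u\|_{L^2(\QP)}$, this shows that the family is bounded in the Hilbert space $\dom{\E^{\U, \QP}_r}$ endowed with the inner product $\E^{\U, \QP}_{r, 1}$. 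By Banach--Alaoglu, every sequence $r'_n\to\infty$ admits a $\dom{\E^{\U, \QP}_r}$-weakly convergent subsequence whose limit $v$ lies in $\dom{\E^{\U, \QP}_r}$; since weak $\dom{\E^{\U, \QP}_r}$-convergence implies weak $L^2(\QP)$-convergence, the strong $L^2(\QP)$-convergence supplied by Prop.~\ref{prop: MGS} forces $v=T^{\U, \QP}_t u$. The standard subsequence principle then promotes this to weak convergence of the entire net, giving \eqref{e:LSG0} (and, as a byproduct, $T^{\U, \QP}_t u\in\dom{\E^{\U, \QP}_r}$).

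For \eqref{e:LSG}, I would fix non-negative $h\in\dom{\E^{\U, \QP}_r}\cap L^\infty(\QP)$ and introduce
\[
Q_h(v):=\int_{\U}\cdc^{\U}_r(v)\, h\diff\QP\comma \quad v\in\dom{\E^{\U, \QP}_r}\fstop
\]
Polarising the square field shows that $Q_h$ is a non-negative symmetric quadratic form on $\dom{\E^{\U, \QP}_r}$, and the estimate
\[
Q_h(v)\le \|h\|_{L^\infty(\QP)}\int_{\U}\cdc^{\U}_r(v)\diff\QP = 2\|h\|_{L^\infty(\QP)}\,\E^{\U, \QP}_r(v)
\]
shows it is continuous there. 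Being a continuous convex functional on a Hilbert space, $Q_h$ is weakly lower semicontinuous, and applying this to the weak convergence \eqref{e:LSG0} yields \eqref{e:LSG}. The only delicate point in the argument is the identification of the weak limit: the plan relies entirely on the fact that weak convergence in $\dom{\E^{\U, \QP}_r}$ is strictly stronger than weak convergence in $L^2(\QP)$, so that the strong $L^2$-convergence from Prop.~\ref{prop: MGS} uniquely pins down any weak subsequential cluster point.
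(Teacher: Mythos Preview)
Your proof is correct and follows essentially the same approach as the paper: both establish the uniform $\dom{\E^{\U, \QP}_r}$-bound via the chain $\E^{\U, \QP}_r(T_{r', t}^{\U, \QP}u)\le \E^{\U, \QP}_{r'}(T_{r', t}^{\U, \QP}u)\le \E^{\U, \QP}_{r'}(u)\le \E^{\U, \QP}(u)$ and then identify the weak limit through the strong $L^2$-convergence of Prop.~\ref{prop: MGS}. For~\eqref{e:LSG} the paper simply cites \cite[Lem.~2.4]{HinRam03}, whereas you spell out the underlying weak lower semicontinuity argument for the weighted quadratic functional $Q_h$; this is exactly the content of that lemma, so the two proofs coincide.
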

\begin{proof}
First of all, \eqref{e:LSG0} is well-posed as $T_{r', t}^{\U, \QP}u, T_t^{\U, \QP} u \in \dom{\E^{\U, \QP}_{r}}$ thanks to the inclusion $\dom{\E^{\U, \QP}}\subset \cap_{r>0} \dom{\E^{\U, \QP}_r}$ and the monotonicity $\dom{\E^{\U, \QP}_{r'}} \subset  \dom{\E^{\U, \QP}_{r}}$ for $r \le r'$. 
By the monotonicity in~Prop.~\ref{p:mono} and the contraction property~\eqref{e:con} of the semigroup~$T_{r', t}^{\U, \QP}$  in terms of the Dirichlet form~ $\E^{\U, \QP}_{r'}$, it holds that for $r \le r'$
\begin{align*}
\E^{\U, \QP}_{r}(T_{r', t}^{\U, \QP}u) \le \E^{\U, \QP}_{r'}(T_{r', t}^{\U, \QP}u) \le \E^{\U, \QP}_{r'}(u) \le  \E^{\U, \QP}(u)<+\infty \fstop
\end{align*}
Combining with the fact that the semigroup $T_{r', t}^{\U, \QP}$ also contracts the $L^2(\U, \QP)$-norm (see~\eqref{e:con}), we conclude that $\{T_{r', t}^{\U, \QP}u\}_{r' \ge r}$ is bounded in $\dom{\E^{\U, \QP}_{r}}$. Thanks to {Prop.}~\ref{prop: MGS},  $\{T_{r', t}^{\U, \QP}u\}_{r' \ge r}$ converges to $T_{t}^{\U, \QP}u$ weakly in $\dom{\E^{\U, \QP}_{r}}$. 
The latter statement is a consequence of the first statement, see, e.g., \cite[Lem.~2.4]{HinRam03}. 
\end{proof}

\subsection{Bakry--\'Emery Curvature bound for the infinite-volume form}
In this subsection, we prove the Bakry--\'Emery curvature bound for~$(\E^{\U, \QP}, \dom{\E^{\U, \QP}})$.
\begin{thm} \label{t: main}
Let $\beta>0$ and $\mu=\sine_\beta$. The Bakry--\'Emery gradient estimate~$\BE(0,\infty)$ holds:
\begin{align} \label{m:BE}
\cdc^{\U}\bigl(T_t^{\U, \mu} u\bigr) \le T_t^{\U, \mu} \bigl(\cdc^{\U}(u)\bigr) \cquad u \in  \dom{\E^{\U, \mu}} \quad  t \ge 0\fstop \tag{$\BE(0,\infty)$}
\end{align}
Furthermore, the curvature lower bound $K=0$ is optimal when $\beta=2$.
\end{thm}

We first prove $\BE(0,\infty)$ for the truncated form $(\E^{\U, \QP}_{r}, \mathcal D(\E^{\U, \QP}_r))$.
\begin{lem}\label{l:BER}
The form~$(\E^{\U, \QP}_{r}, \mathcal D(\E^{\U, \QP}_r))$ satisfies $\BE(0,\infty)$ for every $r>0$$:$
\begin{align} \label{m:BE}
\cdc_r^{\U}\bigl(T_{t, r}^{\U, \mu} u\bigr) \le T_{t, r}^{\U, \mu} \bigl(\cdc_r^{\U}(u)\bigr) \cquad u \in  \dom{\E_r^{\U, \mu}} \quad  t \ge 0 \fstop 
\end{align}
\end{lem}
\begin{proof}
Take $u \in \mathcal D(\E^{\U, \QP}_r)$. By Prop.~\ref{p:BE2} combined with~\eqref{d:GSF} and~\eqref{e:FFV}, there exists $\Xi_r^1 \subset \U$ with $\QP(\Xi_r^1)=1$ so that, for every~$\eta \in \Xi_r^1$, there exists a measurable set $\Omega_{r}^{1, \eta} \subset \U(B_r)$ with  $\QP_{r}^{\eta}(\Omega_{r}^{1, \eta} )=1$ satisfying the following Bakry--\'Emery gradient estimate:
\begin{align} \label{e:BE2}
\Gamma^{\U(B_r)}(T_t^{\U(B_r), \QP_r^\eta}u_{r}^\eta)(\gamma) \le T_t^{\U(B_r), \QP_r^\eta}\bigl(\Gamma^{\U(B_r)}(u_{r}^\eta)\bigr)(\gamma) \cquad \gamma \in \Omega_{r}^{1, \eta}\fstop
\end{align}
Here, we used the fact that $\RCD(0,\infty)$ implies $\BE(0,\infty)$, see \S\ref{ss:RCD} for a characterisation of $\RCD(0,\infty)$. 
By~Prop.~\ref{t:ClosabilitySecond}, there exists $\Xi_r^2 \subset \U$ with $\QP(\Xi_r^2)=1$ so that, for every~$\eta \in \Xi_r^2$, there exists a measurable set $\Omega_{r}^{2, \eta} \subset \U(B_r)$ with  $\QP_{r}^{\eta}(\Omega_{r}^{2, \eta} )=1$ satisfying 
\begin{align} \label{e:BE3}
\cdc^{\U}_r(T_{r, t}^{\U, \QP} u)(\gamma+\eta_{B_r^c})&=\Gamma^{\U(B_r)}\Bigl( \bigl(T_{r, t}^{\U, \QP} u\bigr)_r^\eta\Bigr)(\gamma)  \cquad \gamma \in \Omega_{r}^{2, \eta}  ;
\\
\cdc^{\U}_r(u)(\gamma+\eta_{B_r^c})&=\Gamma^{\U(B_r)}( u_r^\eta)(\gamma) \fstop  \notag
\end{align}
By~Cor.~\ref{prop: 1}, there exists $\Lambda^3_r \subset \U$ with $\QP(\Lambda^3_r)=1$ so that
\begin{align} \label{e:BE4}
T_{r, t}^{\U, \QP} u(\gamma)=T_{t}^{\U(B_r), \QP_r^\gamma} u_r^\gamma(\gamma)  \cquad \gamma \in \Lambda^3_r\fstop
\end{align}
Recalling the notation~$\U_r^\eta:=\{\gamma \in \U: \gamma_{B_r^c}=\eta_{B_r^c}\}$, we can write $\Lambda_r^3$ as the union along the fibre $\U_r^\eta$: 
$$\Lambda^3_r={\bigcup_{\eta_{B_r^c} \in \Xi_r^3} \Lambda^3_r \cap \U_{r}^{\eta} }=\bigcup_{\eta \in \Xi_r^3}\pr_{B_r}^{-1}(\Omega_{r}^{3, \eta})\cap \U_{r}^{\eta} \comma$$ where $\Xi_r^3={\rm pr}_{B_r^c}(\Lambda^3_r)$, $\Omega_{r}^{3, \eta}=(\Lambda^3_r)_r^\eta:=\{\gamma \in \U(B_r): \gamma+\eta_{B_r^c} \in \Lambda^3_r\}$, and $\pr_{B_r}$ and $\pr_{B_r^c}$ are the projections defined in~\eqref{eq:ProjUpsilon}.
By the disintegration formula~\eqref{p:ConditionalIntegration2}, $\QP(\Xi_r^3)=1$ and $\QP_r^\eta(\Omega_{r}^{3, \eta})=1$ for every~$\eta \in \Xi_r^3$. 

Let  $\Xi_r:=\Xi_{r}^{1} \cap \Xi_{r}^{2}  \cap\Xi_{r}^{3}$ and $\Omega^\eta_r:=\Omega_{r}^{1, \eta} \cap \Omega_{r}^{2, \eta} \cap \Omega_{r}^{3, \eta}$ for $\eta \in \Xi_r$.
Set 
$$\Kappa_r:=\bigcup_{\eta \in \Xi_r} \pr_{B_r}^{-1}(\Omega_{r}^{\eta})\cap \U_{r}^{\eta} \fstop$$
By construction, $\QP(\Xi_r)=1$ and $\QP_r^\eta(\Omega^\eta_r)=1$ for every~$\eta \in \Xi_r$. By \eqref{e:BE2}, \eqref{e:BE3} and \eqref{e:BE4}, the following inequalities hold for every $\gamma \in \Kappa_r$:
\begin{align} \label{e:BER1-1}
\Gamma^\U_r(T_{r, t}^{\U, \QP} u)(\gamma) 
&= \Gamma^\U_r(T_{r, t}^{\U, \QP} u)(\gamma_{B_r}+\gamma_{B_r^c}) 
\\
&= \Gamma^{\U(B_r)}((T^{\U, \QP}_{r, t} u)_{r}^{\gamma})(\gamma_{B_r})  \notag
\\
&\le T_{t}^{\U(B_r), \QP_r^\gamma}\Gamma^{\U(B_r)}(u_{r}^{\gamma})(\gamma_{B_r})  \notag
\\
&= T_{t}^{\U(B_r), \QP_r^\gamma}\bigl(\Gamma^\U_r(u)_{r}^{\gamma}\bigr)(\gamma_{B_r})  \notag
\\
&= T_{r, t}^{\U, \QP} \Gamma^\U_r(u)(\gamma) \fstop \notag
\end{align}
Let $\Theta:=\{\gamma \in \U: \Gamma^\U_r(T_{r, t}^{\U, \QP} u)(\gamma) \le T_{r, t}^{\U, \QP} \Gamma^\U_r(u)(\gamma)\}$.  Then $\Theta$ is $\QP$-measurable since it is a sub-level set of a measurable function. Thanks to~\eqref{e:BER1-1}, $\Kappa_r  \subset \Theta$. By applying Lem.~\ref{l:sp}, we obtain $\QP(\Theta)=1$, which concludes $\BE(0,\infty)$ for the truncated form $(\E^{\U, \QP}_{r}, \mathcal D(\E^{\U, \QP}_r))$ for every $r>0$.
\end{proof}

We now prove $\BE(0,\infty)$ for $(\E^{\U, \QP}, \dom{\E^{\U, \QP}})$.
\begin{proof}[Proof of the first statement of Thm.~\ref{t: main}]
We prove $\BE(0,\infty)$ for $(\E^{\U, \QP}, \dom{\E^{\U, \QP}})$. It suffices to prove 
\begin{align} \label{e:BEG}
\int_{\U}\cdc^{\U}(T^{\U, \QP}_tu) h \diff \QP \le \int_{\U}  T^{\U, \QP}_t\cdc^{\U}(u) h \diff \QP  \comma
\end{align}
for all non-negative $h \in \mathcal D(\E^{\U, \QP}) \cap L^\infty(\QP)$.
 Indeed, thanks to the Rademacher-type property in Prop.~\ref{t:ClosabilitySecond},
we have 
$$\Lip_{b, +}(\U, \mssd_\U, \QP)\subset  \mathcal D(\E^{\U, \QP}) \cap L^\infty_+(\U, \QP).$$
As $\Lip_{b,+}(\U, \mssd_\U, \QP)\cap C(\tau_{\mrmv})$ is point separating (see \cite[(a) in Rem.~5.13]{LzDSSuz21}), it is measure-determining, see, e.g., \cite[p.113, (a) in Thm.~4.5 in Chap.~3]{EthKur86}. Thus, the inequality \eqref{e:BEG} implies $\cdc^{\U}(T^{\U, \QP}_tu) \le T^{\U, \QP}_t\cdc^{\U}(u)$ $\QP$-a.e.. 

We now prove~\eqref{e:BEG}. Let $u \in \dom{\E^{\U, \QP}}$ and recall the inclusion~$\dom{\E^{\U, \QP}} \subset \cap_{r>0} \dom{\E^{\U, \QP}_r}$. The following inequalities hold: 
\begin{align*}
\int_{\U}\cdc^{\U}(T^{\U, \QP}_tu) h \diff \QP
&= \int_{\U} \lim_{r \to \infty} \cdc^{\U}_r(T^{\U, \QP}_tu) h \diff \QP 
\\
&=  \lim_{r \to \infty} \int_{\U} \cdc^{\U}_r(T^{\U, \QP}_tu) h \diff \QP 
\\
& \le   \limsup_{r \to \infty} \liminf_{r' \to \infty} \int_{\U} \cdc^{\U}_{r}(T_{r', t}^{\U, \QP}u)   h \diff \QP 
\\
& \le  \limsup_{r' \to \infty} \int_{\U} \cdc^{\U}_{r'}(T_{r', t}^{\U, \QP}u)   h \diff \QP 
\\
& \le  \limsup_{r' \to \infty} \int_{\U} T_{r', t}^{\U, \QP} \cdc^{\U}_{r'}(u)   h \diff \QP 
\\
&= \int_{\U}  T^{\U, \QP}_t\cdc^{\U}(u) h \diff \QP \comma
\end{align*}
where in the first and the fourth lines,  we used the definition~$\cdc^{\U}(u)=\lim_{r \to \infty}\cdc^\U_r(u)$ and the monotonicity $\cdc^{\U}_{r} \le \cdc^{\U}_{r'}$ for $r \le r'$; in the third line, \eqref{e:LSG} in~Cor.~\ref{c:LSG} was used; in the fifth line,  $\BE(0,\infty)$ in Lem.~\ref{l:BER} was used; 
the last equality  followed by the $L^1$-contraction property~\eqref{e:con1} of $\|T_{r', t}^{\U, \QP}u\|_{L^1(\QP)}\le \|u\|_{L^1(\QP)}$, the monotone convergence $\cdc_r^\U(u) \nearrow \cdc^\U(u)$ as $r \to \infty$, and the $L^1$-strong operator convergence $T_{r', t}^{\U, \QP} \to T_{ t}^{\U, \QP}$ in~Prop.~\ref{prop: MGS}:
\begin{align*}
&\bigl\|T_{r', t}^{\U, \QP} \cdc^{\U}_{r'}(u) - T^{\U, \QP}_t \cdc^{\U}(u) \bigr\|_{L^1(\QP)} 
\\
&= \bigl\|T_{r', t}^{\U, \QP} \cdc^{\U}_{r'}(u) - T^{\U, \QP}_{r', t} \cdc^{\U}(u) \bigr\|_{L^1(\QP)} + \bigl\|T^{\U, \QP}_{r', t} \cdc^{\U}(u) - T^{\U, \QP}_t \cdc^{\U}(u)\bigr\|_{L^1(\QP)} 
\\
& \le \bigl\| \cdc^{\U}_{r'}(u) -  \cdc^{\U}(u)\bigr\|_{L^1(\QP)} + \bigl\|T^{\U, \QP}_{r', t} \cdc^{\U}(u) - T^{\U, \QP}_t \Gamma^{\U}(u)\bigr\|_{L^1(\QP)} 
\xrightarrow{r' \to \infty} 0 \fstop
\end{align*}
We have verified \eqref{e:BEG}, which completes the proof of the first statement in Thm.~\ref{t: main}. 
\end{proof}

\begin{proof}[Proof of the optimality $K=0$ in Thm.~\ref{t: main}]
By~\cite[Cor.~I]{Suz23} (see also \cite{OsaOsa23}), the form $(\E^{\U, \QP}, \dom{\E^{\U, \QP}})$  is irreducible, i.e.,  $\E^{\U, \QP}(u)=0$ with $u \in \dom{\E^{\U, \QP}}$ implies $u=\text{const.}$ $\QP$-a.e.. By \cite[Theorem]{Suz24}, the form $(\E^{\U, \QP},  \dom{\E^{\U, \QP}})$ does not have a spectral gap.  Recall the fact that if a Dirichlet form is irreducible and satisfies $\BE(K,\infty)$ with $K>0$, then it has a spectral gap, see e.g., \cite[Dfn.~3.1.11, Prop.~4.8.1]{BakGenLed14} (where the terminology, {\it the ergodicity}, is used for the irreducibility). This fact implies that the curvature lower bound $K$ cannot be positive, which concludes the optimality of $K=0$. 
\end{proof}

\subsection{Integral Bochner, local Poicar\'e and local log-Sobolev inequalities} \label{sec:AP1}
As an application of $\BE(0,\infty)$ in Thm.~\ref{t: main}, we show several functional inequalities.
We define {\it the integral $\mathbf \Gamma_2$-operator} as follows:
\begin{align} \label{d:IG}
&\mathbf \Gamma_2^{\U, \QP}(u, \phi):=\int_{\U}\biggl( \frac{1}{2}\cdc^{\U}(u)A^{\U, \QP} \phi - \cdc^{\U}(u, A^{\U, \QP}u) \phi\biggr) \diff \QP \comma
\\
&\dom{\mathbf \Gamma_2^{\U, \QP}}:=\bigl\{(u, \phi)\in \dom{A^{\U, \QP}}^{\times 2}: A^{\U, \QP}u \in \dom{\E^{\U, \QP}},\ \phi, A^{\U, \QP}u  \in L^\infty(\U, \QP) \bigr\} \comma \notag
\end{align}
where $A^{\U, \QP}$ denotes the $L^2(\U, \QP)$-infinitesimal generator associated with~$(\E^{\U, \QP}, \dom{\E^{\U, \QP}})$.
\begin{cor}\label{t:LPS}
Let $\mu=\sine_\beta$ with $\beta>0$.  The following  hold:
\begin{enumerate}[{\rm (a)}]
\item$(${\bf lntegral Bochner inequality}$)$ for every $(u, \phi) \in \dom{\mathbf \cdc^{\U, \QP}_2}$
\begin{align*}
\mathbf \cdc^{\U, \QP}_2(u, \phi) \ge 0 \ ;
\end{align*}
\item $(${\bf local Poincar\'e inequality}$)$  for $u \in \dom{\E^{\U, \QP}}$ and $t\ge 0$,
\begin{align*}
&T^{\U, \QP}_tu^2- (T^{\U, \QP}_tu)^2 \le 2tT^{\U, \QP}_t\cdc^{\U}(u)  \comma
\\
&T^{\U, \QP}_tu^2- (T^{\U, \QP}_tu)^2 \ge 2t\cdc^{\U} (T^{\U, \QP}_tu) \fstop
\end{align*}
\end{enumerate}
\end{cor}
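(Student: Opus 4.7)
Both statements are standard consequences of $\BE(0,\infty)$ in the Bakry--\'Emery formalism, which is available for $(\E^{\U,\QP}, \dom{\E^{\U,\QP}})$ thanks to Thm.~\ref{t: main}. The locality of the form, established in Prop.~\ref{p:DF}, gives the diffusion (Leibniz) identity
\begin{equation*}
A^{\U, \QP}(v^2)=2v\,A^{\U,\QP}v+2\cdc^{\U}(v)
\end{equation*}
for sufficiently regular $v$; this is the only structural input beyond $\BE(0,\infty)$ that will be used. In both parts I would first work with a dense subalgebra of ``nice'' functions (e.g.~bounded elements of $\dom{A^{\U,\QP}}$, or bounded $\bar\mssd_\U$-Lipschitz functions via the Rademacher-type inclusion of Prop.~\ref{p:DF}), and then remove the regularity assumption by a standard density/approximation argument in $\dom{\E^{\U,\QP}}$.

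For part (a), fix $(u,\phi) \in \dom{\mathbf \cdc_2^{\U,\QP}}$ and consider
\begin{equation*}
H(t):=\int_{\U}\phi\bigl(T_t^{\U,\QP}\cdc^{\U}(u)-\cdc^{\U}(T_t^{\U,\QP}u)\bigr)\diff\QP\,.
\end{equation*}
By $\BE(0,\infty)$ we have $H(t)\ge 0$ for $t\ge 0$, while $H(0)=0$; hence $H$ admits a nonnegative right-derivative at $0$. Differentiating under the integral (the domain assumptions on $(u,\phi)$ are designed so that this is legal), using self-adjointness of $A^{\U,\QP}$ to transfer the generator from $\cdc^{\U}(u)$ onto $\phi$, and applying the chain rule $\tfrac{d}{dt}\cdc^{\U}(T_t^{\U,\QP}u)|_{t=0}=2\cdc^{\U}(u, A^{\U,\QP}u)$, the right-derivative of $H$ at $0$ equals $2\,\mathbf \cdc_2^{\U,\QP}(u,\phi)$, yielding the integral Bochner inequality.

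For part (b), the argument is the classical Bakry--Ledoux semigroup interpolation. For $u\in\dom{\E^{\U,\QP}}\cap L^\infty(\QP)$ and $t>0$, set
\begin{equation*}
\Phi(s):=T_s^{\U,\QP}\bigl((T_{t-s}^{\U,\QP}u)^2\bigr)\,,\qquad s\in[0,t]\,,
\end{equation*}
so that $\Phi(0)=(T_t^{\U,\QP}u)^2$ and $\Phi(t)=T_t^{\U,\QP}u^2$. Using the diffusion identity above one computes
\begin{equation*}
\Phi'(s)=T_s^{\U,\QP}\Bigl(A^{\U,\QP}(T_{t-s}^{\U,\QP}u)^2-2\,T_{t-s}^{\U,\QP}u\cdot A^{\U,\QP}T_{t-s}^{\U,\QP}u\Bigr)=2\,T_s^{\U,\QP}\cdc^{\U}(T_{t-s}^{\U,\QP}u)\,.
\end{equation*}
Integrating from $0$ to $t$ gives $T_t^{\U,\QP}u^2-(T_t^{\U,\QP}u)^2=2\int_0^t T_s^{\U,\QP}\cdc^{\U}(T_{t-s}^{\U,\QP}u)\diff s$. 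Applying $\BE(0,\infty)$ in the form $\cdc^{\U}(T_{t-s}^{\U,\QP}u)\le T_{t-s}^{\U,\QP}\cdc^{\U}(u)$ and using the semigroup identity $T_s^{\U,\QP}T_{t-s}^{\U,\QP}=T_t^{\U,\QP}$ yields the upper bound $2tT_t^{\U,\QP}\cdc^{\U}(u)$; applying $\BE(0,\infty)$ in the dual form $\cdc^{\U}(T_t^{\U,\QP}u)=\cdc^{\U}(T_s^{\U,\QP}T_{t-s}^{\U,\QP}u)\le T_s^{\U,\QP}\cdc^{\U}(T_{t-s}^{\U,\QP}u)$ yields the lower bound $2t\,\cdc^{\U}(T_t^{\U,\QP}u)$. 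Extending both inequalities from regular $u$ to arbitrary $u\in\dom{\E^{\U,\QP}}$ by density and $L^2$-continuity concludes the proof.

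The main obstacle I anticipate is not conceptual but book-keeping: verifying the regularity needed to differentiate $H$ and $\Phi$, to apply the diffusion identity, and to pass the final inequalities through an approximation in $\dom{\E^{\U,\QP}}$, on an extended metric measure structure where the semigroup has been constructed only via the monotone-limit procedure of Dfn.~\ref{d:DFF}. These issues are handled by invoking the general Bakry--\'Emery machinery for local Dirichlet forms together with the $L^1$- and $L^2$-convergence of $T_{r,t}^{\U,\QP}$ to $T_t^{\U,\QP}$ from Prop.~\ref{prop: MGS}.
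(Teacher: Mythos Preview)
Your proposal is correct and follows exactly the route the paper takes: the paper's proof is simply to invoke Thm.~\ref{t: main} and then cite \cite[Cor.~2.3]{AmbGigSav15} for (a) and \cite[Thm.~4.7.2]{BakGenLed14} for (b), and what you have written is precisely the standard semigroup-interpolation argument contained in those references. The regularity book-keeping you flag is exactly what those cited works handle in the abstract local-Dirichlet-form setting, so there is no gap.
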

\begin{proof}
The statement~(a) follows from~$\BE(0,\infty)$ proven in Thm.~\ref{t: main} and \cite[Cor.~2.3]{AmbGigSav15}. The statement (b) is a consequence of $\BE(0,\infty)$, see e.g.,~\cite[Thm.~4.7.2]{BakGenLed14}. 
\end{proof}

\begin{rem}[Local spectral gap inequality]\label{r:LSG}
Suppose that the form $(\E^{\U, \QP}, \dom{\E^{\U, \QP}})$ is quasi-regular (e.g., it is known for $\beta=1,2,4$ as discussed in Rem.~\ref{r:IDI2}). Then, there exists a diffusion process~$\{(X_t, \mathbb P_\gamma): t\ge 0, \ \gamma \in \U\}$ so that $T_t^{\U, \QP}u(\gamma) = \mathbb E_\gamma[u(X_t)]$ for quasi every~$\gamma$, where $\mathbb E_\gamma$ denotes the expectation under the probability measure~$\mathbb P_\gamma$. See \cite[Thm.~3.5 p.103]{MaRoe90}. In particular, there exists a transition probability kernel $P^{\U, \QP}_t(\gamma, \diff\eta)$ satisfying
\begin{align} \label{e:FE}
T_t^{\U, \QP}u(\gamma) = \int_{\U} u(\eta) P^{\U, \QP}_t(\gamma, \diff\eta) \qquad \text{for quasi every~$\gamma$} \fstop
\end{align}
The local Poincar\'e inequality (b) in Cor.~\ref{t:LPS} is the spectral gap inequality with the reference measure~$P^{\U, \QP}_t(\gamma, \diff\eta)$:
\begin{align}\label{e:LSGa}
\int_{\U} \Bigl|u(\eta)-{\int_{\U}u(\eta) P^{\U, \QP}_t(\gamma, \diff\eta)}\Bigr|^2P^{\U, \QP}_t(\gamma, \diff\eta) \le  2t\int_{\U} \cdc^{\U, \QP}(u)(\eta) P^{\U, \QP}_t(\gamma, \diff\eta)  \fstop 
\end{align}
The local Poincar\'e inequality is also called {\it reverse Poincar\'e inequality}. The name ``local'' comes from that the measure~$P_t^{\U, \QP}(\gamma, \diff \eta)$ is typically (e.g., heat kernel measures in complete Riemannian manifolds) {\it concentrated around $\gamma$} when $t$ is small (see \cite[\S 4.7 in p.~206]{BakGenLed14}). 
\end{rem}

The following corollary provides a  tail estimate of the measure~$P_t^{\U, \QP}(\gamma, \diff \eta)$, which decays sufficiently fast at the tail to make every (not necessarily bounded) $1$-Lipschitz function exponentially integrable. 
\begin{cor}[Exponential integrability of $1$-Lipschitz functions] \label{c:TES}
Let $\mu=\sine_\beta$ with $\beta>0$ and suppose that the form $(\E^{\U, \QP}, \dom{\E^{\U, \QP}})$ is quasi-regular.
 If $u$ is a $\QP$-measurable $\bar{\mssd}_\U$-Lipschitz function with $\Lip_{\bar{\mssd}_\U}(u) \le 1$ and $|u(\gamma)|<+\infty$ $\QP$-a.e.~$\gamma$, then for every $s<\sqrt{2/t}$
$$\int_{\U}  e^{s u(\eta)} P^{\U, \QP}_t(\gamma, \diff\eta) <+\infty \cquad \text{$\QP$-a.e.~$\gamma$}\fstop$$
\end{cor}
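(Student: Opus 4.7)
The plan is a bootstrapping argument of Aida--Masuda--Shigekawa type based on the local Poincar\'e inequality of Corollary~\ref{t:LPS} combined with the Rademacher bound $\cdc^\U(u)\le\Lip_{\bar\mssd_\U}(u)^2\le 1$ from Proposition~\ref{p:DF}. By symmetry ($u\leftrightarrow -u$) it suffices to treat $s\in[0,\sqrt{2/t})$. First I truncate to $u_N:=(-N)\vee u\wedge N$, which by Proposition~\ref{p:DF} lies in $\dom{\E^{\U,\QP}}$ with $\cdc^\U(u_N)\le 1$. The bounded function $v:=e^{s u_N/2}$ then belongs to $\dom{\E^{\U,\QP}}$ by the usual composition rule (the map $x\mapsto e^{sx/2}$ is Lipschitz on $[-N,N]$), and the chain rule for the square field gives $\cdc^\U(v)=(s^2/4)v^2\cdc^\U(u_N)\le(s^2/4)v^2$. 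Feeding this into the first inequality of Corollary~\ref{t:LPS} yields, for $\QP$-a.e.\ $\gamma$,
$$\bigl(1-\tfrac{ts^2}{2}\bigr)\, T^{\U,\QP}_t e^{s u_N}(\gamma)\le \bigl(T^{\U,\QP}_t e^{s u_N/2}(\gamma)\bigr)^2\fstop$$

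Setting $\alpha:=ts^2/2<1$ (which is exactly the range $|s|<\sqrt{2/t}$), I iterate the previous display with $s$ replaced by $s/2^k$ for $k=0,1,\dots,n-1$, noting $t(s/2^k)^2/2=\alpha/4^k$, to get
$$T^{\U,\QP}_t e^{s u_N}(\gamma)\le \Biggl(\prod_{k=0}^{n-1}\bigl(1-\alpha/4^k\bigr)^{-2^k}\Biggr)\Bigl(T^{\U,\QP}_t e^{s u_N/2^n}(\gamma)\Bigr)^{2^n}\fstop$$
As $n\to\infty$ the product converges to a finite positive constant $C_{s,t}$, since $\sum_k 2^k\bigl[-\log(1-\alpha/4^k)\bigr]$ is dominated by a constant multiple of $\sum_k \alpha/2^k<\infty$. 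For the second factor, the boundedness of $u_N$ legitimates the Taylor expansion
$$T^{\U,\QP}_t e^{s u_N/2^n}(\gamma)=1+\tfrac{s}{2^n}T^{\U,\QP}_t u_N(\gamma)+O\bigl(s^2 N^2/4^n\bigr)\fstop$$
Taking logarithms, multiplying by $2^n$ and exponentiating gives $\bigl(T^{\U,\QP}_t e^{s u_N/2^n}(\gamma)\bigr)^{2^n}\to e^{s T^{\U,\QP}_t u_N(\gamma)}$, so
$$T^{\U,\QP}_t e^{s u_N}(\gamma)\le C_{s,t}\, e^{s T^{\U,\QP}_t u_N(\gamma)}\fstop$$

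It remains to send $N\to\infty$. By Theorem~\ref{t:3} the semigroup maps $\Lip(\bar\mssd_\U,\QP)$ into itself with Lipschitz constant at most $1$, so $T^{\U,\QP}_t u(\gamma)$ is finite and $u\in L^1(P_t^{\U,\QP}(\gamma,\emparg))$ for $\QP$-a.e.\ $\gamma$. Dominated convergence (with $|u_N|\le |u|$) yields $T^{\U,\QP}_t u_N(\gamma)\to T^{\U,\QP}_t u(\gamma)$, and Fatou applied to $e^{s u_N}\ge 0$ closes the argument:
$$T^{\U,\QP}_t e^{s u}(\gamma)\le \liminf_N T^{\U,\QP}_t e^{s u_N}(\gamma)\le C_{s,t}\, e^{s T^{\U,\QP}_t u(\gamma)}<\infty\fstop$$
The most delicate point is this final passage $N\to\infty$: the truncated estimate only bounds $T^{\U,\QP}_t e^{s u_N}$ through $T^{\U,\QP}_t u_N$, and without the Lipschitz--Feller regularization of Theorem~\ref{t:3} there would be no way to keep the right-hand side under control as $N$ grows. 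The threshold $|s|<\sqrt{2/t}$ emerges naturally from requiring $\alpha<1$ at the initial step of the iteration.
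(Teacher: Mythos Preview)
Your proposal is correct and takes essentially the same approach as the paper: the paper's proof is simply a citation to \cite[Prop.~4.4.2]{BakGenLed14} (the Aida--Masuda--Shigekawa iteration based on the local Poincar\'e inequality together with the Rademacher bound $\cdc^\U(u)\le 1$), which is precisely the argument you have spelled out in detail. Your explicit treatment of the $N\to\infty$ passage via Theorem~\ref{t:3} addresses a point that the paper's citation-only proof leaves implicit.
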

\begin{proof}
By the Rademacher-type property~\eqref{p:Rad} and the local Poincare inequality~\eqref{e:LSGa}, we can apply~\cite[Prop.~4.4.2]{BakGenLed14} with the reference measure~$P_t^{\U, \QP}(\gamma, \diff \eta)$.  
\end{proof}

The Bakry--\'Emery gradient estimate can be improved to the $L^1$-gradient estimate under the quasi-regularity.
\begin{cor}[$p$-Bakry-\'Emery estimate] \label{p:PBE}
Let $\mu=\sine_\beta$ with $\beta>0$ and suppose that the form $(\E^{\U, \QP}, \dom{\E^{\U, \QP}})$ is quasi-regular. 
Then,  the form $(\E^{\U, \QP}, \dom{\E^{\U, \QP}})$ satisfies $\BE_p(0,\infty)$ for every $1 \le p <\infty$:
 $$\cdc^{\U}(T_t^{\U, \QP}u)^{\frac{p}{2}} \le T_t^{\U, \QP}\bigl(\cdc^{\U}(u)^{\frac{p}{2}}\bigr) \cquad u \in \dom{\E^{\U, \QP}} \quad t \ge 0\fstop$$
 \end{cor}
 \begin{proof}
 The case of $p=2$ is proven in Thm.~\ref{t: main}. As $T_t^{\U, \QP} $can be extended to an~$L^p$-contraction semigroup by~\eqref{e:con1}, the RHS of the displayed formula in the statement is well-posed. The case of $p=1$ follows from the case of $p=2$ combined with Savar\'e's self-improvement result~\cite[Cor.~3.5]{Sav14}.
The case of $p >1$ follows by the case of $p=1$ and the Jensen inequality with the integral expression~\eqref{e:FE}
 $$\Bigl(T_t^{\U, \QP}\bigl(\cdc^{\U}(u)^{\frac{1}{2}}\bigr)\Bigr)^{p} \le T_t^{\U, \QP}\bigl(\cdc^{\U}(u)^{\frac{p}{2}} \bigr) \fstop \qedhere$$ 
 \end{proof}
 
 \begin{cor}[Local log-Sobolev inequality] \label{c:LLSI}
Let $\mu=\sine_\beta$ with $\beta>0$ and suppose that the form $(\E^{\U, \QP}, \dom{\E^{\U, \QP}})$ is quasi-regular.
Then,  for every positive~$u \in \dom{\E^{\U, \QP}}$ and $t \ge 0$,
\begin{align*}
&T^{\U, \QP}_t(u\log u)- T^{\U, \QP}_tu\log T^{\U, \QP}_t u \le t T^{\U, \QP}_t\biggl( \frac{\cdc^{\U}(u)}{u} \biggr) \comma
\\
&T^{\U, \QP}_t(u\log u)- T^{\U, \QP}_tu\log T^{\U, \QP}_t u \ge t \frac{\cdc^{\U}(T^{\U, \QP}_t u)}{T^{\U, \QP}_t u}  \fstop
\end{align*}
 \end{cor}
 \begin{proof}
 The result follows by $\BE_1(0,\infty)$ in~Cor.~\ref{p:PBE} and \cite[Thm.~5.5.2, Prop.~5.7.1]{BakGenLed14}.
 \end{proof}

 \begin{cor}[local hyper-contractivity] \label{c:LHC}
Let $\mu=\sine_\beta$ with $\beta>0$ and suppose that the form $(\E^{\U, \QP}, \dom{\E^{\U, \QP}})$ is quasi-regular.
 Then, for every $t>0$, $0<s \le t$, and $1 < p <q<\infty$ so that 
 $$\frac{q-1}{p-1}=\frac{t}{s} \comma$$
 it holds that 
 $$\Bigl(T_s^{\U, \QP}(T_{t-s}^{\U, \QP}u)^q\Bigr)^{1/q} \le \Bigl(T_t^{\U, \QP}u^p\Bigr)^{1/p} \cquad u \ge 0 \fstop$$
 \end{cor}
 \begin{proof}
 The result follows by $\BE_1(0,\infty)$ in~Cor.~\ref{p:PBE} and \cite[Thm.~5.5.5]{BakGenLed14}.
 \end{proof}

\section{Dimension-free and log Harnack inequalities} \label{sec:LH}
In this section, we prove functional inequalities associated with the $L^2$-transportation-type extended distance~$\bar{\mssd}_\U$ given in~\eqref{eq:dW2L}.
\begin{thm}\label{t:DFH}
Let $\QP=\sine_\beta$ with $\beta>0$. Then the following hold:
\begin{enumerate}[{\rm (a)}]
\item $(${\bf  log-Harnack inequality}$)$ for every non-negative $u \in L^\infty(\U, \QP)$, $\e \in (0,1]$ and $t>0$, there exists $\Omega \subset \U$ so that $\QP(\Omega)=1$ and 
$$T^{\U, \QP}_t\log (u+\e)(\gamma) \le \log (T^{\U, \QP}_tu(\eta)+\e) + \frac{\bar{\mssd}_\U(\gamma, \eta)^2}{{4t}} \comma \quad \text{$\gamma, \eta \in \Omega$} \ ;$$
\item $(${\bf  dimension-free Harnack inequality}$)$ for every non-negative $u \in L^\infty(\U, \QP)$, $t>0$ and $\alpha>1$, there exists $\Omega \subset \U$ so that $\QP(\Omega)=1$ and 
$$(T^{\U, \QP}_tu)^\alpha(\gamma)\le T^{\U, \QP}_tu^\alpha(\eta) \exp\Bigl\{ \frac{\alpha}{4(\alpha-1){t}}\bar{\mssd}_\U(\gamma, \eta)^2\Bigr\} \comma \quad \text{$\gamma, \eta \in \Omega$} \ ;$$
\item $(${\bf  Lipschitz contraction}$)$ for $u \in \Lip_b(\U, \bar{\mssd}_\U,  \QP)$ and $t>0$, 
$T_t^{\U, \QP}u$ has a $\bar{\mssd}_\U$-Lipschitz $\QP$-modification (denoted by the same symbol~$T_t^{\U, \QP}u$) such that the following estimate holds:
$$\Lip_{\bar{\mssd}_\U}({T}_t^{\U, \QP} u) \le \Lip_{\bar{\mssd}_\U}(u) \ ;$$
\item $(${\bf $L^\infty$-to-$\Lip$ regularisation}$)$ For ~$u \in L^\infty(\QP)$ and any~$t>0$, 
$T_t^{\U, \QP}u$ has a $\bar{\mssd}_\U$-Lipschitz $\QP$-modification (denoted by the same symbol~$T_t^{\U, \QP}u$)
such that the following estimate holds:
\begin{align*}
\Lip_{\bar{\mssd}_\U}({T}_t^{\U, \QP} u) \le \frac{1}{\sqrt{2 t}} \|u\|_{L^\infty(\QP)}  \fstop
\end{align*}
\end{enumerate}
\end{thm}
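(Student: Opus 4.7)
The plan is to reduce each statement to the corresponding $\RCD(0,\infty)$ consequence on the truncated spaces $(\U^k(B_r), \mssd_\U, \QP_r^{k,\eta})$ established in Proposition \ref{p:BE2}, then transfer to the infinite-particle semigroup via Corollary \ref{prop: 1} and the $L^1(\QP)$-strong convergence $T_{r,t}^{\U,\QP} \to T_t^{\U,\QP}$ from Proposition \ref{prop: MGS}. The key structural fact is that $\bar{\mssd}_\U(\gamma,\eta)<\infty$ forces $\gamma_{B_r^c}=\eta_{B_r^c}$ and $\gamma(B_r)=\eta(B_r)$ for some $r>0$ by \eqref{e:LLR2}, so any such pair lies on a common fiber of $\pr_{B_r^c}$ for some integer $r$, with $\bar{\mssd}_\U(\gamma,\eta)=\mssd_\U(\gamma_{B_r},\eta_{B_r})$ and $\QP_r^\gamma=\QP_r^\eta$.

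For such a pair, Corollary \ref{prop: 1} gives $T_{r,t}^{\U,\QP}u(\gamma)=T_t^{\U(B_r),\QP_r^\eta}u_r^\eta(\gamma_{B_r})$ and similarly at $\eta$, so the comparison at $\gamma$ and $\eta$ reduces exactly to a comparison on the $\RCD(0,\infty)$ space $(\U^k(B_r),\mssd_\U,\QP_r^{k,\eta})$. Standard consequences of $\RCD(0,\infty)$ (see \cite{AmbGigSav14}, \cite{Wan14}) then yield the log-Harnack inequality, Wang's dimension-free Harnack inequality, the Lipschitz contraction $\Lip_{\mssd_\U}(T_t v)\le \Lip_{\mssd_\U}(v)$ (combining $\BE(0,\infty)$ with the Sobolev-to-Lipschitz property), and the $L^\infty$-to-$\Lip$ regularization $\Lip_{\mssd_\U}(T_tv)\le \|v\|_\infty/\sqrt{2t}$ — the last two properties were already invoked in the proof of Theorem \ref{t:S=M}. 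Combined with Lemma \ref{l:SEF3}, which provides $\Lip_{\mssd_\U}(u_r^\eta)\le \Lip_{\bar{\mssd}_\U}(u)$, these finite-particle inequalities translate into $r$-truncated versions of (a)--(d) for $T_{r,t}^{\U,\QP}$, valid on sets of full $\QP$-measure.

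To upgrade to $T_t^{\U,\QP}$, the plan is to pass to $r\to\infty$. For (a) and (b): Proposition \ref{prop: MGS} yields a subsequence $r_j\uparrow\infty$ along which $T_{r_j,t}^{\U,\QP}u\to T_t^{\U,\QP}u$ pointwise $\QP$-a.e., and intersecting the countably many full-measure sets (one per $r_j$) produces a single $\Omega\subset\U$ with $\QP(\Omega)=1$ on which every $r_j$-truncated inequality holds for all $\gamma,\eta\in\Omega$. For a pair $\gamma,\eta\in\Omega$ with $\bar{\mssd}_\U(\gamma,\eta)<\infty$, choose $r_j$ large enough that $\gamma_{B_{r_j}^c}=\eta_{B_{r_j}^c}$ and pass to the limit in the $r_j$-truncated bound; if $\bar{\mssd}_\U(\gamma,\eta)=+\infty$ the asserted inequality is trivial. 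For (c) and (d), the uniform-in-$r$ $\bar{\mssd}_\U$-Lipschitz bound on $T_{r,t}^{\U,\QP}u$, combined with $L^1$-convergence along $r_j$, produces a $\bar{\mssd}_\U$-Lipschitz modification $\tilde T_t^{\U,\QP}u$ with the stated Lipschitz constant by a standard equicontinuity-plus-a.e.-convergence argument restricted to each fiber.

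The main obstacle will be constructing the single $\QP$-full measure set $\Omega$ that supports the pointwise inequalities for \emph{every} pair at finite $\bar{\mssd}_\U$-distance simultaneously, rather than only $\QP\otimes\QP$-almost everywhere off a suitable exceptional set; this is resolved by the countable exhaustion over $r\in\N$, exploiting that every pair at finite $\bar{\mssd}_\U$-distance is captured at some integer radius. A secondary technical point is that in (c) and (d) one needs honestly (everywhere-defined) Lipschitz representatives rather than a merely measurable almost-everywhere notion; this is furnished at each finite-$r$ stage by the Sobolev-to-Lipschitz property of the $\RCD(0,\infty)$ space $(\U^k(B_r),\mssd_\U,\QP_r^{k,\eta})$ and survives the limit because the Lipschitz bound is $r$-independent.
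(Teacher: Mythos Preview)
Your proposal is correct and follows essentially the same route as the paper: reduction to the $\RCD(0,\infty)$ consequences on $(\U^k(B_r),\mssd_\U,\QP_r^{k,\eta})$ via Corollary~\ref{prop: 1}, a countable intersection of full-measure sets indexed by $r\in\N$ together with a subsequence along which $T_{r,t}^{\U,\QP}\to T_t^{\U,\QP}$ pointwise $\QP$-a.e., and the observation that any pair at finite $\bar{\mssd}_\U$-distance is captured at some integer radius. For (c) and (d) the paper makes the ``equicontinuity-plus-a.e.-convergence'' step explicit by first obtaining the Lipschitz bound on a $\QP$-full set $\Omega$ and then invoking the McShane extension theorem to produce the everywhere-defined $\bar{\mssd}_\U$-Lipschitz modification.
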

\begin{rem}[Non-triviality of $\bar\mssd_\U$] \label{r:UME}
As $\bar\mssd_\U$ is an extended distance, one might wonder if the RHS of (a) and (b) could be trivial, i.e., $\bar\mssd_\U(\gamma, \eta)=+\infty$ whenever $\gamma, \eta \in \Omega$ and $\gamma \neq \eta$.
When $\QP$ is tail-trivial and number rigid (e.g., both are known for~$\beta=2$),  this is not the case: 
Let $\Omega \subset \U$ be the set of full measure taken in (a) or (b). Let $\Lambda \subsetneq \Omega$ be an arbitrary subset such that $\QP(\Lambda)>0$ and $\QP(\Omega \setminus \Lambda)>0$. We write $\Omega_1:=\Omega \setminus \Lambda$.
Due to~\eqref{e:FD}, there exists $\Omega_2 \subset \Omega_1$ with $\QP(\Omega_1\setminus \Omega_2)=0$ such that for every $\gamma \in \Omega_2$, there exists $\eta \in \Lambda$ satisfying $\bar\mssd_\U(\gamma, \eta)<+\infty$, and by construction, $\gamma \neq \eta$. 
\end{rem}

\begin{rem}
In Kopfer--Sturm~\cite{KopStu21}, they proved the equivalence between the $\RCD$ condition and the dimension-free Harnack inequality in the framework of metric measure spaces. We cannot, however,  apply a similar proof to our setting because our space is not a metric measure space in their sense due to the fact that $\bar\mssd_\U$ is an extended  distance. We prove the dimension-free Harnack inequality by a finite-dimensional approximation.
\end{rem}

\begin{proof}[Proof of Thm.~\ref{t:DFH}]
We prove (a).
By the relation between $T^{\U, \QP}_{r, t}$ and~$T^{\U(B_r), \QP_r^{\cdot}}_t(\cdot_{B_r})$ in~Cor.~\ref{prop: 1}, there exists a measurable set~$\Omega^r_{\mathsf{sem}} \subset \U$ with $\QP(\Omega^r_{\mathsf{sem}})=1$ so that for every $\eta \in \Omega^r_{\mathsf{sem}}$
\begin{align}\label{e:semSO}
T^{\U, \QP}_{r, t}(\eta)=T^{\U(B_r), \QP_r^{\eta}}_t(\eta_{B_r}) \fstop
\end{align}
Let $u \in L^\infty(\QP)$. Thanks to~Lem.~\ref{l:FL},
there exists $\Omega^r_\infty \subset \dUpsilon$ so that $\QP(\Omega^r_\infty)=1$ and 
$$u_r^\eta \in L^\infty(\QP_r^\eta), \quad \eta \in \Omega^r_\infty,\quad r \in \N \fstop$$
By~Prop.~\ref{p:BE2}, there exists a measurable set~$\Omega^r_{\mathsf{rcd}} \subset \U$ so that $\QP(\Omega^r_{\mathsf{rcd}})=1$ and $(\U^k, \mssd_\U, \mu_r^{k, \eta})$ is $\RCD(0,\infty)$ with $k=k(\eta)$ as in \eqref{e:R1} for every $\eta \in \Omega^r_{\mathsf{rcd}}$. 
Let $\Omega^r:=\Omega^r_{\mathsf{sem}}\cap\Omega_\infty^r \cap \Omega_{\mathsf{rcd}}^r$. As the log-Harnack inequality holds in RCD~spaces (see, \cite[Lem.~4.6]{AmbGigSav15}), the following holds for every $\eta \in \Omega^r$ and $k=k(\eta)$ and $\e\in (0,1]$
\begin{align} \label{e:LHR}
T^{\U^k(B_r), \QP_r^{k, \eta}}_t\log (u_r^\eta+\e)(\gamma) \le \log \Bigl(T^{\U^k(B_r), \QP_r^{k, \eta}}_tu_r^\eta(\zeta)+\e\Bigr) + \frac{1}{4t}\mssd_\U(\gamma, \zeta)^2\comma 
\end{align}
for every $\gamma, \zeta \in \U^k(B_r)$. 
Noting  the convergence of the semigroups~$\sem{T_{r, t}^{\U, \QP}}$ to~$\sem{T_t^{\U, \QP}}$ in the $L^2(\U, \QP)$-operator sense by~Prop.~\ref{prop: MGS}, there exist $\Omega_{\mathsf{con}} \subset \U$ with $\QP(\Omega_{\mathsf{con}})=1$ and a (non-relabelled) subsequence of $(r)_{r \in \N}$ so that for every $\gamma \in \Omega_{\mathsf{con}}$ 
\begin{align} \label{e:semO}
&T^{\U, \QP}_{r, t}\log (u+\e)(\gamma) \xrightarrow{r \to \infty} T^{\U, \QP}_{t}\log (u+\e)(\gamma) \comma 
\\
&\log (T^{\U, \QP}_{r, t}u(\gamma)+\e) \xrightarrow{r \to \infty}  \log (T^{\U, \QP}_{t}u(\gamma)+\e) \fstop \notag
\end{align}
Let $\Omega=\Omega_{\mathsf{con}}\cap_{r\in \N}\Omega^r$, where  $\mu(\Omega)=1$ by construction. 
We now prove 
\begin{align} \label{eq: SL: 2}
T^{\U, \QP}_{t}\log (u+\e)(\gamma) \le \log (T^{\U, \QP}_{t}u(\eta)+\e) + \frac{1}{4t}\bar{\mssd}_\U(\gamma, \eta)^2\comma \quad \text{$\gamma, \eta \in \Omega$} \fstop
\end{align}
We may assume that $\bar{\mssd}_\U(\gamma, \eta)<+\infty$, otherwise there is nothing to prove. 
Thus, by~\eqref{e:LLR2}, there exists $s>0$ so that for every $r \ge s$  
 \begin{align} \label{e:NRS}
 \gamma_{B_{r}^c}=\eta_{B_{r}^c} \comma \quad \gamma(B_r)=\eta(B_r)\fstop
 \end{align}  
By \eqref{e:semSO}, \eqref{e:LHR}, and \eqref{e:NRS}, we have 
\begin{align} \label{eq: LH:3}
 T^{\U, \QP}_{r, t}\log (u+\e)(\gamma)  
 & =  T^{\U, \QP}_{r, t}\log (u+\e)(\gamma_{B_r} + \gamma_{B_r^c} )  
 \\
 &= T^{\U(B_r), \QP_r^{\gamma}}_t\log (u_r^{\gamma}+\e)(\gamma_{B_r})  \notag
 \\
 & \le \log (T^{\U(B_r), \QP_r^{\gamma}}_tu_r^\gamma(\eta_{B_r})+\e) +\frac{1}{4t} \mssd_\U(\gamma_{B_r}, \eta_{B_r})^2 \notag
 \\
  &=\log (T^{\U, \QP}_{r, t}u(\eta) +\e)+ \frac{1}{4t}\bar{\mssd}_\U(\gamma, \eta)^2 \notag \fstop
\end{align}
Therefore, by letting $r \to \infty$ with the $L^2$-strong operator convergence~\eqref{e:semO}, we obtain~\eqref{eq: SL: 2}, which completes the proof of (a). 

The proof of (b) follows precisely in the same strategy as above by replacing $T^{\U, \QP}_{t}\log (u+\e)$,  $\log (T^{\U, \QP}_{t}u+\e)$ and $\frac{1}{4t}\bar{\mssd}_\U(\gamma, \eta)^2$ by~$(T^{\U, \QP}_tu)^\alpha$, $T^{\U, \QP}_tu^\alpha$ and $\frac{\alpha}{4(\alpha-1)t}\bar{\mssd}_\U(\gamma, \eta)^2$ respectively, and noting that the dimension-free Harnack inequality holds on~$\RCD(K,\infty)$ spaces (\cite[Thm.~3.1]{Li15}).

The proof of (c): Note that $u_r^\eta \in \Lip(\U(B_r), \mssd_\U)$ whenever $u \in \Lip(\U, \bar{\mssd}_\U)$ and $\Lip_{\mssd_\U}(u_r^\eta) \le \Lip_{\bar{\mssd}_\U}(u)$ by Lem.~\ref{l:SEF3}.
Note also that the sought conclusion of (c) can be rephrased as 
$$\Bigl|{T}^{\U, \QP}_tu(\gamma)-{T}^{\U, \QP}_tu(\eta)\Bigr| \le \Lip_{\bar{\mssd}_\U}(u) \bar{\mssd}_\U(\gamma, \eta) \cquad \gamma, \eta \in \U\fstop$$
Thus, by the same proof strategy as in (a) replacing~$T^{\U, \QP}_{t}\log (u+\e)(\gamma)$ and~$\log (T^{\U, \QP}_{t}u(\eta)+\e)$ with~$T^{\U, \QP}_{t}u(\gamma)$ and~$T^{\U, \QP}_{t}u(\eta)$, and noting that the Lipschitz contraction property holds on RCD spaces (\cite[(iv) in Thm.~6.1]{AmbGigSav14b}), we conclude that there exists $\Omega \subset \U$ with $\QP(\Omega)=1$ so that 
$$\Bigl|T^{\U, \QP}_t(\gamma)-T^{\U, \QP}_t(\eta) \Bigr|\le \Lip_{\bar{\mssd}_\U}(u) \bar{\mssd}_\U(\gamma, \eta) \cquad \gamma, \eta \in \Omega\fstop$$
The conclusion now follows from the McShane extension Theorem (for extended metric spaces, see~\cite[Lem.~2.1]{LzDSSuz20}).

The proof of (d) is the same as that of (c) but using the $L^\infty$-to-$\Lip$ property (\cite[Thm.~6.5]{AmbGigSav14b}) in $\RCD(K,\infty)$ spaces instead of \cite[(iv) in Thm.~6.1]{AmbGigSav14b}).
The proof is complete.
\end{proof}

\begin{cor} \label{cor:DLA}
Let $\QP=\sine_\beta$ with $\beta>0$. Then
$$\text{$\Lip_b(\U, \bar{\mssd}_\U, \QP)$ is dense in $\dom{\E^{\U, \QP}}$. }$$
\end{cor}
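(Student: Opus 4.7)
The plan is to combine a truncation step (using Markovianity of $(\E^{\U, \QP}, \dom{\E^{\U, \QP}})$) with a smoothing step via the semigroup $\{T_t^{\U, \QP}\}_{t \ge 0}$, exploiting the $L^\infty(\QP)$-to-$\Lip(\bar{\mssd}_\U, \QP)$ regularisation property from Thm.~\ref{t:DFH}(d). More precisely, given $u \in \dom{\E^{\U, \QP}}$ and $\e>0$, we will produce some $v \in \Lip_b(\bar{\mssd}_\U, \QP)$ with $\|u-v\|_{\dom{\E^{\U, \QP}}} < \e$.

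First I would reduce to the case $u \in \dom{\E^{\U, \QP}} \cap L^\infty(\QP)$. Since $(\E^{\U, \QP}, \dom{\E^{\U, \QP}})$ is a (symmetric) Dirichlet form, the normal contraction $u_N := (-N) \vee u \wedge N$ belongs to $\dom{\E^{\U, \QP}} \cap L^\infty(\QP)$ and satisfies $u_N \to u$ in the graph norm $\|\cdot\|_{\dom{\E^{\U, \QP}}} = \sqrt{\E^{\U, \QP}(\cdot) + \|\cdot\|_{L^2(\QP)}^2}$ as $N \to \infty$; this is the standard density of bounded functions in a Dirichlet form (see e.g.~\cite[Thm.~1.4.2(v)]{FukOshTak11}). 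So it suffices to approximate each such $u_N$ by elements of $\Lip_b(\bar{\mssd}_\U, \QP)$ in the form norm.

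Next, I would apply the semigroup as a smoother. Fix $u \in \dom{\E^{\U, \QP}} \cap L^\infty(\QP)$ and set $v_t := T_t^{\U, \QP} u$ for $t > 0$. By the sub-Markovian property, $\|v_t\|_{L^\infty(\QP)} \le \|u\|_{L^\infty(\QP)}$. By Thm.~\ref{t:DFH}(d), $v_t$ admits a $\bar{\mssd}_\U$-Lipschitz $\QP$-modification $\tilde{v}_t$ with
\begin{equation*}
\Lip_{\bar{\mssd}_\U}(\tilde v_t) \le \tfrac{1}{\sqrt{2t}} \|u\|_{L^\infty(\QP)}.
\end{equation*}
Hence $\tilde v_t \in \Lip_b(\bar{\mssd}_\U, \QP)$, and as an element of $\dom{\E^{\U, \QP}}$, $\tilde v_t$ coincides with $v_t = T_t^{\U, \QP} u$. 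Strong continuity of the semigroup $\{T_t^{\U, \QP}\}$ on the Hilbert space $(\dom{\E^{\U, \QP}}, \|\cdot\|_{\dom{\E^{\U, \QP}}})$, which is a general property of $L^2$-semigroups associated to closed symmetric forms (see e.g.~\cite[\S I.2]{MaRoe90}), yields $\tilde v_t = T_t^{\U, \QP} u \to u$ in $\dom{\E^{\U, \QP}}$ as $t \downarrow 0$. Combining with the truncation step completes the proof.

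The main (mild) subtlety will be being careful that the Lipschitz modification $\tilde v_t$ obtained pointwise from Thm.~\ref{t:DFH}(d) genuinely represents the same element of $L^2(\QP)$ (hence of $\dom{\E^{\U, \QP}}$) as $v_t$; this is however immediate from the definition of a $\QP$-modification. Everything else rests on ingredients already proved earlier in the paper: Markovianity of $\E^{\U, \QP}$ (for truncation), the Lipschitz-Feller regularisation Thm.~\ref{t:DFH}(d) (for producing the Lipschitz representative), sub-Markovianity (for boundedness), and strong continuity of the semigroup on the form domain (for convergence).
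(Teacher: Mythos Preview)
Your proof is correct, but it takes a different route from the paper's. The paper argues via Lem.~\ref{l:MU}: it notes that $\Lip_b(\bar{\mssd}_\U,\QP)$ is dense in $L^2(\QP)$ (inherited from $\Lip_b(\mssd_\U,\QP)$), and then uses the Lipschitz \emph{contraction} property Thm.~\ref{t:DFH}(c) to show that the semigroup leaves $\Lip_b(\bar{\mssd}_\U,\QP)$ invariant; the abstract lemma (whose proof goes through essential self-adjointness of the generator on semigroup-invariant cores, via \cite[Thm.~X.49]{ReeSim75}) then gives density in the form domain. Your argument instead combines the Markovian truncation $u\mapsto(-N)\vee u\wedge N$ with the $L^\infty$-to-$\Lip$ \emph{regularisation} Thm.~\ref{t:DFH}(d) and the strong continuity of $T_t^{\U,\QP}$ on $(\dom{\E^{\U,\QP}},\|\cdot\|_{\dom{\E^{\U,\QP}}})$, which is a direct spectral-theoretic fact. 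Your approach is a bit more self-contained (no appeal to the Reed--Simon core theorem) and exploits the stronger regularising property (d); the paper's approach is more modular and only needs the weaker Lipschitz contraction (c).
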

\begin{proof}
Since $\Lip_b(\U, \bar{\mssd}_\U, \QP)$ is dense in $L^2(\U, \QP)$ by (c) in Rmk.~\ref{r:ped}, the statement follows from (c) in~Thm.~\ref{t:DFH} and Lem.~\ref{l:MU}.
\end{proof}

\section{Gradient flow} \label{sec:GF}
 In this section, we study  the dual flow of $\{T_t^{\U, \QP}\}_{t \ge 0}$ in the space $\mathcal P(\U)$ of Borel probability measures on $\U$. In particular, when $\beta=1,2,4$, the dual flow of the infinite Dyson Brownian motion is identified to the unique $\mssW_\E$-gradient flow of the Boltzmann--Shannon entropy associated with $\QP=\sine_\beta$, where $\mssW_\E$ is a  Benamou--Brenier type extended distance on $\mathcal P(\U)$, whch is purely given by the Dirichlet form data $(\E^{\U, \QP}, \dom{\E^{\U, \QP}})$. 

\paragraph{Boltzmann--Shannon entropy and Fisher information} Let $(\mathcal P(\U), \tau_\mrmw)$ be the space of all Borel probability measures on~$(\U, \tau_\mrmv)$ endowed with the weak topology $\tau_\mrmw$, i.e., the topology induced by the duality of $C_b(\U, \tau_\mrmv)$. Let $\mathcal P_\QP(\U)$ be the subspace of~$\mathcal P(\U)$ consisting of measures absolutely continuous with respect to $\QP$. We write $\nu=\rho \cdot \mu$ if $\rho=\frac{\diff \nu}{\diff \QP}$. 
\begin{itemize}
\item The {\it Boltzmann--Shannon entropy} $\Ent_\QP: \mathcal P_\QP(\U) \to \R\cup\{+\infty\}$ is defined as 
 $$\Ent_\QP(\nu):=\int_{\U} \rho \log \rho \diff \QP \comma \quad \nu=\rho\cdot \mu \fstop$$
The domain of $\Ent_\QP$ is denoted by $\dom{\Ent_\QP}:=\{\nu \in \mathcal P_\QP(\U): \Ent_\QP(\nu)<+\infty\}$. 
\item The {\it Fisher information} ${\sf F}_\QP:  \mathcal P_\QP(\U) \to \R\cup\{+\infty\}$ is defined as 
 $${\sf F}_\QP(\nu):=8\E^{\U, \QP}(\sqrt{\rho}) \comma \quad \nu=\rho\cdot \mu \fstop$$
The domain of ${\sf F}_\QP$ is denoted by $\dom{{\sf F}_\QP}:=\{\nu \in \mathcal P_\QP(\U): {\sf F}_\QP(\nu)<+\infty\}$.
\end{itemize}

\paragraph{The $L^2$-Monge--Kantorovich--Rubinstein--Wasserstein distance} For $\nu, \sigma \in \mathcal P(\U)$, we define an extended distance~$\mssW_{\mssd_\U}$ as 
\begin{align} \label{d:MKRW}
\mssW_{\mssd_\U}^2(\nu, \sigma):=\inf_{{\sf c} \in {\sf Cpl}(\nu, \sigma)} \int_{\U^{\times 2}} \mssd_{\U}^2(\gamma, \eta) \diff {\sf c}(\gamma, \eta) \comma
\end{align}
where ${\sf Cpl}(\nu, \sigma)$ is the space of all Borel probability measures on~$(\U^{\times 2},\tau_\mrmv^{\times 2})$ satisfying ${\sf c}(\Xi \times \U)=\nu(\Xi)$ and ${\sf c}(\U \times \Lambda)=\sigma(\Lambda)$ for every $\Xi, \Lambda \in \mathscr B(\U, \tau_\mrmv)$. 

\paragraph{Benamou--Brenier-like distance} We define a sub-algebra $\mathcal L$ in~$\dom{\E^{\U, \QP}}$:
\begin{align} \label{d:SAL}
\mathcal L=\{u \in \dom{\E^{\U, \QP}}: u \in L^\infty(\U, \QP) \comma \cdc^{\U}(u) \in L^\infty(\U, \QP)\} \fstop
\end{align}
Let $L^2((0, 1))$ denote the space of equivalence classes of square integrable functions with respect to the Lebesgue measure on the open interval~$(0, 1) \subset \R$.
\begin{defs}[{Continuity inequality} {\cite[(10.6)]{AmbErbSav16}}] \label{d:CI}
Given a family of probability {densities}~$(\rho_t)_{t\in[0, 1]} \subset L^1_+(\U, \QP)$, we write $(\rho_t)_{t \in [0, 1]} \in \mathsf{CI}^2(\E^{\U, \QP})$ if there exists $c \in L^2\bigl((0, 1)\bigr)$  so that 
\begin{align} \label{e:CE}
\biggl| \int_{\U} u \rho_t \diff \QP - \int_{\U} u \rho_s \diff \QP \biggr| \le \int_s^t c(r) \biggl(\int_{\U}\cdc^{\U, \QP}(u) \rho_r \diff \QP \biggr)^{1/2} \diff r \comma
\end{align}
for every $u \in \mathcal L$ and $0 \le s \le t \le 1$. The least $c$ in \eqref{e:CE} is denoted by $\|\rho'_t\|$. 
\end{defs}

The following definition is motivated by the celebrated Benamou--Brenier formula that is a variational characterisation of the optimal transportation distance in terms of the continuity equation. 
\begin{defs}[Benamou--Brenier-like extended distance {\cite[Dfn.~10.4]{AmbErbSav16}}]
For~$\nu, \sigma \in \mathcal P_\QP(\U)$,
\begin{align} \label{e:CE1}
\mssW_{\E}(\nu, \sigma)^2:=\inf\biggl\{ \int_0^1 \|\rho'_t\|^2 \diff t : (\rho_t) \in \mathsf{CE}^2(\E^{\U, \QP}) \comma \nu=\rho_0\cdot \QP\comma \sigma=\rho_1\cdot \QP\biggr\}\fstop
\end{align}
If there is no $(\rho_t)_{t \in [0,1]} \in \mathsf{CI}^2(\E^{\U, \QP})$ connecting $\nu$ and $\sigma$, we define $\mssW_\E(\nu, \sigma)=+\infty$. We will see in Cor.~\ref{t:mainc} that $\mssW_\E(\mathcal T_t^{\U, \QP}\nu, \nu)<+\infty$ for every $\nu \in \dom{\Ent_\QP}$ and $t \ge 0$, so that $\mssW_\E$ is non-trivial. 
\end{defs}

\begin{rem}
The extended distance $\mssW_{\E}$ on $\mathcal P(\U)$ is {\it intrinsic} for  $(\E^{\U, \QP}, \dom{\E^{\U, \QP}})$ in the sense that it is determined only by the data of the Dirichlet form $(\E^{\U, \QP}, \dom{\E^{\U, \QP}})$. It is open whether this intrinsic distance coincides with $\mssW_{\mssd_\U}$ given by the metric data $\mssd_\U$. The one inequality $\mssW_{\mssd_\U} \le \mssW_{\E}$ is true due to the Rademacher-type property in Prop.~\ref{p:DF}, which will be seen in Prop.~\ref{p:LDC} below. 
\end{rem}

\begin{prop}[Properties of $\mssW_\E$] \label{p:LDC} The following hold:
\begin{enumerate}[(i)]
\item $\mssW_{\E}$ is a complete length extended distance on~$\mathcal P_\QP(\U)$. Furthermore,  $\mssW_{\E}^2$ is jointly convex in $\mathcal P_\QP(\U)^{\times 2}$. 
\item The following inequality holds: $$\mssW_{\mssd_\U} \le \mssW_{\E} \fstop$$
\item Let $\nu_t=\rho_t\cdot\QP$ with $\rho_t=T_t^{\U, \QP}\rho_0$ and $\rho_0 \in L^2(\U, \QP)$. Then, 
$(\nu_t)_{t\in [0,1]} \in \mathsf{CE}^2(\E^{\U, \QP})$ and 
\begin{align}\label{ineq:RF}
\|\rho_t'\|^2 \le \Fis(\nu_t) \cquad t>0 \fstop
\end{align}
\end{enumerate}
\end{prop}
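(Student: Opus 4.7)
The three statements fall within the general theory of Benamou--Brenier type distances associated with a Dirichlet form, as developed in~\cite{AmbErbSav16}. My plan is to verify each item by specializing that theory to the present form $(\E^{\U, \QP}, \dom{\E^{\U, \QP}})$, relying on the Rademacher-type property Prop.~\ref{p:DF}, the density result Cor.~\ref{cor:DLA}, and the algebra structure of $\mathcal L$ in \eqref{d:SAL}. I would prove (iii) first because it is the most concrete and immediately supplies non-trivial $\mathsf{CE}^2$-curves; then (i), whose metric axioms follow by reparametrization and concatenation; and finally (ii), which is the least routine.

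For (iii), I would first reduce to $\rho_0 \in \dom{\E^{\U,\QP}}\cap L^2(\QP)$ by the regularizing property of $T_t^{\U,\QP}$ (the general case is recovered replacing $t$ by $t+\delta$ and letting $\delta \downarrow 0$). Then for $u \in \mathcal L$ and $0 \le s \le t$ the standard identity gives
\begin{equation*}
\int_\U u(\rho_t-\rho_s)\diff\QP = -\int_s^t \E^{\U, \QP}(u,\rho_r)\,\diff r.
\end{equation*}
Applying the chain rule $\cdc^\U(u,\rho)=2\sqrt{\rho}\,\cdc^\U(u,\sqrt{\rho})$ (valid for bounded $u\in\mathcal L$ and $\sqrt{\rho_r}\in\dom{\E^{\U,\QP}}$), together with Cauchy--Schwarz, one controls each integrand by $\bigl(\int_\U \cdc^\U(u)\rho_r\,\diff\QP\bigr)^{1/2}$ times a multiple of $\bigl(\int_\U\cdc^\U(\sqrt{\rho_r})\diff\QP\bigr)^{1/2}=(\Fis_\QP(\nu_r)/4)^{1/2}$. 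This simultaneously proves $(\nu_t)\in \mathsf{CE}^2(\E^{\U,\QP})$ and the action bound \eqref{ineq:RF} (modulo the normalization convention). The main technical concern here is the chain rule: I would justify it by approximating $\rho_r$ by $\rho_r\vee\varepsilon$ and passing to the limit, using the locality of $\cdc^\U$ established in Prop.~\ref{p:DF}.

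For (i), symmetry is obtained by time-reversing $\rho_{1-t}$; triangle inequality by concatenating two curves after reparametrizing them to equal speed, combined with Cauchy--Schwarz on the action; joint convexity of $\mssW_\E^2$ follows from the linearity of $\mathsf{CE}^2$ in $\rho$ together with the joint convexity in $(\rho,\rho')$ of $(\rho,c)\mapsto c^2$ restricted to the set of admissible pairs; the length property is built into the variational definition. Completeness is the only delicate point, and I would argue, as in~\cite[\S 5]{AmbErbSav16}, by an Arzel\`a--Ascoli-type extraction: an $\mssW_\E$-Cauchy sequence lifts to curves of uniformly bounded action, whose limit is again a $\mathsf{CE}^2$-curve by lower semicontinuity of $\|\rho'_t\|$ under narrow convergence. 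For (ii), I would use Kantorovich duality together with a Hopf--Lax argument: for $\phi \in \Lip_b(\U,\mssd_\U)$ set $Q_t\phi(\gamma):=\inf_\eta\bigl\{\phi(\eta)+\tfrac{1}{2t}\mssd_\U(\gamma,\eta)^2\bigr\}$; then $Q_t\phi$ is $\mssd_\U$-Lipschitz, the Hamilton--Jacobi sub-solution property holds, and Prop.~\ref{p:DF} combined with $\Lip_{\bar{\mssd}_\U}(u)\le \Lip_{\mssd_\U}(u)$ from~\eqref{e:LLR} gives $\cdc^\U(Q_t\phi) \le |\mathsf D_{\mssd_\U} Q_t\phi|^2$. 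Evaluating $F(t):=\int_\U Q_t\phi\,\diff\nu_t$ along a $\mathsf{CE}^2$-curve and using Cauchy--Schwarz yields $F(1)-F(0) \le \tfrac{1}{2}\int_0^1\|\rho'_t\|^2\diff t$; taking the supremum over $\phi$ and the infimum over curves concludes $\mssW_{\mssd_\U}\le\mssW_\E$. The main obstacle is that $\mssd_\U$ is an extended, $\tau_\mathrm{v}$-lower-semicontinuous but non-continuous distance, so neither the Hopf--Lax regularity theory nor the Kantorovich duality applies off-the-shelf; I would handle this by truncating to the $\tau_\mathrm{v}$-closed subsets $\{\gamma:\gamma(B_r)\le N\}$ where $\mssd_\U$ becomes effectively finite and $\tau_\mathrm{v}$-compatible, proving the bound for the truncated problem, and then passing to the limit $N,r\to\infty$ using the monotonicity in Prop.~\ref{p:mono} and the fact that any $\mathsf{CE}^2$-curve with finite action must concentrate on such sets.
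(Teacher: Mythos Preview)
For (i) and (iii) your outline is essentially the content behind the paper's citations to \cite{AmbErbSav16}; the only variation is that the paper derives completeness in (i) from (ii) together with the completeness of $\mssW_{\mssd_\U}$, rather than via an Arzel\`a--Ascoli extraction.

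The real divergence is in (ii). You attempt a direct Hopf--Lax/Kantorovich argument for $\mssd_\U$; the paper instead routes through the \emph{intrinsic} distance $\mssd_\E(\gamma,\eta):=\sup\{u(\gamma)-u(\eta):u\in C_b(\U,\tau_\mrmv),\ \cdc^\U(u)\le 1\}$. The Rademacher-type property (Prop.~\ref{p:DF}) gives $\mssd_\U\le\mssd_\E$, hence $\mssW_{\mssd_\U}\le\mssW_{\mssd_\E}$; then \cite[Prop.~7.4]{AmbErbSav16} yields $\mssW_{\mssd_\E}\le\mssW_{\Ch_{\mssd_\E}}$ and \cite[Thm.~12.5]{AmbErbSav16} yields $\mssW_{\Ch_{\mssd_\E}}\le\mssW_\E$. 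The gain is that all the delicate Hopf--Lax theory for extended, non-topological distances is already carried out in \cite{AmbErbSav16} for $\mssd_\E$, so nothing has to be redone. By contrast, two steps of your direct route are not justified as stated: the truncation claim that finite-action $\mathsf{CE}^2$-curves ``must concentrate on'' $\{\gamma:\gamma(B_r)\le N\}$ has no evident reason to hold (and Prop.~\ref{p:mono} concerns monotonicity of the truncated forms $\E^{\U,\QP}_r$, not restrictions of $\mssd_\U$), while your Hopf--Lax estimate requires the \emph{pointwise} bound $\cdc^\U(Q_t\phi)\le|\mathsf D_{\mssd_\U}Q_t\phi|^2$, whereas Prop.~\ref{p:DF} only supplies $\cdc^\U(u)\le\Lip_{\bar{\mssd}_\U}(u)^2$ with the global constant. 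The intrinsic-distance detour sidesteps both issues at once.
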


\begin{proof}
(i): The statement follows from~\cite[5th paragraph on p.113]{AmbErbSav16}. We note that the completeness follows from the completeness of $\mssW_{\mssd_\U}$ and the inequality~$\mssW_{\mssd_\U} \le \mssW_{\E}$, which will be proven in (ii). 

(ii): Let $\mssd_{\E}(\gamma, \eta):=\sup\{u(\gamma)-u(\eta): \cdc^{\U}(u) \le 1 \comma u \in \dom{\E^{\U, \QP}} \cap C_b(\U, \tau_\mrmv)\}$ be the intrinsic distance associated with $(\E^{\U, \QP}, \dom{\E^{\U, \QP}})$. By the Rademacher-type property in Prop.~\ref{p:DF}, we have 
$\mssd_{\U} \le \mssd_{\E}$ (see \cite[the first half of the proof of Thm.~5.25]{LzDSSuz21}).  In particular, 
$$\mssW_{\mssd_{\U}} \le \mssW_{\mssd_{\E}} \comma$$
where $\mssW_{\mssd_{\E}}$ denotes the extended distance \eqref{d:MKRW} induced by $\mssd_{\E}$ in place of $\mssd_\U$.
By {\cite[(a) Prop.~7.4]{AmbErbSav16}}, 
$$\mssW_{\mssd_{\E}} \le \mssW_{\Ch^{\mssd_\E, \QP}} \comma$$
where $\Ch^{\mssd_{\E}, \QP}$ is the Cheeger energy associated with $(\U, \mssd_{\E}, \QP)$ (see \cite[Dfn.~6.1]{AmbErbSav16}). Furthermore, by \cite[Thm.~12.5]{AmbErbSav16}, we have $\mssW_{\Ch^{\mssd_\E, \QP}} \le \mssW_{\E}$, which completes the proof.  

(iii): This is a consequence of \cite[{(10.5) and} (10.10)]{AmbErbSav16}. 
\end{proof}

\paragraph{Evolutional Variation Inequality} 
{Recall that $\{T_t^{\U, \QP}\}_{t\ge 0}$ is the $L^2$-semigroup associated with~$(\E^{\U, \QP}, \dom{\E^{\U, \QP}})$, and due to~\eqref{e:con1}, it can be extended to the $L^1$-contraction semigroup.  For $\nu=\rho\cdot \mu \in \mathcal P_\QP(\U)$ with $\rho \in L^1_+(\U, \QP)$,} we define {\it the dual flow} $\{\mathcal T_t^{\U, \QP}\}_{t \ge 0}$ as 
 $$\mathcal T_t^{\U, \QP}\nu=(T_t^{\U, \QP}\rho)\cdot \mu \cquad t \ge 0 \fstop$$
The following inequality is called {\it Evolutional Variation Inequality (EVI)}, which is a corollary of Thm.~\ref{t: main}.
 \begin{cor}[EVI] \label{t:mainc}  Suppose that $\QP=\sine_\beta$ with $\beta>0$.
For every $\nu, \sigma\in \dom{\Ent_\QP}$ with $\mssW_\E(\nu, \sigma)<+\infty$, the curve $t \mapsto \mathcal T^{\U, \QP}_t \sigma \in (\mathcal P(\U), \mssW_{\E})$ is locally absolutely continuous, $\Ent_\QP(\mathcal T_t^{\U, \QP}\sigma)<+\infty$, $\mssW_{\E}(\mathcal T_t^{\U, \QP}\sigma, \nu)<+\infty$ for every $t>0$ and 
\begin{align} \label{EVI}
\frac{1}{2}\frac{\diff^+}{\diff t}{\mssW}_{\E}\bigl({\mathcal T^{\U, \QP}_t \sigma}, \nu \bigr)^2  \le \Ent_{\mu}({\nu}) - \Ent_{\mu}({\mathcal T^{\U, \QP}_t \sigma}) \cquad  t>0\fstop \tag{$\EVI(0,\infty)$}
\end{align}
\end{cor}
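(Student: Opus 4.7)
The plan is to deduce $\EVI(0,\infty)$ from $\BE(0,\infty)$ (Thm.~\ref{t: main}) by invoking the abstract equivalence between Bakry--\'Emery gradient estimates and evolutional variation inequalities established, in the intrinsic Dirichlet-form setting with the Benamou--Brenier-like distance $\mssW_\E$, by Ambrosio--Erbar--Savar\'e~\cite{AmbErbSav16}. That machinery is tailor-made for the present situation: one is given a local Dirichlet form with carr\'e du champ on a probability space, and \emph{no} ambient metric compatible with the form is needed, the role of the Wasserstein distance being played intrinsically by $\mssW_\E$ as introduced in~\eqref{e:CE1}.

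First, I would verify the abstract hypotheses needed for \cite[Thms.~12.4--12.5]{AmbErbSav16}: (i)~$(\E^{\U, \QP}, \dom{\E^{\U, \QP}})$ is a local Dirichlet form admitting the carr\'e du champ $\cdc^{\U, \QP}$, by Def.~\ref{d:DFF} and Prop.~\ref{p:DF}; (ii)~the algebra $\mathcal L$ defined in~\eqref{d:SAL} is a dense sub-algebra of $\dom{\E^{\U, \QP}}$ stable under truncation, which follows from Cor.~\ref{cor:DLA} combined with the Rademacher-type bound $\cdc^{\U, \QP}(u) \le \Lip_{\bar\mssd_\U}(u)^2$ in Prop.~\ref{p:DF}, yielding the inclusion $\Lip_b(\bar\mssd_\U, \QP)\subset \mathcal L$; (iii)~$\BE(0,\infty)$, which is exactly Thm.~\ref{t: main}. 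The abstract theorem then delivers the EVI for the dual flow $(\mathcal T_t^{\U, \QP})$ with respect to $\mssW_\E$ and the Boltzmann--Shannon entropy $\Ent_\QP$.

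Next, for the qualitative finiteness assertions I would argue as follows. The fact that $\Ent_\QP(\mathcal T_t^{\U, \QP}\nu)<\infty$ along the flow is a consequence of Jensen's inequality and the $L^1$-contraction of $T_t^{\U, \QP}$ together with the invariance of $\QP$, showing that $t \mapsto \Ent_\QP(\mathcal T_t^{\U, \QP}\nu)$ is non-increasing. For the local absolute continuity of $t \mapsto \mathcal T_t^{\U, \QP}\sigma$ in $\mssW_\E$ and the finiteness of $\mssW_\E(\mathcal T_t^{\U, \QP}\nu, \sigma)$, I would apply Prop.~\ref{p:LDC}(iii) on any compact subinterval $[\e, t]$ to get
\begin{equation*}
\mssW_\E(\mathcal T_t^{\U, \QP}\nu, \mathcal T_\e^{\U, \QP}\nu)^2 \le (t-\e)\int_\e^t \Fis(\mathcal T_s^{\U, \QP}\nu)\diff s,
\end{equation*}
and then use the standard entropy dissipation identity $\frac{\diff}{\diff s}\Ent_\QP(\mathcal T_s^{\U, \QP}\nu) = -\Fis(\mathcal T_s^{\U, \QP}\nu)$, which integrates to $\int_\e^t \Fis \diff s = \Ent_\QP(\mathcal T_\e^{\U, \QP}\nu) - \Ent_\QP(\mathcal T_t^{\U, \QP}\nu) \le \Ent_\QP(\nu)<\infty$. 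Passing $\e\downarrow 0$ and using $\mssW_\E(\nu, \sigma)<\infty$ by hypothesis gives $\mssW_\E(\mathcal T_t^{\U, \QP}\nu, \sigma)<\infty$.

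The main delicacy I expect is the bookkeeping needed to confirm that the extended-metric framework of \cite{AmbErbSav16} applies here: the space $(\U, \bar\mssd_\U, \QP)$ is \emph{not} a metric measure space in the classical sense (the vague topology on $\U$ is strictly weaker than any topology compatible with $\bar\mssd_\U$, and $\bar\mssd_\U$ takes the value $+\infty$ on sets of full $\QP^{\otimes 2}$-measure). However, the whole point of \cite{AmbErbSav16} is that their theory is formulated intrinsically in terms of the Dirichlet form without reference to an ambient metric, so this is a matter of consulting the right statements rather than a genuine analytic obstacle. A secondary, largely cosmetic point is to check that the Lipschitz regularisation of $T_t^{\U, \QP}$ proven in Thm.~\ref{t:DFH}(d) is strong enough to populate $\mathcal L$ with test functions at every positive time, which in turn justifies differentiating the squared distance along the flow.
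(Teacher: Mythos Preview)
Your approach is correct and essentially the same as the paper's: both deduce $\EVI(0,\infty)$ from $\BE(0,\infty)$ (Thm.~\ref{t: main}) by invoking the abstract Dirichlet-form machinery of Ambrosio--Erbar--Savar\'e~\cite{AmbErbSav16}. The paper's proof is in fact a single line citing \cite[Cor.~11.3]{AmbErbSav16} (rather than Thms.~12.4--12.5), and the hypothesis checks and finiteness arguments you spell out are absorbed into that citation; your more detailed bookkeeping is sound but not required at this level of the write-up.
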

\begin{proof}
This follows from Thm.~\ref{t: main} and \cite[Cor.~11.3]{AmbErbSav16}. 
\end{proof}

As a consequence of $\EVI(0,\infty)$, we have the following corollary, see \cite[Thm.~10.14, Cor.~11.2, 11.5, Thm.~11.4]{AmbErbSav16}.
\begin{cor}\label{c:GC} Suppose that $\QP=\sine_\beta$ with $\beta>0$. The following hold:\label{c:GC}
\begin{enumerate}[{\rm (a)}]
\item \label{ccc:1} The space~$(\dom{\Ent_\QP}, {\mssW}_{\E})$ is an extended geodesic metric space: for every pair $\nu, \sigma \in \dom{\Ent_\QP}$ with ${\mssW}_{\E}(\nu, \sigma)<+\infty$, there exists a ${\mssW}_{\E}$-Lipschitz curve $\nu_\cdot: [0,1] \to (\dom{\Ent_\QP},{\mssW}_{\E})$ so that
\begin{align*} 
\nu_0=\nu \cquad \nu_1=\sigma \cquad {\mssW}_{\E}(\nu_t, \nu_s) =|t-s|{\mssW}_{\E}(\nu, \sigma) \cquad s, t \in [0, 1]\fstop
\end{align*}
\item \label{ccc:2} Geodesic convexity: The entropy~$\Ent_\QP$ is ${\mssW}_{\E}$-convex along every ${\mssW}_{\E}$-geodesic $(\nu_t)_{t \in [0,1]}$$:$
\begin{align*}
\Ent_\QP(\nu_t) \le (1-t)\Ent_{\QP}(\nu_0) + t \Ent_{\QP}(\nu_1) \cquad t \in [0,1] \fstop
\end{align*}
\item Wasserstein contraction:
\begin{align*}
\mssW_{\E}\bigl(\mathcal T_t^{\U, \QP}\nu, \mathcal T_t^{\U, \QP}\sigma\bigr) \le \mssW_{\E}(\nu, \sigma)  \cquad t>0 \cquad \nu, \sigma \in \mathcal P_\QP(\U) \fstop
\end{align*}
\item \label{ccc:3}  The descending ${\mssW}_{\E}$-slope of $\Ent_\QP$ coincides with the Fisher information:
\begin{align*}
|{\sf D}_{{\mssW}_{\E}}^{-} \Ent_\QP(\nu)|^2 = {\sf F}(\rho) \cquad \nu=\rho\cdot \QP \in \dom{\Ent_\QP}\fstop
\end{align*}
\item \label{ccc:4}  The set $\Alpha_c:=\{\nu=\rho\cdot \QP \in \mathcal P_\QP(\U): \|\rho\|_{L^\infty(\QP)} \le c\}$ is geodesically convex with respect to~${\mssW}_{\E}$ for every $c>0$. 
\item \label{ccc:5} $L\log L$-regularisation of $\mathcal T_t^{\U, \QP}$:  For every~$\nu=\rho\cdot \QP \in \mathcal P_\QP(\U)$ $($not necessarily in~$\dom{\Ent_\QP}$$)$ and $\sigma \in \dom{\Ent_{\QP}}$, 
$$\Ent_{\QP}(\mathcal T_t^{\U, \QP}\nu) \le \Ent_{\QP}(\sigma) +\frac{1}{2t}{\mssW}_{\E}(\nu, \sigma)^2 \cquad t>0\fstop$$
\end{enumerate}
\end{cor}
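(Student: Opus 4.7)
The plan is to derive all six statements as direct consequences of the $\EVI(0,\infty)$ property established in Cor.~\ref{t:mainc}, combined with the abstract theory of EVI-gradient flows in extended metric measure spaces developed in \cite{AmbErbSav16}. The key observation is that Cor.~\ref{t:mainc} identifies $\mathcal T_t^{\U, \QP}$ as the $\EVI(0,\infty)$-gradient flow of $\Ent_\QP$ with respect to $\mssW_\E$, and all the claimed properties are general features of such flows once the compatibility with the framework of \cite{AmbErbSav16} is verified. This compatibility is granted by Prop.~\ref{p:DF} (which ensures $(\E^{\U, \QP}, \dom{\E^{\U, \QP}})$ is a local Dirichlet form with square field) together with Prop.~\ref{p:LDC} (which yields the continuity equation and its basic properties) and the existence of the sub-algebra $\mathcal L$ defined in \eqref{d:SAL}.

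First, I would invoke \cite[Thm.~10.14 and Cor.~11.5]{AmbErbSav16} to obtain \eqref{ccc:1} and \eqref{ccc:2}: the existence of $\EVI(0,\infty)$-flows forces $\Ent_\QP$ to be $0$-geodesically convex along $\mssW_\E$-geodesics, and conversely produces such geodesics between any pair of measures in $\dom{\Ent_\QP}$ at finite $\mssW_\E$-distance. Statement \eqref{ccc:1} additionally uses that $\mssW_\E$ is a complete length extended distance by Prop.~\ref{p:LDC}(i). Item (c) (Wasserstein contraction) is the classical symmetric pairing of two EVI inequalities, namely apply $\EVI(0,\infty)$ with base points $\nu$ and $\sigma$ respectively and add to cancel the entropy terms; this is recorded as \cite[Cor.~11.2]{AmbErbSav16}.

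For (d), I would apply \cite[Thm.~11.4]{AmbErbSav16}: along an $\EVI$-gradient flow the descending slope of the driving functional equals the squared metric derivative, and this derivative is explicitly computed via the continuity equation. The inequality $|{\sf D}^-_{\mssW_\E}\Ent_\QP|^2(\nu) \le \Fis(\nu)$ follows from Prop.~\ref{p:LDC}(iii), and the converse inequality is precisely the identification provided by the EVI chain rule. For (e), I would combine the preservation of $L^\infty$-bounds by $T_t^{\U, \QP}$ (via sub-Markovianity of the semigroup from Prop.~\ref{p:DF}) with the construction of $\mssW_\E$-geodesics obtained in (a): an inspection of the regularized gradient-flow construction used in \cite[Thm.~10.14]{AmbErbSav16} shows that the interpolating curve lies in $\Alpha_c$ whenever the endpoints do. Finally, (f) is the standard regularizing estimate obtained by integrating $\EVI(0,\infty)$ on $[0,t]$: dropping the entropy term on the right and using monotonicity of $s\mapsto \Ent_\QP(\mathcal T_s^{\U, \QP}\nu)$ yields the displayed inequality, cf.\ \cite[Cor.~11.5]{AmbErbSav16}.

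I do not anticipate a serious technical obstacle: the entire argument is an application of the abstract machinery, and the only substantial verification needed is the one embedded in Cor.~\ref{t:mainc} itself. The delicate point to watch is that $\mssW_\E$ is an \emph{extended} distance which may take the value $+\infty$, so the statements in (a), (b), (c) and (f) must be read conditionally on finiteness of the relevant $\mssW_\E$-distances; this is already the convention adopted in \cite{AmbErbSav16} and matches the formulation of Cor.~\ref{t:mainc}.
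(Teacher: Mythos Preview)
Your proposal is correct and follows essentially the same approach as the paper: the paper's proof consists of a single sentence citing \cite[Thm.~10.14, Cor.s~11.2, 11.5, Thm.~11.4]{AmbErbSav16} as consequences of the $\EVI(0,\infty)$ property from Cor.~\ref{t:mainc}, and you have correctly identified exactly these references and allocated them to the individual items (a)--(f).
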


Finally, we show that the dual flow~$\{\mathcal T_t^{\U, \QP}\}_{t\ge 0}$ is the $\mssW_{\E}$-gradient flow of $\Ent_\QP$. 
\begin{cor}[Gradient flow] \label{c:GF}Suppose that $\QP=\sine_\beta$ with $\beta>0$.
The dual flow~$\bigl\{\mathcal T_t^{\U, \QP}\bigr\}_{t\ge 0}$ is the unique solution to the $\mssW_{\E}$-gradient flow of $\Ent_{\QP}$.  Namely, for every $\nu_0 \in \dom{\Ent_\QP}$, the curve $[0, +\infty) \ni t \mapsto \nu_t=\mathcal T_t^{\U, \QP} \nu_0\in \dom{\Ent_\QP}$ is the unique solution to the energy equality starting at $\nu_0$$:$
\begin{align}
\frac{\diff}{\diff t} \Ent_\QP({\nu_t}) = -|\dot\nu_t|^2 = -|{\sf D}^-_{\mssW_{\E}} \Ent_\QP|^2(\nu_t) \quad\text{a.e.~$t>0$} \fstop
\end{align}
\end{cor}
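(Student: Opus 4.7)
The plan is to promote the $\EVI(0,\infty)$ from Cor.~\ref{t:mainc} to the sought energy dissipation equality via the standard metric-gradient-flow machinery, exploiting the explicit identification of the slope in Cor.~\ref{c:GC}\,\ref{ccc:3} and the geodesic convexity in Cor.~\ref{c:GC}\,\ref{ccc:2}. First I would verify that $t \mapsto \nu_t := \mathcal T_t^{\U, \QP}\nu_0$ is locally absolutely continuous in $(\mathcal P_\QP(\U), \mssW_\E)$ with $\nu_t \in \dom{\Ent_\QP}$ for every $t>0$: this follows from Cor.~\ref{t:mainc} applied with any reference $\sigma$ at finite $\mssW_\E$-distance to $\nu_0$, together with the $L\log L$-regularisation Cor.~\ref{c:GC}\,\ref{ccc:5}.

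Next, evaluating the $\EVI(0,\infty)$ inequality at $\sigma = \nu_{t+h}$ and $\sigma = \nu_{t-h}$ and passing to the limit $h \to 0^+$ yields the energy dissipation inequality
\begin{align*}
\frac{\diff}{\diff t}\Ent_\QP(\nu_t) \le -\frac{1}{2}|\dot\nu_t|^2 - \frac{1}{2}|{\sf D}^-_{\mssW_\E}\Ent_\QP|^2(\nu_t) \cquad \text{a.e.~} t>0 \fstop
\end{align*}
Conversely, since $\Ent_\QP$ is $\mssW_\E$-geodesically convex by Cor.~\ref{c:GC}\,\ref{ccc:2}, the descending slope $|{\sf D}^-_{\mssW_\E}\Ent_\QP|$ is a strong upper gradient, and the abstract chain rule for absolutely continuous curves yields
\begin{align*}
-\frac{\diff}{\diff t}\Ent_\QP(\nu_t) \le |\dot\nu_t|\cdot |{\sf D}^-_{\mssW_\E}\Ent_\QP|(\nu_t) \fstop
\end{align*}
Young's inequality $ab \le \frac{1}{2}a^2 + \frac{1}{2}b^2$ then forces equality throughout both displays, which is precisely the sought identity $\frac{\diff}{\diff t}\Ent_\QP(\nu_t) = -|\dot\nu_t|^2 = -|{\sf D}^-_{\mssW_\E}\Ent_\QP|^2(\nu_t)$.

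For uniqueness, I would note that any other energy-dissipation solution $(\tilde\nu_t)$ starting at $\nu_0$ is itself an EVI-flow for $\Ent_\QP$ — this being the standard reverse implication valid under the $\mssW_\E$-geodesic convexity of Cor.~\ref{c:GC}\,\ref{ccc:2} — so that the $\mssW_\E$-contraction estimate of Cor.~\ref{c:GC} applied to the pair $(\nu_t, \tilde\nu_t)$ forces $\tilde\nu_t \equiv \nu_t$. The main obstacle is technical rather than conceptual: $\mssW_\E$ is only an \emph{extended} distance and may take the value $+\infty$ on large portions of $\mathcal P_\QP(\U)$, so each of the above steps must be confined to the $\mssW_\E$-connected component of $\nu_0$, and one must verify that $\mssW_\E$-distances stay finite along the flow — a property guaranteed precisely by Cor.~\ref{t:mainc} combined with the bound $\|\rho'_t\|^2 \le \Fis(\nu_t)$ of Prop.~\ref{p:LDC}(iii).
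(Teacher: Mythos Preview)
Your proposal is correct and follows the same conceptual route as the paper: derive the energy dissipation equality from the $\EVI(0,\infty)$ of Cor.~\ref{t:mainc}, and obtain uniqueness via the contraction property of EVI flows. The difference is purely one of packaging. The paper does not re-derive the implication $\EVI \Rightarrow \text{EDE}$ or the uniqueness step by hand; it simply cites \cite[Thm.~3.15]{MurSav20} for the former and \cite[Thm.~4.2]{MurSav20} for the latter, whereas you sketch the standard proof of those results (EDI from EVI, strong-upper-gradient property of the slope under convexity, Young's inequality to close the equality, and EDE $\Rightarrow$ EVI under geodesic convexity for uniqueness). Your handling of the extended-distance issue --- restricting to the $\mssW_\E$-connected component of $\nu_0$, whose well-posedness along the flow is guaranteed by Cor.~\ref{t:mainc} --- is exactly the paper's manoeuvre as well. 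So: same approach, just with the black boxes unfolded.
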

\begin{proof}
This follows from Cor.~\ref{t:mainc} and~\cite[Thm.~3.5]{MurSav20}. The uniqueness follows from~\cite[Thm.~4.2]{MurSav20}. Note that although \cite{MurSav20} works in metric spaces (as opposed to extended metric spaces), we can apply the result to our setting by the following argument: It is easy to see that $\mssW_{\E}(\nu_t, \nu_s)<+\infty$ for every $s, t \in [0,+\infty)$ by Cor.~\ref{t:mainc} with $\sigma=\nu=\nu_0$ and the triangle inequality. Thus, we can think of $(\nu_t)$ as a curve in a (non-extended) metric space $\{\sigma \in \mathcal P(\U): \mssW_{\E}(\nu_0, \sigma)<+\infty\}$, to which we can apply the results in \cite{MurSav20}. 
\end{proof}

\section{Generalisation} \label{sec:GL}
We have been so far working in the case of $\sine_\beta$. In this section, we seek a generalisation to a broader class of probability measures on~{$\U=\U(\R)$}. As an application, we prove $\BE(0,\infty)$ in the case of the $1$-dimensional circular $(\beta,s)$-Riesz ensemble. 
In this section, we denote by $\mssm$ and $\mssm_r$ the Lebesgue measure on~$\R$ and its restriction on~$B_r=[-r,r]$ respectively, and we take the Euclidean distance~$\mssd(x, y):=|x-y|$ for $x, y \in \R$. 
Let $\mu$ be a Borel probability on~$\U$ and recall that $\mu_r^\eta$ is the projected conditional probability measure, defined right after \eqref{e:CES}, and $\mu_r^{k, \eta}=\mu_r^{\eta}\mrestr{\U^k(B_r)}$ is the restriction to~$\U^k(B_r)$. Let $\mathcal K(\mu^\eta_r) \subset \N_0$ be defined as
$$\mathcal K(\mu^\eta_r):=\{k \in \N_0: \mu_r^{k, \eta}(\U^k(B_r))>0\} \fstop$$  
Under the number rigidity~\eqref{e:R1}, we have $\#\mathcal K(\mu^\eta_r) =1$. However, we do not assume the number rigidity in the following arguments. 
Recall that the intensity measure $I_\QP$ for $\QP$ was defined in~\eqref{d:IS} and that 
the set~$\U^k_\e(B_r) \subset \U^k(B_r)$ was defined as
\begin{align*}
\U^k_\e(B_r)&:=\biggl\{\gamma =\sum_{i=1}^k\delta_{x_i} \in \U(B_r): |x_i-x_j| \ge \e, \ i, j \in \{1, \ldots, k\}\biggr\} \fstop
\end{align*}
\begin{ass}\label{a:GT}
Let $K \in \R$ and $\mu$ be a Borel probability measure on~$\U=\U(\R)$ whose intensity measure satisfies $I_\QP(C)<+\infty$ for every compact set~$C \subset \R$. Assume the following:
\begin{enumerate}[$(a)$]
\item the measure~$\mu_r^\eta$ is absolutely continuous with respect to the Poisson measure~$\pi_{\mssm_r}$, and $\mu_r^{k, \eta}$ is equivalent to $\pi_{\mssm_r}|_{\U^k(B_r)}$ for every $k \in \mathcal K(\mu^\eta_r)$, $\QP$-a.e.~$\eta$ and every $r>0$;
\item {(Conditional geodesic $K$-convexity)} the following hold:
\begin{itemize}
\item  the density~$\frac{\diff\mu_r^{k, \eta}}{\diff \pi_{\mssm_r}|_{\U^k(B_r)}}$ is continuous on $\U^k(B_r)$;
\item the logarithmic density~$\Psi_r^{k, \eta}: \U^k(B_r) \to \R \cup\{+\infty\}$ defined as 
$$\Psi_r^{k, \eta}=-\log\Bigl(\frac{\diff\mu_r^{k, \eta}}{\diff \pi_{\mssm_r} |_{\U^k(B_r)}}\Bigr)$$
 is bounded and continuous on~$\U^k_\e(B_r)$ for every $k \in  \mathcal K(\mu^\eta_r)$,  $\QP$-a.e.~$\eta$, $\e>0$ and $r>0$;
 \item $\Psi_r^{k, \eta}$ is $K$-geodesically convex with respect to~$\mssd_{\U}$ on $\U^k(B_r)$ 
  for every $k \in \mathcal K(\mu^\eta_r)$, $\QP$-a.e.~$\eta$ and every $r>0$.
  \end{itemize}
\end{enumerate}
\end{ass}
Under Assumption~\ref{a:GT}, the strongly local symmetric Dirichlet form~$(\E^{\U, \QP}, \dom{\E^{\U, \QP}})$ can be constructed in the same proof as in the case of  $\sine_\beta$ because we have not used any particular property of $K=0$. We further show the Bakry--\'Emery curvature bound~$\BE(K,\infty)$ for the form~$(\E^{\U, \QP}, \dom{\E^{\U, \QP}})$ and related functional inequalities.
\begin{thm}\label{t:GT}
Suppose that $\QP$ satisfies Assumption~\ref{a:GT}.  Then, $(\E^{\U, \QP}, \dom{\E^{\U, \QP}})$ satisfies the following:
\begin{enumerate}[$(a)$]
\item {\bf $($Bakry--\'Emery inequality $\BE(K,\infty)$$)$}
\begin{align*}
\cdc^{\U}\bigl(T_t^{\U, \mu} u\bigr) \le e^{-2Kt}T_t^{\U, \mu} \cdc^{\U}(u) \cquad u \in  \dom{\E^{\U, \mu}} \quad t \ge 0 ; 
\end{align*}
\item$(${\bf lntegral Bochner inequality}$)$ for every $(u, \phi) \in \dom{\mathbf \cdc^{\U, \QP}_2}$
\begin{align*}
\mathbf \cdc^{\U, \QP}_2(u, \phi) \ge 2K \int_{\U} \cdc^{\U}(u) \phi \diff \QP \ ;
\end{align*}
\item $(${\bf local Poincar\'e inequality}$)$  for $u \in \dom{\E^{\U, \QP}}$ and $t \ge 0$,
\begin{align*}
&T^{\U, \QP}_tu^2- (T^{\U, \QP}_tu)^2 \le \frac{1-e^{-2Kt}}{K}T^{\U, \QP}_t\cdc^{\U}(u)  \comma
\\
&T^{\U, \QP}_tu^2- (T^{\U, \QP}_tu)^2 \ge \frac{e^{{2K}t}-1}{K}\cdc^{\U} (T^{\U, \QP}_tu) \ ;
\end{align*}
\item $(${\bf  log Harnack inequality}$)$ for every non-negative $u \in L^\infty(\U, \QP)$, $\e \in (0, 1]$, $t>0$, there exists $\Omega \subset \U$ so that $\QP(\Omega)=1$ and 
$$T^{\U, \QP}_t\log (u+\e)(\gamma) \le \log (T^{\U, \QP}_tu(\eta)+\e) + \frac{K}{2(1-e^{-2Kt})}\bar{\mssd}_\U(\gamma, \eta)^2\comma \quad \text{$\gamma, \eta \in \Omega$} \ ;$$
\item $(${\bf  dimension-free Harnack inequality}$)$ for every non-negative $u \in L^\infty(\U, \QP)$, $t>0$ and $\alpha>1$ there exists $\Omega \subset \U$ so that $\QP(\Omega)=1$ and 
$$(T^{\U, \QP}_tu)^\alpha(\gamma)\le T^{\U, \QP}_tu^\alpha(\eta) \exp\Bigl\{ \frac{\alpha K}{2(\alpha-1)(1-e^{-2Kt})}\bar{\mssd}_\U(\gamma, \eta)^2\Bigr\} \comma \quad \text{$\gamma, \eta \in \Omega$} \ ;$$
\item $(${\bf  Lipschitz contraction}$)$ For $u \in \Lip_b(\U, \bar{\mssd}_\U, \QP)$ and $t>0$, 
$T_t^{\U, \QP}u$ has a $\bar{\mssd}_\U$-Lipschitz $\QP$-modification (denoted by the same symbol~$T_t^{\U, \QP}u$)
such that the following estimate holds:
$$\Lip_{\bar{\mssd}_\U}({T}_t^{\U, \QP} u) \le e^{-Kt}\Lip_{\bar{\mssd}_\U}(u) \cquad t \ge 0 \ ;$$
\item$(${\bf $L^\infty$-to-$\Lip$ regularisation}$)$ 
For $u \in L^\infty(\U, \QP)$ and $t>0$, 
$T_t^{\U, \QP}u$ has a $\bar{\mssd}_\U$-Lipschitz $\QP$-modification~(denoted by the same symbol~$T_t^{\U, \QP}u$)
such that the following estimate holds:
\begin{align*}
\Lip_{\bar{\mssd}_\U}({T}_t^{\U, \QP} u) &\le \frac{1}{\sqrt{2I_{2K}(t)} } \|u\|_{L^\infty(\QP)} \cquad  t>0 \comma
\end{align*}
where $I_K(t):=\int_0^t e^{Kr} \diff r$;
\item$(${\bf The density of Lipschitz algebra}$)$ 
$$\text{$\Lip_b(\U, \bar{\mssd}_\U, \QP)$ is dense in $\dom{\E^{\U, \QP}}$} \scolon$$
\item $(${\bf Evolutional Variation Inequality}$)$
For every $\nu, \sigma\in \dom{\Ent_\QP}$ with $\mssW_\E(\nu, \sigma)<+\infty$, the curve $t \mapsto \mathcal T^{\U, \QP}_t \sigma \in (\mathcal P(\U), \mssW_{\E})$ is locally absolutely continuous, $\Ent_\QP(\mathcal T_t^{\U, \QP}\sigma)<+\infty$, $\mssW_{\E}(\mathcal T_t^{\U, \QP}\sigma, \nu)<+\infty$ for every $t>0$, and 
\begin{align*} 
\frac{1}{2}\frac{\diff^+}{\diff t}{\mssW}_{\E}\bigl({\mathcal T^{\U, \QP}_t \sigma}, \nu \bigr)^2 +\frac{K}{2} {\mssW}_{\E}\bigl({\mathcal T^{\U, \QP}_t \sigma}, \nu \bigr)^2\le \Ent_{\mu}({\nu}) - \Ent_{\mu}({\mathcal T^{\U, \QP}_t \sigma}) \cquad  t>0\scolon
\end{align*}

\item $(${\bf Geodesic $K$-convexity}$)$ The entropy~$\Ent_\QP$ is ${\mssW}_{\E}$-convex along every ${\mssW}_{\E}$-geodesic $(\nu_t)_{t \in [0,1]}$$:$
\begin{align*}
\Ent_\QP(\nu_t) \le (1-t)\Ent_{\QP}(\nu_0) + t \Ent_{\QP}(\nu_1) - \frac{K}{2}t(1-t)\mssW_{\E}(\nu_0, \nu_1)^2 \cquad t \in [0,1] \scolon
\end{align*}
\item $(${\bf Wasserstein contraction}$)$
\begin{align*}
\mssW_{\E}\bigl(\mathcal T_t^{\U, \QP}\nu, \mathcal T_t^{\U, \QP}\sigma\bigr) \le e^{-Kt}\mssW_{\E}(\nu, \sigma)  \cquad t \ge 0 \cquad \nu, \sigma \in \mathcal P_\QP(\U) \scolon
\end{align*}

\item $(${\bf $L\log L$-regularisation}$)$
For every~$\nu=\rho\cdot \QP \in \mathcal P_\QP(\U)$ $($not necessarily in~$\dom{\Ent_\QP}$$)$ and $\sigma \in \dom{\Ent_{\QP}}$, 
$$\Ent_{\QP}(\mathcal T_t^{\U, \QP}\nu) \le \Ent_{\QP}(\sigma) +\frac{K}{e^{2Kt}-1}{\mssW}_{\E}(\nu, \sigma)^2 \cquad t>0\scolon$$

\item $(${\bf Gradient flow}$)$
The dual flow~$\bigl\{\mathcal T_t^{\U, \QP}\bigr\}_{t >0}$ is the unique solution to the $\mssW_{\E}$-gradient flow of $\Ent_{\QP}$.  Namely, for every $\nu_0 \in \dom{\Ent_\QP}$, the curve $[0, +\infty) \ni t \mapsto \nu_t=\mathcal T_t^{\U, \QP} \nu_0\in \dom{\Ent_\QP}$ is the unique solution to the energy equality starting at $\nu_0$:
\begin{align}
\frac{\diff}{\diff t} \Ent_\QP({\nu_t}) = -|\dot\nu_t|^2 = -|{\sf D}^-_{\mssW_{\E}} \Ent_\QP|^2(\nu_t) \cquad\text{a.e.~$t>0$} \fstop
\end{align}
\end{enumerate}
Furthermore, if the form $(\E^{\U, \QP}, \dom{\E^{\U, \QP}})$ is quasi-regular, the following hold:
\begin{enumerate}[(a)]  \setcounter{enumi}{13}
\item$(${\bf Exponential integrability of $1$-Lipschitz functions}$)$
 If $u$ is a $\bar{\mssd}_\U$-Lipschitz function with $\Lip_{\bar{\mssd}_\U}(u) \le 1$ and $|u(\gamma)|<+\infty$ $\QP$-a.e.~$\gamma$, then, for every $s<\sqrt{\frac{8K}{1-e^{-2Kt}}}$
$$\int_{\U} e^{s u(\eta)} P_t^{\U, \QP}(\gamma, \diff \eta)<\infty \scolon$$
\item $(${\bf$p$-Bakry-\'Emery estimate}$)$
The form $(\E^{\U, \QP}, \dom{\E^{\U, \QP}})$ satisfies $\BE_p(K,\infty)$ for every $1 \le p <\infty$:
 $$\cdc^{\U}(T_t^{\U, \QP}u)^{\frac{p}{2}} \le e^{-pKt}T_t^{\U, \QP}\bigl(\cdc^{\U}(u)^{\frac{p}{2}}\bigr) \cquad u \in \dom{\E^{\U, \QP}} \quad t \ge 0\scolon$$
 \item $(${\bf local log-Sobolev inequality}$)$
 For every non-negative $u \in \dom{\E^{\U, \QP}}$, $t \ge 0$,
\begin{align*}
&T^{\U, \QP}_t(u\log u)- T^{\U, \QP}_tu\log T^{\U, \QP}_t u \le \frac{1-e^{-2Kt}}{2K}  T^{\U, \QP}_t\biggl( \frac{\cdc^{\U}(u)}{u} \biggr) \comma
\\
&T^{\U, \QP}_t(u\log u)- T^{\U, \QP}_tu\log T^{\U, \QP}_t u \ge \frac{e^{2Kt}-1}{2K} \frac{\cdc^{\U}(T^{\U, \QP}_t u)}{T^{\U, \QP}_t u}  \scolon
\end{align*}
 \item $(${\bf local hyper-contractivity}$)$
 For every $t>0$, $0<s \le t$, and $1 < p <q<\infty$ so that 
 $$\frac{q-1}{p-1}=\frac{e^{2Kt}-1}{e^{2Ks}-1} \comma$$
 it holds that 
 $$\Bigl(T_s^{\U, \QP}(T_{t-s}^{\U, \QP}u)^q\Bigr)^{1/q} \le \Bigl(T_t^{\U, \QP}u^p\Bigr)^{1/p} \cquad u \ge 0 \fstop$$
 When $K=0$, $\frac{e^{2Kt}-1}{e^{2Ks}-1}$ is conventionally replaced by $\frac{t}{s}$. 
 \end{enumerate}

\end{thm}

\begin{proof}
Thanks to Assumption~\ref{a:GT}, the space~$(\U^k(B_r), \mssd_\U, \QP_r^{k, \eta})$ satisfies $\RCD(K,\infty)$ for every $k \in \mathcal K(\mu_r^\eta)$ as in the same proof of Prop.~\ref{p:BE2}. 
The rest of the proofs in Sections \ref{sec:CI}, \ref{sec:LH} and \ref{sec:GF} work exactly in the same way up to the multiplicative constants (e.g., $e^{-2Kt}$ instead of $1$ for the $\BE(K,\infty)$ inequality).
\end{proof}

\begin{rem}[Finite intensity]
We impose the condition $I_\QP(C)<+\infty$ for every compact set $C \subset \R$ to have the non-triviality of $(\E^{\U, \QP}, \dom{\E^{\U, \QP}})$, see Rem.~\ref{r:CLF2}. 
\end{rem}
\begin{rem}[Number rigidity]
Under the number rigidity~\eqref{e:R1}, we have $\#\mathcal K(\mu^\eta_r) =1$. However, this has not been essentially used for the proofs in the case of~$\sine_\beta$. Indeed,  in the arguments in Section~\ref{sec:CI} and~\ref{sec:LH}  involving the semigroup $T_t^{\U(B_r), \QP_r^\eta}$, we just need to observe that the $k$-particle space~$\U^k(B_r)$ is an invariant set of the semigroup~$T_t^{\U(B_r), \QP_r^\eta}$ for every $k$, i.e., 
\begin{align}\label{d:IV}
T_t^{\U(B_r), \QP_r^\eta}u\1_{\U^k(B_r)} = \1_{\U^k(B_r)}T_t^{\U(B_r), \QP_r^\eta}u \comma \quad u \in L^2(\U(B_r), \QP_r^\eta)\fstop
\end{align}
The equality~\eqref{d:IV} easily follows from \cite[Thm.~1.6.1]{FukOshTak11} and the fact 
$$\1_{\U^k(B_r)} \in \dom{\E^{\U(B_r), \QP_r^\eta}} \cquad \E^{\U(B_r), \QP_r^\eta}(\1_{\U^k(B_r)})=0 \cquad k \in \N_0 \fstop$$
From the probabilistic viewpoint, this corresponds to the fact that the number of particles in the corresponding diffusion in $B_r$ is preserved under the time evolution due to the reflecting boundary condition at the boundary $\partial B_r$, which is derived from the choice of the domain $\dom{\E^{\U(B_r), \QP_r^\eta}}$).
Thus, we may think of $\U(B_r)$ as the disjoint union $\sqcup_{k \in \mathcal K(\mu^\eta_r)}\U^{k}(B_r)$ regarding the semigroup action. 
Hence, by applying the same proofs as in~Section~\ref{sec:CI} and \ref{sec:LH} to each~$k \in \mathcal K(\mu^\eta_r)$ (instead of using the particular $k=k(\eta)$ selected by $\eta$), Thm.~\ref{t:GT} can be proven without the number rigidity~\eqref{e:R1}. 
\end{rem}
\begin{rem}
It is open whether there exists a Borel probability measure $\QP$ on $\U$ such that Assumption~\ref{a:GT} holds with $K>0$ and $\QP(\U^\infty)=1$.
\end{rem}

\subsection{$1$-dimensional $(\beta, s)$-circular Riesz ensemble} \label{ss:RE}
In this section, we apply Thm.~\ref{t:GT} to prove $\BE(0,\infty)$ in the case of the law~$\QP=\QP_{\beta, s}$ of the $(\beta, s)$-circular Riesz ensemblefor every~$\beta>0$ and $s \in (0,1)$ on~$\U(\R)$. 
Let $g(x)=|x|^{-s}$ with $s \in (0, 1)$ for $x \in \R$. Define 
\begin{align*}
&H^k_r(\gamma):=\sum_{i<j}^kg(x_i-x_j) \comma \quad M_{r, R}^{k, \eta}(\gamma, \eta):=\sum_{i=1}^k\sum_{y \in \eta_{B_r^c},\ |y| \le R}\bigl(g(x_i-y)-g(y)\bigr) \comma
\\
&\Psi_{r, R}^{k, \eta}(\gamma):= \beta \Bigl(H^k_r(\gamma)+M_{r, R}^{k, \eta}(\gamma, \eta)\Bigr)  \quad \text{for $\gamma=\sum_{i=1}^k \delta_{x_i} \in \U^k(B_r)$ and $\eta \in \U(\R)$} \fstop
\end{align*}

\begin{prop} \label{p:conv2}
$\Psi_{r, R}^{k, \eta}$ is geodesically convex in $(\U^{k}(B_r), \mssd_{\U})$ for any $0<r<R<\infty$, $k \in \N$, $\eta \in \U(B_r^c)$ and $\beta>0$.
\end{prop}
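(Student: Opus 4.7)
The plan is to follow the strategy of Proposition~\ref{p:conv}, replacing the logarithmic pair potential by the Riesz potential $g(x)=|x|^{-s}$ with $s\in(0,1)$. Since $\Psi_{r,R}^{k,\eta}$ is, up to the additive constant $-\beta k\sum_{y} g(y)$ which plays no role in convexity, a nonnegative linear combination of terms of the form $g(x_i-x_j)$ for $i<j$ and $g(x_i-y)$ for $y\in\eta_{B_r^c}$ with $|y|\le R$, it will suffice to check that each such pair term, viewed as a function of $(x_1,\dots,x_k)\in B_r^{\times k}$, is convex on every open Weyl chamber
\begin{align*}
W_\sigma:=\bigl\{(x_1,\dots,x_k)\in B_r^{\times k} : x_{\sigma(1)}<\cdots<x_{\sigma(k)}\bigr\}\comma \sigma\in\mathfrak S_k\fstop
\end{align*}

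Inside any $W_\sigma$ each difference $x_i-x_j$ has a fixed nonzero sign and each $x_i-y$ stays bounded away from $0$ for $y\in B_r^c$, so the one-variable profile $h(t):=|t|^{-s}$ is applied to arguments in $\R\setminus\{0\}$, where it is smooth and satisfies $h''(t)=s(s+1)|t|^{-s-2}$. Let $H_{ij}$ and $H_i^y$ denote, as in Proposition~\ref{p:conv}, the Hessian matrices of $(x_1,\dots,x_k)\mapsto g(x_i-x_j)$ and $(x_1,\dots,x_k)\mapsto g(x_i-y)$, respectively. A direct chain-rule computation yields, for every $\mathbf v=(v_1,\dots,v_k)\in\R^k$,
\begin{align*}
\mathbf v H_{ij} \mathbf v^t = s(s+1)|x_i-x_j|^{-s-2}(v_i-v_j)^2\comma \quad \mathbf v H^y_i \mathbf v^t = s(s+1)|x_i-y|^{-s-2}v_i^2\fstop
\end{align*}
Since $s(s+1)>0$ for $s\in(0,1)$, both quadratic forms are nonnegative throughout $W_\sigma$, hence $H_{ij}$ and $H_i^y$ are positive semi-definite there.

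Summing over $i<j$ and over $y\in\eta_{B_r^c}$ with $|y|\le R$ gives convexity of $\Psi_{r,R}^{k,\eta}$ on every Weyl chamber of $B_r^{\times k}$. The final step, transferring this chamberwise convexity to geodesical convexity on $(\U^k(B_r),\mssd_\U)$, will be identical to the closing paragraph of the proof of Proposition~\ref{p:conv}: invoking the isomorphism~\eqref{e:STS}, the $\mathfrak S_k$-symmetry of $\Psi_{r,R}^{k,\eta}$, and the fact that constant-speed $\mssd_\U$-geodesics between two $k$-point configurations on $\R$ are given by the monotone rearrangement and therefore remain inside a single Weyl chamber.

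I do not anticipate any substantive obstacle: the argument is a direct transcription of Proposition~\ref{p:conv}, with the positivity of $h''$ replacing the positivity of the Hessian of $-\log|\cdot|$. The only minor points to record are that the constant term $-g(y)$ appearing in $M_{r,R}^{k,\eta}$ has vanishing Hessian and so does not affect convexity, and that the chamberwise analysis applies verbatim because $g$ is smooth off the diagonal and every $\mssd_\U$-geodesic between two distinct configurations avoids it.
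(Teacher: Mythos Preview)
Your proposal is correct and follows essentially the same approach as the paper: the paper's proof displays the identical Hessian quadratic forms $\mathbf v H_{ij}\mathbf v^t = s(s+1)|x_i-x_j|^{-s-2}(v_i-v_j)^2$ and $\mathbf v H_i^y\mathbf v^t = s(s+1)|x_i-y|^{-s-2}v_i^2$, then simply states that the same proof works as in Proposition~\ref{p:conv}. Your write-up is more explicit about the chamberwise analysis and the irrelevance of the additive constant $-g(y)$, but the substance is the same.
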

\begin{proof}
Let $H_{ij}, H_i^y$ be the Hessian matrices of the functions $(x_1, \ldots, x_k) \mapsto g(x_i-x_j)$ and~$(x_1, \ldots, x_k) \mapsto g(x_i-y)-g(y)$ respectively. 
By observing
\begin{align} \label{e:HCP2}
&\mathbf v H_{ij} \mathbf v^t = \frac{s(s+1)(v_i-v_j)^2}{|x_i-x_j|^{s+2}}, \quad  \mathbf v H^y_{i} \mathbf v^t = \frac{ s(s+1)v_i^2}{|y-x_i|^{s+2}} \comma
\\
& \mathbf v=(v_1, \ldots, v_k) \in \R^{k} \comma \notag
\end{align}
the same proof works as in Prop.~\ref{p:conv}. 
\end{proof}
The following result is due to Dereudre--Vasseur~\cite{DerVas21}.

\begin{thm}[{\cite[Lem.~1.7, Thm.~1.8]{DerVas21}}]\label{t:BRG}
There exists a translation-invariant Borel probability measure $\QP=\QP_{\beta, s}$ whose intensity measure~$I_\QP$ is the Lebesgue measure  such  that the pointwise limit $\Phi_{r}^{k, \eta}(\gamma)=\lim_{R\to \infty}\Phi_{r, R}^{k, \eta}(\gamma)$ exists for every $\gamma \in \U(B_r)$ and $\mu$-a.e.~$\eta$. Furthermore, the following DLR equation holds:
\begin{align} \label{d:cp2}
 \diff \mu_{r}^{k, \eta}=\frac{e^{-\Psi^{k, \eta}_{r}}}{Z_{r}^\eta} \diff \mssm_r^{\odot k}  \comma \quad k \in \mathcal K(\QP_r^\eta) \comma
\end{align}
where  $Z_{r}^\eta:=\sum_{k \in  \mathcal K(\QP_r^\eta)} \int_{\U(B_r)} e^{-\Psi^{k, \eta}_{r}} \diff \mssm_r^{\odot k}$ is the normalisation constant. 
\end{thm}
\begin{rem}
The probability measure $\QP_{\beta, s}$ was constructed as a subsequencial limit of the finite-volume Gibbs measures. The uniqueness of the limit points seems still open, and any translation-invariant limit point is currently called {\it the law of the $(\beta,s)$-circular Riesz gas (or ensemble)}, see e.g., \cite[Prop.~1.5]{DerVas21} for more details. 
\end{rem}
\begin{cor} 
The probability measure~$\QP=\QP_{\beta, s}$ satisfies Assumption~\ref{a:GT} for every~$\beta>0$ and $s \in (0,1)$.
\end{cor}
\begin{proof}
As $I_\QP$ is the Lebesgue measure,  it is obvious that $I_\QP(C)<+\infty$ for every compact set~$C \subset \R$. 
The condition~(a) in~Assumption~\ref{a:GT} follows from Thm.~\ref{t:BRG}. 
The geodesic convexity in~(b) follows from Prop.~\ref{p:conv2} and the pointwise convergence $\Phi_r^{k, \eta}(\gamma) = \lim_{R \to \infty}\Phi_{r, R}^{k, \eta}(\gamma)$ for every $\gamma \in \U(B_r)$ in Thm.~\ref{t:BRG}.  
It suffices to verify that 
\begin{align}\label{g:GM}
\U^k(B_r) \ni \gamma \mapsto e^{-\Psi^{k, \eta}_{r}(\gamma)} \quad \text{is continuous}
\end{align}
for every~$k \in \mathcal K(\mu^\eta_r)$, $\QP$-a.e.~$\eta$, $r>0$; and 
\begin{align}\label{g:GM1}
\U_\e^k(B_r) \ni \gamma \mapsto \Psi^{k, \eta}_{r}(\gamma)\quad \text{is bounded and continuous}
\end{align}
for every~$k \in \mathcal K(\mu^\eta_r)$, $\QP$-a.e.~$\eta$, $r>0$ and $\e>0$.
Thanks to \cite[Lem.~1.7]{DerVas21} (note that the roles of $\gamma$ and $\eta$ there are opposite to this paper), the following pointwise limit exists for every $\gamma \in \U(B_r)$ and $\QP$-a.e.~$\eta$
\begin{align} \label{e:MF}
M_{r}^{k, \eta}(\gamma, \eta):=\lim_{R \to \infty}M_{r, R}^{k, \eta}(\gamma, \eta) <+\infty \comma \quad k \in \mathcal K(\QP_r^\eta)\comma 
\end{align}
and $\Psi^{k, \eta}_{r}(\gamma, \eta)$ can be written as
$$\Psi^{k, \eta}_{r}(\gamma, \eta)=\lim_{R \to \infty}\Psi^{k, \eta}_{r, R}(\gamma, \eta)=\beta \Bigl(H^k_r(\gamma)+M_{r}^{k, \eta}(\gamma, \eta)\Bigr) \fstop$$
Furthermore, the convergence~\eqref{e:MF} is uniform in $\gamma$ (see \cite[the proof of Lem.~1.7 on p.1047]{DerVas21}. Note that the roles of $\gamma$ and $\eta$ are opposite).
Thus, $\gamma \mapsto M_{r}^{k, \eta}(\gamma, \eta)$ is continuous in $\U(B_r)$ for $\QP$-a.e.~$\eta$, which implies \eqref{g:GM} and \eqref{g:GM1}. 
\end{proof}

\begin{cor}\label{c:BRE}
The Dirichlet form~$(\E^{\U, \QP}, \dom{\E^{\U, \QP}})$ defined in~\eqref{eq:Temptation} with $\QP=\QP_{\beta, s}$ satisfies  (a)--(m) in Thm.~\ref{t:GT} with $K=0$  for every $\beta>0$ and $s \in (0,1)$. 
\end{cor}

\begin{appendix}
\section{}
Let~$(X, \tau)$ be a locally compact Polish space and $\mssm$ be a Radon measure on $(X, \tau)$. Let $\mathcal B_b(X)^\mssm$ denote the space of real-valued bounded $\mathscr B(X)^\mssm$-measurable functions. For~$\rep f$, $\rep g$ in $\mathcal B_b(X)^\mssm$, denote by~$f=[\rep f]$,~$g=[\rep g]$, the corresponding $\mssm$-classes.
For~$\rep\ell\colon \mathcal B_b(X)^\mssm\rar \mathcal B_b(X)^\mssm$, define the following properties
\begin{enumerate*}[$(a)$]
\item\label{i:d:Liftings:1} $\tclass{\rep\ell(\rep f)}=f$; 
\item\label{i:d:Liftings:2} if $f=g$, then~$\rep\ell(\rep f)=\rep\ell(\rep g)$;
\item\label{i:d:Liftings:3} $\rep\ell(\car)=\car$;
\item\label{i:d:Liftings:4} if~$\rep f\geq 0$, then~$\rep\ell(\rep f)\geq 0$;
\item\label{i:d:Liftings:5} $\rep\ell(a\, \rep f+b\, \rep g)=a\, \rep\ell(\rep f)+b\, \rep\ell(\rep g)$ for~$a,b\in \R$;
\item\label{i:d:Liftings:6} $\rep\ell(\rep f \rep g)=\rep\ell(\rep f)\,\rep\ell(\rep g)$;
\item\label{i:d:Liftings:7} $\rep\ell(\phi)=\phi$ for~$\phi\in C_b(X)$.
\end{enumerate*}
\begin{defs}[Liftings]\label{d:Liftings}
A \emph{linear lifting} is a map~$\rep\ell\colon\mathcal B_b(X)^\mssm\rar \mathcal B_b(X)^\mssm$
satisfying~\iref{i:d:Liftings:1}--\iref{i:d:Liftings:5}.
Any such~$\rep\ell$ is a (\emph{multiplicative}) \emph{lifting} if it satisfies~\iref{i:d:Liftings:1}--\iref{i:d:Liftings:6}. 
Finally, it is a \emph{strong lifting} if it satisfies~\iref{i:d:Liftings:1}--\iref{i:d:Liftings:7}.
A \emph{Borel} (linear) \emph{lifting} is a (linear) lifting with~$\rep\ell\tparen{\mathcal B_b(X)^\mssm}\subset \mathcal B_b(X)$.
Thanks to~\iref{i:d:Liftings:2}, a linear lifting~$\rep\ell\colon \mathcal B_b(X)^\mssm \rar \mathcal B_b(X)^\mssm$ descends to a linear order-preserving inverse~$\ell\colon L^\infty(\mssm)\rar\mathcal B_b(X)^\mssm$ of the quotient map~$\class[\mssm]{\emparg}\colon\mathcal B_b(X)^\mssm\rar L^\infty(\mssm)$.
Conventionally, by a (\emph{linear}/\emph{multiplicative}/\emph{strong}/\emph{Borel}) \emph{lifting} we shall mean without distinction either~$\rep\ell$ or~$\ell$ as above.
\end{defs}

\begin{thm}[e.g.,~{\cite[Thm.~4.12]{StrMacMus02}}]
$(X, \tau, \mssm)$ admits a strong Borel lifting.
\end{thm}

 Let $\U=\U(\R)$ and recall that for $\eta \in \U$, we set $\U_r^\eta:=\{\gamma \in \U: \gamma_{B_r^c}=\eta_{B_r^c}\}$.
\begin{lem}[disintegration lemma] \label{l:sp}
Assume that there exists a measurable set $\Xi \subset \U$ with $\QP(\Xi)=1$ so that for every $\eta \in \Xi$, there exists a family of measurable sets $\Omega^\eta \subset \U(B_r)$ so that $\mu_{r}^\eta(\Omega^\eta)=1$ for every $\eta \in \Xi$. Let $\Omega \subset \U$ be the (not necessarily measurable) subset defined by
$$\Omega:=\bigcup_{\eta \in \Xi} \pr_{r}^{-1}(\Omega^\eta) \cap \U_{r}^{\eta} \fstop$$ 
Assume further that there exists a measurable set $\Theta \subset \U$ so that $\Omega \subset \Theta$. Then, $\QP(\Theta)=1$. 
\end{lem}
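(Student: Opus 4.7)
The plan is to apply the disintegration formula \eqref{p:ConditionalIntegration2} to the measurable set $\Theta$, namely
\begin{align*}
\QP(\Theta) = \int_\U \QP_r^\eta\bigl(\Theta_r^\eta\bigr) \diff \QP(\eta),
\end{align*}
where $\Theta_r^\eta = \{\gamma \in \U(B_r) : \gamma + \eta_{B_r^c} \in \Theta\}$ is the section of $\Theta$ defined in \eqref{e:SEF2}. Since $\Theta$ is Borel and the map $\gamma \mapsto \gamma + \eta_{B_r^c}$ is $\tau_\mrmv$-continuous from $\U(B_r)$ to $\U$, the set $\Theta_r^\eta$ is Borel in $\U(B_r)$ and in particular $\QP_r^\eta$-measurable. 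It therefore suffices to prove that $\QP_r^\eta(\Theta_r^\eta) = 1$ for every $\eta \in \Xi$, since $\QP(\Xi) = 1$.

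The heart of the argument is the inclusion $\Omega^\eta \subset \Theta_r^\eta$ for every $\eta \in \Xi$. Fix such an $\eta$ and let $\gamma \in \Omega^\eta \subset \U(B_r)$. Set $\tilde\gamma := \gamma + \eta_{B_r^c} \in \U$. By construction, $\tilde\gamma_{B_r^c} = \eta_{B_r^c}$, so $\tilde\gamma \in \U_r^\eta$. Moreover, $\pr_r(\tilde\gamma) = \gamma \in \Omega^\eta$, so $\tilde\gamma \in \pr_r^{-1}(\Omega^\eta)$. Combining these,
\begin{align*}
\tilde\gamma \in \pr_r^{-1}(\Omega^\eta) \cap \U_r^\eta \subset \Omega \subset \Theta,
\end{align*}
using the definition of $\Omega$ (with the particular choice $\eta \in \Xi$ in the union) and the hypothesis $\Omega \subset \Theta$. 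This means $\gamma \in \Theta_r^\eta$, so $\Omega^\eta \subset \Theta_r^\eta$ as claimed.

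By the hypothesis $\QP_r^\eta(\Omega^\eta) = 1$, the monotonicity of $\QP_r^\eta$ then yields $\QP_r^\eta(\Theta_r^\eta) = 1$ for all $\eta \in \Xi$. Integrating via the disintegration formula and using $\QP(\Xi) = 1$ gives $\QP(\Theta) = 1$, which completes the proof. The main (mild) subtlety is that $\Omega$ itself is not assumed measurable, so one cannot apply the disintegration formula to $\Omega$ directly; instead one applies it to the measurable envelope $\Theta$, and the role of $\Omega$ is simply to force each section $\Theta_r^\eta$ to contain the full-measure set $\Omega^\eta$.
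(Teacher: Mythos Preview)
Your proof is correct and follows essentially the same approach as the paper: apply the disintegration formula to $\Theta$, establish the inclusion $\Omega^\eta \subset \Theta_r^\eta$ for $\eta \in \Xi$, and conclude $\QP_r^\eta(\Theta_r^\eta)=1$. You spell out the inclusion and the measurability of $\Theta_r^\eta$ in more detail than the paper, which simply asserts $\Omega^\eta \subset \Theta_r^\eta$ ``by assumption''.
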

\paragraph{Caveat} As the set $\Omega$ is defined as {\it uncountable union} of measurable sets, the measurability of~$\Omega$ is not necessarily true in general. The disintegration formula~\eqref{p:ConditionalIntegration2} is, therefore, not necessarily applicable directly to $\Omega$,  which motivates the aforementioned lemma. 
\begin{proof}[Proof of Lem.\ \ref{l:sp}]
Let $\Theta_{r}^{\eta}=\{\gamma \in \U(B_r): \gamma+\eta_{B_r^c} \in \Theta\}$ be a section of~$\Theta$ at~$\eta_{B_r^c}$ as in \eqref{e:SEF2}. Then, $\Omega^{\eta} \subset \Theta_{r}^{\eta}$ by assumption. Thus, $\mu_r^\eta(\Theta_{r}^{\eta}) \ge \mu_r^\eta(\Omega^{\eta})\ge 1$. By the disintegration formula in~\eqref{p:ConditionalIntegration2}, we have that 
\begin{align*}
\mu(\Theta)= \int_{\U} \mu_r^\eta(\Theta_{r}^{\eta}) \diff \mu(\eta)  \ge 1\fstop
\end{align*}
The proof is completed. 
\end{proof}

  \begin{lem}\label{l:FL}
  Let~$\QP$ be a Borel probability on~$\U$ and $\Omega \subset \U$ be a $\QP$-measurable set with~$\QP(\Omega)=1$. Then, there exists $\Omega'\subset \Omega$ with $\QP(\Omega')=1$ and 
  \begin{align}\label{e:FL}
  \QP_r^\eta(\Omega_r^\eta)=1 \cquad \eta \in \Omega' \fstop
  \end{align}
  \end{lem}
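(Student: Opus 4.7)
The statement is a standard consequence of the disintegration identity \eqref{p:ConditionalIntegration2}, which states that for any measurable $A \subset \dUpsilon$,
$$\QP(A) = \int_\dUpsilon \QP_r^\eta(A_r^\eta)\, \diff\QP(\eta).$$
The plan is to apply this to $A = \Omega$, observe that the integrand is bounded between $0$ and $1$, and use the fact that a $[0,1]$-valued measurable function whose integral against a probability measure equals $1$ must equal $1$ almost everywhere.

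First I would verify the measurability of the map $\eta \mapsto \QP_r^\eta(\Omega_r^\eta)$; this is exactly what is needed to apply the disintegration formula, and it is part of the regular conditional probability construction underlying~\eqref{d:CPB}, as already invoked in~\eqref{p:ConditionalIntegration2}. Next, applying the disintegration formula to $\Omega$ gives
$$1 = \QP(\Omega) = \int_\dUpsilon \QP_r^\eta(\Omega_r^\eta)\, \diff\QP(\eta).$$
Since $\QP_r^\eta(\Omega_r^\eta) \le 1$ pointwise, this forces
$$\Omega'' := \bigl\{\eta \in \dUpsilon : \QP_r^\eta(\Omega_r^\eta) = 1\bigr\}$$
to satisfy $\QP(\Omega'') = 1$.

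Finally, I would set $\Omega' := \Omega \cap \Omega''$. Then $\QP(\Omega') = 1$ (as the intersection of two sets of full $\QP$-measure), $\Omega' \subset \Omega$, and by construction $\QP_r^\eta(\Omega_r^\eta) = 1$ for every $\eta \in \Omega'$, completing the proof. No substantial obstacle is expected here; the only subtlety is the measurability of $\eta \mapsto \QP_r^\eta(\Omega_r^\eta)$, which is a standard property of regular conditional probabilities and is implicit in the very formulation of~\eqref{p:ConditionalIntegration2}.
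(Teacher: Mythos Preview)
Your proposal is correct and follows essentially the same approach as the paper: apply the disintegration formula~\eqref{p:ConditionalIntegration2} to $\Omega$, use that the integrand is $[0,1]$-valued with integral $1$ to conclude it equals $1$ for $\QP$-a.e.\ $\eta$, and intersect with $\Omega$ to obtain $\Omega'$. The paper's proof is terser (it writes the disintegration identity and says the statement is ``readily concluded''), but the content is identical.
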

  \begin{proof}
  By the disintegration formula~\eqref{p:ConditionalIntegration2}, 
  $$1=\QP(\Omega)=\int_{\U} \QP_r^\eta(\Omega_r^\eta) \diff \QP(\eta) =\int_{\Omega} \QP_r^\eta(\Omega_r^\eta) \diff \QP(\eta) \comma$$
  by which the statement is readily concluded.
  \end{proof}
  
  \begin{lem} \label{l:MU}
Let $(Q, \dom{Q})$ be a {symmetric} closed form on a separable Hilbert space~$H$ endowed with the norm~$\|\cdot\|_H$. Let $\{T_t\}_{t>0}$ and $(A, \dom{A})$ be the corresponding semigroup and infinitesimal generator respectively. Suppose that there exists an algebra $\mathcal C \subset \dom{Q}$ so that $\mathcal C \subset H$ is dense and $T_t \mathcal C \subset \mathcal C$ for every $t>0$.
Then $\mathcal C$ is dense in~$\dom{Q}$.
\end{lem}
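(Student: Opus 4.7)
\textbf{Proof plan for Lemma \ref{l:MU}.} The plan is to use the smoothing effect of the semigroup $\{T_t\}$ associated with the closed symmetric form $(Q,\dom{Q})$, together with the invariance $T_t\mathcal C\subset \mathcal C$ and the $H$-density of $\mathcal C$, to reach the graph norm $\|\cdot\|_{\dom Q}^2=Q(\cdot)+\|\cdot\|_H^2$. The algebra structure of $\mathcal C$ will play no role; only the three listed properties will be used.

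The first step is to record a quantitative smoothing estimate. Since $(Q,\dom{Q})$ is a closed symmetric form with non-negative self-adjoint generator $-A$, the spectral theorem yields
\begin{equation*}
Q(T_tv) = \int_0^\infty \lambda\,e^{-2t\lambda}\,\diff\|E_\lambda v\|_H^2 \le \sup_{\lambda\ge 0}\lambda e^{-2t\lambda}\,\|v\|_H^2 = \frac{1}{2et}\,\|v\|_H^2\cquad t>0\cquad v\in H\fstop
\end{equation*}
Hence $T_t\colon H\to \dom{Q}$ is bounded for every $t>0$. In particular, if $v_n\to v$ in $H$ then $T_tv_n\to T_tv$ both in $H$ (by sub-Markovian contraction) and in $Q$-norm (by the bound above applied to $v_n-v$), so $T_tv_n\to T_tv$ in $\dom{Q}$.

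The second step is strong $\dom{Q}$-continuity of $T_t$ at $t=0^+$ along elements of $\dom{Q}$: for $u\in\dom{Q}$ one has $A^{1/2}u\in H$, and
\begin{equation*}
Q(T_tu-u) = \bigl\|(e^{-tA}-I)A^{1/2}u\bigr\|_H^2 \xrightarrow{t\to 0^+} 0\comma
\end{equation*}
by strong continuity of $\{T_t\}$ applied to $A^{1/2}u\in H$; combined with $\|T_tu-u\|_H\to 0$ this gives $T_tu\to u$ in $\dom{Q}$.

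Combining the two steps gives the density, which is the main (and easy) step. Fix $u\in\dom{Q}$ and $\e>0$. By Step 2, pick $t>0$ with $\|T_tu-u\|_{\dom{Q}}<\e/2$. Since $\mathcal C$ is dense in $H$, pick $u_n\in\mathcal C$ with $u_n\to u$ in $H$. By Step 1, $T_tu_n\to T_tu$ in $\dom{Q}$, so for $n$ large enough $\|T_tu_n-T_tu\|_{\dom{Q}}<\e/2$. By invariance $T_tu_n\in \mathcal C$, and the triangle inequality gives $\|T_tu_n-u\|_{\dom{Q}}<\e$. Hence $\mathcal C$ is $\dom{Q}$-dense, completing the proof. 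No step is a genuine obstacle here: the whole argument reduces to the spectral calculus bound $\sup_{\lambda\ge 0}\lambda e^{-2t\lambda}=(2et)^{-1}$, which is the only quantitative ingredient needed.
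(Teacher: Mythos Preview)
Your proof is correct. The two-step argument---the spectral smoothing bound $Q(T_tv)\le (2et)^{-1}\|v\|_H^2$ together with the $\dom{Q}$-continuity of $t\mapsto T_tu$ at $t=0^+$---is exactly what is needed, and the combination in Step~3 is clean. You are also right that the algebra structure of~$\mathcal C$ is never used. One minor notational slip: since $-A$ is the non-negative operator, what you write as $A^{1/2}u$ should be $(-A)^{1/2}u$, and $e^{-tA}$ should be $T_t=e^{tA}$; but the underlying computation is unaffected.

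The paper takes a different route. It first observes $T_t(\mathcal C\cap\dom{A})\subset \mathcal C\cap\dom{A}$ and then invokes the Reed--Simon core criterion (\cite[Thm.~X.49]{ReeSim75}) to conclude that $\mathcal C\cap\dom{A}$ is a core for~$A$ in the graph norm; density in~$\dom{Q}$ then follows from the inequality $Q(u)\le\|Au\|_H\|u\|_H$ and the standard chain $\dom{A}\subset\dom{Q}$ dense. Your argument is more self-contained: it avoids the external core theorem and the detour through~$\dom{A}$, working directly at the level of the form via an explicit spectral estimate. The paper's approach, on the other hand, yields the slightly stronger intermediate conclusion that $(A,\mathcal C\cap\dom{A})$ is essentially self-adjoint, which is not needed here but could be of independent interest.
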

\begin{proof}
The inclusion~$T_{t}\dom{A} \subset \dom{A}$ generally holds  for semigroups associated with {symmetric} closed forms. Thus,  combining it with the hypothesis $T_t \mathcal C \subset \mathcal C$, 
$$ T_{t}\bigl(\mathcal C \cap \dom{A}\bigr) \subset \mathcal C \cap \dom{A}\fstop$$
 Thus, by \cite[Thm.~X.49]{ReeSim75}, $\mathcal C \cap \dom{A}$ is dense with respect to the graph norm~$\|\cdot\|_{\dom{A}}$ defined as $\|\cdot\|_{\dom{A}}^2:=\|A \cdot\|_{H}+\|\cdot\|_{H}^2$ in the space $\bigl(A, \dom{A}\bigr)$. Namely, 
 \begin{align*} 
 \text{$\bigl(A, \mathcal C \cap \dom{A}\bigr)$ is essentially self-adjoint} \fstop
 \end{align*}
This implies the density $\mathcal C \subset \dom{Q}$.
Indeed, by taking $u_n \in \mathcal C\cap \dom{A}$ converging to $u \in \dom{A}$ with respect to the graph norm,  a simple integration-by-parts 
 $$Q(u,u) = (-A u, u)_{H} \le \|A u\|_{H}\|u\|_{H}$$
implies that $u_n$ converges to $u$ in the space $\dom{Q}$ endowed with the form norm~$\|\cdot\|_{\dom{Q}}$ defined in \eqref{d:FN}. 
In view of the density of $\dom{A} \subset \dom{Q}$, which is a general fact for {symmetric} closed forms, the proof is complete.
  \end{proof}

\end{appendix}
\bibliographystyle{alpha}
\bibliography{Curvature_submission.bib}
\end{document}